\newcommand{\intO} {\int_\Omega} 
\newcommand{\ve} {\mathbf{v}} 
\newcommand{\dd} {\,\mathrm{d}}
\newcommand{\dyw}{\mathrm{div}\,}
\newcommand{\rot}{\mathrm{curl}\,} 
\newcommand{\essinf}{\mathrm{ess\, inf}\,}
\newcommand{\esssup}{\mathrm{ess\, sup}\,}
\newcommand{\fr}{\mathbf{F}} 
\newcommand{\vfr}{\mathbf{G}} 
\newcommand{\sty}{\mathbf{\boldsymbol \tau}}
\newcommand{\norm}{\mathbf{n}}
\newcommand{\we}{\mathbf{w}}
\newcommand{\brzeg}{{\partial \Omega}} 
\newcommand{\slabo}{\rightharpoonup}
\newtheorem{tw}{Theorem}[section] 
\newtheorem{defn}[tw]{Definition}
\newtheorem{stw}[tw]{Proposition}
\newtheorem{uwaga}[tw]{Remark} 
\newtheorem{lemat}[tw]{Lemma}
\newtheorem{wniosek}[tw]{Corollary}
\numberwithin{equation}{section}
\title{On steady solutions to vacuumless \\Newtonian models of compressible flow} 
\author{Micha{\l} {\L}asica \\
\small Institute of Applied Mathematics and Mechanics, University of Warsaw \\ 
\small Banacha 2, 02-097 Warszawa, Poland \\
\small \tt lasica@mimuw.edu.pl } 
\date{}
\begin{document}
\maketitle
\begin{abstract} 
We prove the existence of weak solutions to the steady compressible Navier-Stokes system in the barotropic case for a class of pressure laws singular at vacuum. We consider the problem in a bounded domain in $\mathbb R^2$ with slip boundary conditions. Due to appropriate construction of approximate solutions used in proof, obtained density is bounded away from $0$ (and infinity). Owing to a classical result by P.-L. Lions, this implies that density and gradient of velocity are at least H\" older continuous, which does not generally hold for the classical isentropic model in the presence of vacuum. 
\end{abstract} 
\smallskip
\begin{center}
MSC: 35Q30, 76N10  
\end{center}
\smallskip
Keywords: steady compressible Navier–Stokes equations, slip boundary conditions, weak solutions, non-classical pressure laws, vacuum, regularity

\section{Introduction} 
We consider the Navier-Stokes system of equations describing stationary flow of a compressible fluid. In this paper we restrict ourselves to the case of flow in a bounded domain $\Omega$ in $\mathbb R^2$. The state of fluid is described by vector field $\ve \colon \Omega \to \mathbb{R}^2$ representing fluid velocity and scalar fields $\varrho \colon \Omega \to \mathbb{R}_+$, $\pi \colon \Omega \to \mathbb{R}$ corresponding to density and pressure, respectively. These quantities satisfy local laws of conservation of momentum and mass
\begin{equation}
\dyw (\varrho \ve \otimes \ve) = \dyw \mathbb T +\varrho \fr + \vfr, 
\label{ped} 
\end{equation}
\begin{equation} 
\dyw(\varrho \ve) = 0 \qquad \text{in } \Omega,  
\label{masa} 
\end{equation} 
where $\fr, \vfr \colon \Omega \to \mathbb{R}^2$ represent body forces per unit mass and unit volume respectively and $\mathbb T$ is the stress tensor. In the Newtonian case, which is our interest here, $\mathbb T = \mathbb{T}(\ve, \pi) = 2 \mu \mathbb{D}(\ve) + (\nu \dyw \ve - \pi) \mathbb{I} = \mu (\nabla \ve + \nabla \ve^T) + (\nu \dyw \ve - \pi) \mathbb{I}$, where $\mu$ and $\nu$ are the Lam\'e constants describing viscosity of the fluid. 

In order to obtain a well-posed problem, additional requirements on solutions (corresponding to characterisation of thermodynamical properties of the fluid) should be provided. We restrict ourselves to the barotropic regime, where pressure is a given function of density. Physically, this corresponds to disregarding heat conduction in the fluid. In particular, we study the case of general pressure law singular at vacuum, i.\,e. 
\begin{equation} 
\lim_{\varrho \to 0^+} \pi(\varrho) =- \infty .
\label{spadek} 
\end{equation} 
The paper may be seen as an answer to the question of required behaviour of $\pi$ near vacuum that would guarantee boundedness of the density away from $0$. As we will see, any (otherwise admissible) pressure law satisfying \eqref{spadek} will suffice. We note here that pressure laws of this form (given by negative-power law near vacuum) were used by Bresch and Desjardins \cite{bd}, who proved stability for non-stationary Navier-Stokes-Fourier system with pointwise energy balance (with viscosity coefficients vanishing at vacuum), and also by Zatorska \cite{ika}. Such form of pressure law may be seen (see \cite{bd}) as a way to retain properties of a medium whose temperature tends to $0$ (which also in the isentropic regime needs to be the case when density approaches $0$). Let us now discuss briefly some aspects of the state of theory of compressible flow in order to justify dealing with such model. 

\subsection{Discussion of the theory}

The stationary compressible Navier-Stokes system, describing steady flow of Newtonian gas, should not be considered a special case of the evolutionary system. In fact, it presents some specific mathematical difficulties and was investigated by many authors for its own sake. 

First mathematical results for the steady system were in the barotropic regime by Lions \cite{lions}. He proved the existence in bounded domains for any smooth (on the closed half-line $[0, \infty[$) monotone pressure law $\pi = \pi(\varrho)$ behaving near infinity like $\varrho^\gamma$ with $\gamma > {5 \over 3}$ in $\mathbb R^3$ or with $\gamma > 1$ in $\mathbb R^2$. This result was later repeatedly strengthened by considering lower, more physically relevant values of $\gamma$. One of latest works in this direction was by Frehse, Steinhauer and Weigant \cite{fsw}, who obtained estimates that allowed to treat the case $\gamma > \frac{4}{3}$ in 3D. Then, most recently, Jiang and Zhou proved the existence for $\gamma > 1$ \cite{jz} in 3D with periodic boundary conditions, which was later repeated by Novotn\' y and Jessle for Dirichlet boundary conditions. The heat conducting case was considered by Novotn\' y and Pokorn\' y, separately in 3D \cite{np1, np3} and 2D \cite{np2, np4}. However, the case of 
pressure singular at vacuum was not treated until now. As for regularity, we note that $L^\infty$ estimates for $\varrho$ are known in some cases, including the barotropic case in 2D \cite{lions}, see also \cite{mp}. However, in the presence of vacuum, the density (and also the gradient of velocity) may be discontinuous even for smooth data (at least assuming $\vfr \neq 0$), as shown by an example due to Lions. On the other hand, it was also proved by Lions that if $\varrho$ is bounded away from $0$, $\varrho$ and $\nabla \ve$ need to be at least H\" older continuous. 

One of motivations to study the steady Navier-Stokes system is that it has very similar structure and properties to the system obtained by time-discretisation of the non-stationary (evolutionary) Navier-Stokes equations \cite{lions} (see also \cite{semi}). The existence theory of solutions to the equations of evolutionary flow is quite advanced. Here, we only mention the seminal work of Lions \cite{lions} and the paper \cite{fnp} by Feireisl, Novotn\' y and Petzeltova, where the existence of weak solutions is proved in the barotropic case with $\pi(\varrho) = \varrho^\gamma$, $\gamma > \frac{3}{2}$. However, not much is known about regularity of solutions to the evolutionary system. Global existence of strong/classical solutions is only known in 1D (see e.\,g.\,\cite{ks, wat, lw}). In 2D there is only a result by Kazhikhov and Weigant \cite{vk}, where the existence of classical solutions is proved assuming that $\nu = \nu(\varrho) = \varrho^3$. We note that in \cite{vk} (as in \cite{ks} etc.) it is assumed 
that the initial density is bounded away from $0$. Lack of this condition may cause, generally speaking, ill-posedness of the problem (see e.\,g.\;\cite{failure}). 

There are however some conditional regularity results. In particular, a recent result by Sun et al. \cite{sun2d, sun3d} implies that the solutions to the evolutionary compressible Navier-Stokes system are regular as long as the density is bounded in $L^\infty$. Such a bound is as for now known only in some very specific cases, e.\,g.\;\cite{fprs}. That is why time-discretisation, with guaranteed $L^\infty$ bound for a single stationary equation, may be useful in investigating the evolutionary system. As we noticed, for solutions to the stationary system to be regular, they should not admit vacuum. We stress here, that even if there are stationary solutions for given external forces which exhibit vacuum, it does not imply that vacuum states will be created in the evolution from vacuumless initial data. In fact, it is prohibited in some cases \cite{hs, dz, xy}. 

Finally, we mention a connection with a noted result by Seregin and {\v{S}}ver{\'a}k \cite{ss} for incompressible 3D Navier-Stokes system. The incompressible system does (in principle) allow negative infinite pressure, in contrast to standard compressible models. The result \cite{ss} implies that as long as pressure is bounded from below, the solutions to evolutionary problem are regular. Here, we allow the barotropic pressure to attain $- \infty$ and show that (in the steady 2D case) it will always be bounded from below. 

\subsection{Statement of results}
Let us now state our results precisely. We make standard assumptions that 
\begin{equation} 
\pi  \text{ is non-decreasing and belongs to } C^1(\mathbb R_+) 
\label{cisnienie}
\end{equation}
and that growth condition 
\begin{equation} 
\pi(\varrho)\geq a \varrho^\gamma  \quad \text{for all } \varrho \geq \varrho_1
\label{wzrost}
\end{equation}
is satisfied with given $\gamma >1$, $a > 0$, $\varrho_1 > 0$. As the pressure is defined up to a constant, we may assume that $\pi(\varrho_0) =0$ for some $\varrho_0 > 0$. In particular, due to \eqref{spadek} and \eqref{wzrost} we may (and will) choose $\varrho_0\leq {h \over 4}$ so that $\pi(\varrho) \geq a(\varrho^\gamma - \varrho_0^\gamma) \quad \text{for all } \varrho \geq \varrho_0$. Further, we define $\pi_\pm = \max(\pm \pi, 0)$ so that $\pi = \pi_+ - \pi_-$ and 
\begin{equation}
\pi_+(\varrho) \geq a(\varrho^\gamma - \varrho_0^\gamma) \quad \text{for all } \varrho \in \mathbb R_+. 
\label{wzrost2}
\end{equation}

We only treat the case of slip boundary conditions, i.\,e. 
\begin{equation}
\ve \cdot \norm = 0 \qquad \text{on } \partial \Omega, 
\label{sciany}
\end{equation} 
\begin{equation} 
\norm \cdot \mathbb{T}(\ve, \pi) \cdot \sty + f\ve \cdot \sty = 0 \qquad \text{on } \partial \Omega, 
\label{slizg}
\end{equation}
where $\norm$ and $\sty$ are vectors normal and tangent to the boundary respectively and $f$ is the coefficient of friction. The form of tensor $\mathbb T$ implies that $\norm \cdot \mathbb{T}(\ve, \pi) \cdot \sty = 2 \mu\, \norm \cdot \mathbb{D}(\ve) \cdot \sty$ (in particular, pressure $\pi$ does not actually appear in the formula \eqref{slizg}). 

We also assume that finite total mass $M$ (or mean density $h = {M \over |\Omega|}$) of the fluid is prescribed, i.\,e. 
\begin{equation} 
\intO \varrho \dd x = M = h |\Omega| > 0 .
\label{h}
\end{equation}

The following notion of weak solutions to (\ref{ped}-\ref{slizg}) is introduced. 
\begin{defn}
We call a pair $(\varrho, \ve) \in L^1(\Omega) \times W^{1,2} (\Omega)$ satisfying conditions $\pi_+(\rho), \pi_-(\rho) \in L^1(\Omega)$, $\ve \cdot \norm = 0$ on $\partial \Omega$ in the trace sense a weak solution to (\ref{ped}-\ref{slizg}) if 
$$\intO \varrho \ve \cdot \nabla \eta \dd x = 0 \quad \text{ for any }\eta \in C^{\infty} (\overline{\Omega}),$$
\begin{multline*} 
- \intO \varrho \ve \otimes \ve \colon \nabla \mathbf{\boldsymbol \varphi} \dd x + 2 \mu \intO \mathbb{D}(\ve) \colon \mathbb{D}(\mathbf{\boldsymbol \varphi}) \dd x + \nu \intO \dyw \ve \dyw \mathbf{\boldsymbol \varphi} \dd x \\ + \int_{\partial \Omega} f (\ve \cdot \sty) (\mathbf{\boldsymbol \varphi} \cdot \sty) \dd S - \intO \pi (\varrho) \dyw \mathbf{\boldsymbol \varphi} \dd x  = \intO (\varrho \fr + \vfr) \cdot \mathbf{\boldsymbol \varphi} \dd x \\ \quad \text{ for any } \mathbf{\boldsymbol \varphi} \in C^{\infty} (\overline{\Omega})^2 \text{ s.\,t. } \mathbf{\boldsymbol \varphi} \cdot \norm =0 \text{ on } \partial \Omega.
\end{multline*} 
\label{thedef}
\end{defn}
For the above definition of weak solutions it is possible to show existence. 
\begin{tw} 
Let $\Omega \in \mathbb{R}^2$ be a bounded domain with boundary of class $C^2$. Let $\mu >0$, $2 \mu + 3\nu >0$, $f \geq 0$, $\fr, \vfr \in L^\infty(\Omega)$, $h>0$ and $\pi$ is as in (\ref{spadek}-\ref{wzrost}). Then, there exists a weak solution $(\varrho, \ve)$ to (\ref{ped}-\ref{slizg}) satisfying conditions \eqref{h} and 
$$\ve \in W^{1,p}(\Omega) \qquad \text{ for all } 1 \leq p < \infty,$$
$${1 \over n_0} \leq \varrho \leq m_0 \qquad \text{almost everywhere in }\Omega$$ 
where ${1 \over n_0} \in \pi^{-1}(- \|G\|_\infty)$, $m_0 \in \pi^{-1}(\|G\|_\infty)$ and $G$ is defined by \eqref{defG}. 
\label{thetw}
\end{tw}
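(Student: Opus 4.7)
The plan is to construct approximate solutions by a multi-level regularization scheme, derive uniform two-sided bounds for the density—most crucially the bound away from vacuum—and then pass to the limit with the standard Lions weak-compactness machinery. The upper and lower density bounds will arise from a pointwise control of the effective viscous flux, and this is where the two-dimensional nature of the problem will enter decisively.

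At the approximate level I would replace $\pi$ by a regularization $\pi_{\delta,\varepsilon}(\varrho) = \pi(\max(\varrho,\delta)) + \varepsilon \varrho^\beta$ with $\beta$ large, so that the regularized pressure is bounded near vacuum and has polynomial growth at infinity, and simultaneously replace the mass equation by an elliptic regularization
$$\varepsilon(\varrho - h) + \dyw(\varrho \ve) - \varepsilon \Delta \varrho = 0 \qquad \text{in } \Omega,$$
with homogeneous Neumann data for $\varrho$, coupled with a Galerkin or truncation approximation of momentum. Existence at this level is standard, via topological degree in the spirit of \cite{lions, np1}. The energy identity, obtained by testing momentum with $\ve$ and using Korn's inequality (valid under \eqref{sciany} for generic $\Omega$, else via the friction term in \eqref{slizg}), controls $\|\nabla \ve\|_{L^2}$, and a Bogovski-type test function solving $\dyw \boldsymbol \varphi \sim \varrho^\theta$ supplies improved integrability of $\pi(\varrho)$.

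The crux is the pointwise estimate on $\pi(\varrho)$. Taking the divergence of the momentum equation yields
$$-\Delta\bigl[\pi(\varrho) - (2\mu + \nu)\dyw \ve\bigr] = \dyw \dyw (\varrho \ve \otimes \ve) - \dyw(\varrho \fr + \vfr),$$
which, together with the slip boundary conditions, identifies the effective viscous flux $G := \pi(\varrho) - (2\mu + \nu) \dyw \ve$ as the solution of an elliptic problem whose right-hand side is in divergence form. In two dimensions, $W^{2,p}$-theory for the Lam\'e system combined with the embedding $W^{2,p} \hookrightarrow L^\infty$ for $p > 2$ lets one estimate $\|G\|_\infty$ in terms of $\|\fr\|_\infty$, $\|\vfr\|_\infty$, and lower-order norms of $\varrho \ve \otimes \ve$ furnished by the previous step. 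At a point $x_*$ where $\varrho$ attains its maximum, $\nabla \varrho(x_*) = 0$ and $\Delta \varrho(x_*) \leq 0$, so the regularized continuity equation forces $\dyw \ve(x_*) \leq 0$; hence $\pi(\varrho(x_*)) = G(x_*) + (2\mu+\nu)\dyw \ve(x_*) \leq \|G\|_\infty$. The symmetric argument at a point of minimum of $\varrho$ gives $\pi(\varrho) \geq -\|G\|_\infty$. Monotonicity of $\pi$ then yields $1/n_0 \leq \varrho \leq m_0$ pointwise, uniformly in $\delta, \varepsilon$.

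With these uniform bounds, $\pi_-(\varrho)$ is uniformly bounded, so the singularity at vacuum is neutralised, and the limit $\delta, \varepsilon \to 0$ can be performed by the standard Lions compactness argument (effective-viscous-flux identity, renormalized continuity). The regularity $\ve \in W^{1,p}$ for all finite $p$ then follows by elliptic bootstrap on the Lam\'e system, since $\pi(\varrho)$ and $\varrho \ve \otimes \ve$ are bounded. The main obstacle is the pointwise estimate: the maximum-principle step at extrema of $\varrho$ must be made rigorous (e.g., by $L^p$-testing an appropriate renormalization and sending $p \to \infty$), the constants in the $L^\infty$ bound for $G$ have to be genuinely uniform in $\delta, \varepsilon$, and the argument relies essentially on two-dimensionality through $W^{2,p} \hookrightarrow L^\infty$, so there is no direct analogue in 3D.
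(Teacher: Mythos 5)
Your plan captures the central physical mechanism correctly — the singularity of $\pi$ at vacuum combined with a pointwise bound on the effective viscous flux $G = \pi(\varrho) - (2\mu+\nu)\dyw\ve$ forces the density to stay in a compact subinterval of $(0,\infty)$ — and the elliptic regularization of the continuity equation plus the two-dimensionality are used in the paper exactly as you describe. But there is a genuine gap in the approximation scheme that you do not resolve: the estimate for $\|G\|_\infty$ and the $L^\infty$ bound on $\varrho$ are \emph{circular}. Your regularization $\pi_{\delta,\varepsilon}(\varrho) = \pi(\max(\varrho,\delta)) + \varepsilon\varrho^\beta$ does not give any a priori $L^\infty$ bound on the approximate density from above or below, hence no $L^\infty$ bound on the approximate pressure; but the Lam\'e/vorticity bootstrap that furnishes $\|G\|_\infty$ needs exactly a uniform $L^\infty$ bound on the pressure and the density in the convective term $\varrho\ve\otimes\ve$. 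You flag this uniformity as ``a main obstacle'' but do not propose a way around it. The paper's key device, which your scheme lacks, is a hard cutoff $K(\varrho)$ multiplying both the density in the convective and force terms and the pressure derivative in the definition of the approximate pressure $P$ (see \eqref{aprp}--\eqref{aprmasa}). Combined with the damping term $\varepsilon(\varrho-h)$, testing the approximate continuity equation with $((\varrho-m_2)_+ + \delta)^\eta$ and its analogue near zero forces $\tfrac{1}{n_2}\leq\varrho\leq m_2$ pointwise, uniformly in $\varepsilon$, \emph{before} any estimate on $G$ is made. This breaks the circle: $\|G\|_\infty$ is then bounded (Lemma~\ref{ogr}) by a power of $\pi_+(m_2)+\pi_-(\tfrac{1}{n_2})$ with exponent strictly less than one, and a self-consistent choice of $n_2,m_2$ closes the argument (Proposition~\ref{odciecie}).

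The maximum-principle step is also more delicate than you present. The approximate density is only $W^{2,p}$, not $C^2$, so evaluating $\Delta\varrho$ at an extremum is not available; the paper makes this rigorous by testing the approximate continuity equation with $M^l(\tfrac{1}{\varrho_\varepsilon})$ and $M^l(\varrho_\varepsilon)$ for a decreasing cutoff $M$ and letting $l\to\infty$ by dominated convergence, which is the integral version of the renormalization you sketch. Crucially, this argument is performed at the level of approximate solutions and then passed to the limit via the $L^2$ convergence $G_{\varepsilon_n}\to\overline G$ (Lemma~\ref{zbieznoscg}), tracking error terms $B_\varepsilon$ from the regularization. (The paper explicitly warns, before Proposition~\ref{odciecieapriori}, that the a priori version of this argument presupposes differentiability of $\varrho$ that is never actually established, and that the rigorous version must be built into the approximation.) Finally, taking the divergence of the momentum equation to obtain an elliptic problem for $G$, as you suggest, leaves the boundary conditions unresolved; the paper instead writes $\nabla G = \mu\nabla^\perp\omega + \cdots$ via the decomposition $\Delta\ve = \nabla\dyw\ve - \nabla^\perp\rot\ve$ and exploits that the slip conditions give $\omega$ a Dirichlet boundary value \eqref{omegaapr2}, which is what actually yields $\|G\|_{W^{1,q}}$ and hence $\|G\|_\infty$ via Morrey's inequality.
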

The content of this paper is essentially the proof of Theorem \ref{thetw}. Difficulties involved in the proof include control of some norm of negative part of pressure and construction of appropriate approximation. The second of these issues may be dealt with using the framework proposed by Mucha and Pokorn{\'y} in \cite{mp}, which involves control of $L^\infty$ norm of density imposed by approximate continuity equation. It may be adapted to force boundedness of ${1 \over \varrho}$, i.\,e.\;the condition 
\begin{equation} 
\essinf \varrho >0 .  
\label{essinf} 
\end{equation}
The method, relying on slip boundary conditions (\ref{sciany}, \ref{slizg}), also yields regularity of vorticity
\begin{equation} 
\omega = \rot \ve,  
\label{defomega}
\end{equation} 
and effective viscous flux
\begin{equation} 
G= -(2 \mu + \nu) \dyw \ve + \pi  
\label{defG}
\end{equation} 
in $W^{1,p}(\Omega)$ for any $p<\infty$. This implies that obtained solutions satisfy the following theorem of Lions (provided that $\pi'>0$). 

\begin{tw}[Lions]
Assume that $\fr, \vfr$ are sufficiently regular (e.\,g.\;$C^\infty$). Let $(\varrho, \ve)$ be a weak solution to the system (\ref{ped}-\ref{slizg}) with constitutive equation $\pi = \pi(\varrho)$, where $\pi \in C^1([0, \infty [)$ is non-decreasing, such that $\varrho \in L^\infty(\Omega)$, $\ve, \omega, G \in W^{1,p}(\Omega)$ for all $1 \leq p < \infty$. If condition \eqref{essinf} and  
$$\alpha := \inf \{\pi '(t) t \colon \essinf \varrho \leq t \leq \esssup \varrho \} >0$$
are satisfied, then $\varrho \in C(\Omega)$, $\ve \in C^1(\Omega)$. Furthermore, denoting 
$$\kappa = {\alpha \over 2 \mu + \nu}\|\nabla \ve \|_\infty^{-1},$$
$k = [\kappa]$ (i.\,e.\;the integral part of $\kappa$), $\vartheta = k - \kappa$, there holds
$$\varrho \in C^{k, \vartheta}(\Omega), \quad \ve, \omega, G \in C^{k+1, \vartheta}(\Omega) \quad \text{if } 0 < \vartheta < 1,$$
$$\varrho \in C^{k-1, \eta}(\Omega), \quad \ve, \omega, G \in C^{k, \eta}(\Omega) \quad \text{ for all }\, 0 < \eta < 1 \quad \text{if } \vartheta =1.$$ 
\label{regularnosc}
\end{tw}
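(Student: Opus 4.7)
The plan is to view the continuity equation as a transport equation for $\varrho$ driven by $G$. Combining $\dyw(\varrho \ve) = \ve \cdot \nabla \varrho + \varrho \dyw \ve = 0$ with the identity $\dyw \ve = (\pi(\varrho) - G)/(2\mu + \nu)$ read off from \eqref{defG}, one arrives at the scalar first-order PDE
$$ \ve \cdot \nabla \varrho + \tfrac{1}{2\mu+\nu}\,\varrho \pi(\varrho) = \tfrac{1}{2\mu+\nu}\,\varrho G. $$
Since $\ve \in W^{1,p}(\Omega)$ for every $p<\infty$, Morrey's embedding yields H\"older continuity of $\ve$ of any exponent below $1$; coupled with the slip condition $\ve \cdot \norm = 0$ on $\brzeg$, this is enough to define a Lagrangian flow $X(\cdot;x)$ that keeps $\overline\Omega$ invariant, along which $\varrho$ obeys the ODE
$$ \tfrac{d}{dt}\varrho(X(t;x)) = \tfrac{1}{2\mu+\nu}\,\varrho\bigl(G(X(t;x)) - \pi(\varrho)\bigr), $$
which is dissipative by monotonicity of $\pi$.

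The first step is to upgrade the given Sobolev regularity to $\varrho \in C(\Omega)$ and $\ve \in C^1(\Omega)$. Since $G$ is continuous (by $W^{1,p}\hookrightarrow C$ for $p>2$) and the ODE above has unique, continuously dependent solutions thanks to the Lipschitz bound on its right-hand side on $[\essinf \varrho,\, \esssup \varrho]$, the density is continuous on $\Omega$. Feeding this back gives $\dyw \ve \in C(\Omega)$; together with $\omega \in W^{1,p} \hookrightarrow C(\Omega)$ and the elliptic recovery of $\nabla \ve$ from $\rot\ve$, $\dyw\ve$ subject to $\ve\cdot\norm = 0$, one obtains $\ve \in C^1(\Omega)$, so $\|\nabla \ve\|_\infty<\infty$ and $\kappa$ is well defined.

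The core step is the H\"older estimate, whose exponent emerges from comparing the rate at which two characteristics separate with the dissipation rate of the ODE. For $x_1, x_2$ close, Gr\"onwall on the flow gives $|X(t;x_1)-X(t;x_2)| \leq |x_1-x_2|\,e^{\|\nabla \ve\|_\infty |t|}$. Subtracting the ODEs at the two trajectories and applying the mean value theorem to $\varrho \pi(\varrho)$ together with the hypothesis $\pi'(s) s \geq \alpha$, one derives, setting $r(t):=\varrho(X(t;x_1))-\varrho(X(t;x_2))$,
$$ \tfrac{d}{dt}|r(t)| + \tfrac{\alpha}{2\mu+\nu}|r(t)| \leq C |X(t;x_1)-X(t;x_2)|. $$
Tracing the trajectories backward to a reference time at which $|r|$ is controlled by $2\|\varrho\|_\infty$, and optimising over the tracing length, yields $|r(0)| \leq C|x_1-x_2|^{\kappa}$, so that $\varrho \in C^{0,\kappa}(\Omega)$ whenever $0 < \kappa < 1$.

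The proof is completed by a Schauder bootstrap. Once $\varrho \in C^{0,\vartheta}$ is secured, the momentum equation \eqref{ped} — or equivalently the elliptic systems for $\omega$ and $G$ obtained by taking the curl and the divergence — has H\"older coefficients and right-hand side, yielding $\ve \in C^{2,\vartheta}$ and $\omega, G \in C^{1,\vartheta}$; differentiating the transport ODE then promotes $\varrho$ to $C^{1,\vartheta}$. Iterating this $k=[\kappa]$ times produces the advertised regularity; at integer $\kappa$ the rate balance saturates, costing one order of differentiability and an arbitrarily small amount of H\"older exponent, whence the $\vartheta = 1$ alternative. The principal obstacle I expect is the rate comparison in the third step — in particular, showing that it closes uniformly in the base point despite possible stagnation points of $\ve$ and trajectories tangent to $\brzeg$, where the slip condition \eqref{sciany} is indispensable both for keeping characteristics inside $\overline\Omega$ and for the compatibility of the elliptic gains with the boundary data during the bootstrap.
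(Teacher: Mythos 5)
The paper does not prove Theorem~\ref{regularnosc}: it is stated with the attribution ``Lions'' and cited from \cite{lions} as a known result that the constructed solutions happen to satisfy. So there is no in-paper proof to compare against; I can only assess your proposal on its own.

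Your outline captures the right mechanism (transport equation with monotone dissipation compared along characteristics against the Gr\"onwall stretching rate $\|\nabla\ve\|_\infty$, followed by a Schauder bootstrap), but the central differential inequality you assert does not follow from the equation as you wrote it. From $\ve\cdot\nabla\varrho = -\varrho\,\dyw\ve$ and $\dyw\ve = (\pi(\varrho)-G)/(2\mu+\nu)$ you obtain $\dot\varrho = \tfrac{1}{2\mu+\nu}\varrho\bigl(G-\pi(\varrho)\bigr)$ along a characteristic. Linearizing the right-hand side in $\varrho$ produces the coefficient
$\tfrac{1}{2\mu+\nu}\bigl(G-\pi(\varrho)\bigr) - \tfrac{1}{2\mu+\nu}\varrho\pi'(\varrho) = -\dyw\ve - \tfrac{1}{2\mu+\nu}\varrho\pi'(\varrho)$,
so the dissipation rate for $r=\varrho(X_1)-\varrho(X_2)$ is $\tfrac{\alpha}{2\mu+\nu}+\dyw\ve$, not $\tfrac{\alpha}{2\mu+\nu}$; where $\dyw\ve<0$ you lose the advertised exponent. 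Moreover the mean-value coefficient $\varrho_1\pi'(\xi)$ with $\xi$ between $\varrho_1,\varrho_2$ is only bounded below by $\alpha\,\essinf\varrho/\esssup\varrho$, again degrading $\kappa$. The standard fix, and presumably what Lions does, is to work with a renormalization $b(\varrho)$ with $\varrho\,b'(\varrho)=1$, e.\,g.\;$s=\log\varrho$, for which $\dot s = -\dyw\ve = \tfrac{1}{2\mu+\nu}\bigl(G-\tilde\pi(s)\bigr)$ with $\tilde\pi(s)=\pi(e^s)$ and $\tilde\pi'(s)=\pi'(\varrho)\varrho\ge\alpha$; then the subtracted ODE for $s_1-s_2$ has exactly the clean dissipation rate $\alpha/(2\mu+\nu)$, and since $\essinf\varrho>0$ and $\esssup\varrho<\infty$ the H\"older seminorms of $\log\varrho$ and $\varrho$ are comparable. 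A secondary point: before you have bootstrapped $\ve$ to $C^1$, the hypothesis gives only $\ve\in\bigcap_{p<\infty}W^{1,p}\hookrightarrow C^{0,\beta}$ ($\beta<1$), which is not Lipschitz, so defining the flow $X(t;x)$ and the Gr\"onwall separation bound requires DiPerna--Lions flow theory (or an Eulerian restatement of the estimate) rather than the classical Picard theorem, and this should be acknowledged explicitly.
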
 

Finally, we mention that the method developed by Mucha and Pokorn{\'y} was also used by them in more complex cases, e.\,g.\;bounded domain in $\mathbb R^3$ with $\gamma \geq 3$ in \cite{mp2}. It should be expected that an analog of Theorem $\thetw$ would also hold in these settings.

The rest of the paper is organised as follows. In the following subsection we give a priori versions of essential estimates of the paper. In section 2, approximate version of the system (\ref{ped}-\ref{cisnienie}), analogous to the one proposed in \cite{mp}, is introduced and the existence of approximate solutions is shown. Next, we show energy estimate independent of parameters of approximation and estimates depending in controlled way on parameters $n_2$ and $m_2$ (forced bounds of approximate $\varrho$). In section 3 we extract convergent sequence $(\varrho_n, \ve_n)$ of approximate solutions and prove that its limit is in fact a weak solution to (\ref{ped}-\ref{slizg}). The key result is that for suitably chosen $(\varrho_n, \ve_n)$
\begin{equation} 
\lim_{n \to \infty} |\{ x \in \Omega \colon \varrho_n(x) \notin [\tfrac{1}{n_0}, m_0]\}| = 0
\label{granicamiary}
\end{equation} 
and $n_2$, $m_2$ may be sufficiently greater than $n_0$ and $m_0$ respectively. 

\subsection{A priori estimates} 
This subsection contains versions of Propositions \ref{apriori} and \ref{odciecie} obtained with assumption on existence of sufficiently regular solutions to (\ref{ped}-\ref{slizg}). Propositions \ref{apriori} and \ref{odciecie} are key parts of the proof of Theorem $\thetw$. Their proofs are ridden with technicalities associated with the form of used approximation. In the a priori regime the ideas behind the estimates are much clearer. However, we need to point out that Proposition \ref{odciecieapriori} is not an a priori estimate in the exact sense, as we ultimately do not prove that the density is necessarily differentiable in any sense, which is assumed in the proof. Therefore to prove the result for actual solutions we will need to act carefully on the level of approximation. 

Here and in the whole paper $C$ will denote any positive constant that may depend on the data, i.\,e.\;$\Omega, \fr, \vfr, h, \mu, \nu, f, \pi$, and may vary on the same page or in the same line. 

\begin{stw}[Energy estimate]
Let $(\varrho, \ve)$ be a regular solution to (\ref{ped}-\ref{slizg}) satisfying \eqref{h}. If $f=0$ and $\Omega$ is symmetric with respect to some $x_0 \in \mathbb R^2$ we assume that $\intO (x-x_0) \times \ve \dd x = 0$. Then
\begin{equation} 
\|\ve\|_{1, 2} + \|\pi_+(\varrho)\|_2 + \|\pi_-(\varrho)\|_2 \leq C.
\end{equation} 
\label{energ}
\end{stw}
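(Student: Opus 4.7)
I would begin by testing the weak momentum equation from Definition \ref{thedef} with $\boldsymbol\varphi = \ve$, which is admissible since $\ve \cdot \norm = 0$ on $\partial\Omega$. Assuming the regularity granted by hypothesis, the convective contribution $-\intO \varrho\ve\otimes\ve:\nabla\ve\dd x$ vanishes after integration by parts, using $\dyw(\varrho\ve)=0$ together with $\ve\cdot\norm=0$. For the pressure term I introduce the pressure potential $\Pi$ solving $\varrho\Pi'(\varrho) - \Pi(\varrho) = \pi(\varrho)$, explicitly $\Pi(\varrho) = \varrho \int_{\varrho_0}^{\varrho} \pi(s)/s^2 \dd s$, which is well defined wherever $\varrho$ is bounded away from zero. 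Multiplying the continuity equation by $\Pi'(\varrho)$ and integrating by parts (once more using $\ve\cdot\norm = 0$) yields $\intO \pi(\varrho)\dyw\ve\dd x = 0$. The slip boundary condition \eqref{slizg} reduces the surviving surface integral to $-\int_{\partial\Omega}f|\ve\cdot\sty|^2\dd S$. Collecting all the terms:
\begin{equation*}
2\mu\|\mathbb{D}(\ve)\|_2^2 + \nu \|\dyw\ve\|_2^2 + \int_{\partial\Omega} f |\ve\cdot\sty|^2 \dd S = \intO (\varrho\fr + \vfr)\cdot \ve \dd x.
\end{equation*}

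\textbf{Controlling $\ve$.} Korn's inequality with slip data gives $\|\ve\|_{1,2}^2 \leq C\bigl(\|\mathbb{D}(\ve)\|_2^2 + \int_{\partial\Omega}f|\ve\cdot\sty|^2\dd S\bigr)$, provided the kernel of rigid rotations is eliminated; the friction integral takes care of this when $f>0$, and the assumption $\intO(x-x_0)\times\ve\dd x=0$ in the exceptional case $f=0$ with $\Omega$ symmetric disposes of the last degree of freedom. The right-hand side of the energy identity I would bound by H\"older together with $\|\fr\|_\infty,\|\vfr\|_\infty\leq C$; using the 2D embedding $W^{1,2}(\Omega)\hookrightarrow L^q(\Omega)$ for every finite $q$ this produces $\intO(\varrho\fr+\vfr)\cdot\ve\dd x\leq C\|\ve\|_{1,2}(\|\varrho\|_r+1)$ for any fixed $r>1$. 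To close the resulting inequality I still need an $L^r$ bound on $\varrho$ beyond mass conservation \eqref{h}, which must come from a pressure estimate.

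\textbf{Pressure estimate.} The $L^2$ control of $\pi(\varrho)$ comes from a Bogovski\u{\i}-type test: choose $\Phi$ with $\dyw\Phi = \pi(\varrho)-\bar\pi$, $\Phi\cdot\norm = 0$ on $\partial\Omega$ and $\|\Phi\|_{1,2}\leq C\|\pi(\varrho)-\bar\pi\|_2$, where $\bar\pi = \tfrac{1}{|\Omega|}\intO\pi(\varrho)\dd x$. Substituting into the momentum equation and bounding viscous, convective and body-force contributions by $\|\pi(\varrho)-\bar\pi\|_2$ times a polynomial in $\|\ve\|_{1,2}$ and $\|\varrho\|_r$ (via H\"older and 2D Sobolev), Young's inequality and absorption give $\|\pi(\varrho)-\bar\pi\|_2\leq C$. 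A separate bound on $|\bar\pi|$ is then extracted from the mass constraint \eqref{h} and the growth condition \eqref{wzrost}: on $\{\varrho\geq\varrho_0\}$ we control $\pi_+$ from below by $\varrho^\gamma$ via \eqref{wzrost2}, while $\pi_-$ is supported in $\{\varrho<\varrho_0\}$, whose size is already limited by \eqref{h}. By \eqref{wzrost2} the resulting $L^2$ bound on $\pi$ yields an $L^{2\gamma}$ bound on $\varrho$, closing the bootstrap with the velocity estimate. Since $\pi_+$ and $\pi_-$ are supported on the disjoint sets $\{\varrho\geq\varrho_0\}$ and $\{\varrho<\varrho_0\}$, the conclusion $\|\pi_+\|_2+\|\pi_-\|_2\leq C$ follows at once.

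\textbf{Main obstacle.} The principal difficulty lies in the singular regime. For a non-negative pressure one has $\bar\pi\geq 0$ immediately controlled by $\|\pi\|_1\leq C\|\pi\|_2$, and the Bogovski\u{\i} estimate absorbs without difficulty; here, however, $\pi_-$ may be arbitrarily large on $\{\varrho<\varrho_0\}$ and $\bar\pi$ may be sharply negative, so the control of the mean requires the more delicate splitting described above. Moreover, the pressure-potential identity used in the first step requires $\varrho$ to be regular and bounded below, which is precisely what is \emph{not} available for actual (non-regular) approximate solutions; justifying the manipulation in that setting is exactly what motivates the approximation scheme and the careful estimates developed in the rest of the paper.
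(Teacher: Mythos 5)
Your energy identity, the Korn argument with the rigid-rotation kernel, and the overall Bogovski\u{\i}-plus-bootstrap structure all agree with the paper, and the pressure-potential justification of $\intO\pi(\varrho)\dyw\ve\dd x=0$ is fine at the a~priori level of regularity assumed here. The genuine gap lies in the control of the singular part $\pi_-$, which is precisely the point of the proposition. You test with a single Bogovski\u{\i} solve $\dyw\Phi=\pi-\bar\pi$ and then assert that $|\bar\pi|$ is recovered because ``$\pi_-$ is supported in $\{\varrho<\varrho_0\}$, whose size is already limited by \eqref{h}.'' This is false twice over: the constraint $\intO\varrho\dd x=h|\Omega|$ does not bound $|\{\varrho<\varrho_0\}|$ away from $|\Omega|$ (the mass can be carried by an arbitrarily small set where $\varrho$ is large), and even if that measure were bounded, $\pi_-$ is unbounded as $\varrho\to0^+$ by \eqref{spadek}, so a bound on the measure of the supporting set would still give no bound on $\intO\pi_-\dd x$. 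Tracing the consequences: from $\|\pi-\bar\pi\|_2\leq C$ you can bound $\|\pi_+\|_2$, but $\|\pi_-\|_2\leq\|\pi-\bar\pi\|_2+|\bar\pi|\,|\Omega|^{1/2}$ together with $|\bar\pi|\leq\bar\pi_++\bar\pi_-\leq C+|\Omega|^{-1/2}\|\pi_-\|_2$ yields only the tautology $\|\pi_-\|_2\leq C+\|\pi_-\|_2$.

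What the paper does instead is test with $\boldsymbol\psi_+=\mathcal B\bigl(\pi_+(\varrho)-\tfrac1{|\Omega|}\intO\pi_+(\varrho)\dd x\bigr)$ rather than with $\mathcal B(\pi-\bar\pi)$. Since $\pi\pi_+=\pi_+^2$ (disjoint supports), the pressure term produces on the left-hand side of \eqref{testpi+2} the combination $\intO\pi_+^2\dd x+\bar\pi_+\intO\pi_-\dd x$, and the Jensen-type lower bound \eqref{jensen}, $\bar\pi_+\geq a\bigl(h^\gamma-(h/4)^\gamma\bigr)>0$, which follows from the growth condition \eqref{wzrost2} and the prescribed mass, turns $\bar\pi_+\intO\pi_-\dd x$ into an honest control of $\|\pi_-\|_1$. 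Only after that does a second test with $\boldsymbol\psi_-=\mathcal B\bigl(\pi_-(\varrho)-\tfrac1{|\Omega|}\intO\pi_-(\varrho)\dd x\bigr)$ upgrade $\|\pi_-\|_1$ to $\|\pi_-\|_2$, the newly established bound on $\intO\pi_-\dd x$ being exactly what tames the term $\tfrac1{|\Omega|}(\intO\pi_-\dd x)^2$ in \eqref{testpi-}. Your proposal never produces a positively weighted $\intO\pi_-\dd x$ on the good side of the inequality, so the absorption never closes for the singular part of the pressure; this is the missing idea.
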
 
To control the norms $\|\pi_+(\varrho)\|_2$, $\|\pi_-(\varrho)\|_2$ we would like to test the equation \eqref{ped} with vector fields $\boldsymbol \psi_\pm$ such that $\dyw \boldsymbol \psi_\pm = \pi_\pm(\varrho)$. It's convenient to use the Bogovskii operator (see \cite{ns}, Lemma 3.17) to obtain such a field. 

\begin{tw}[Bogovskii operator] 
Let $\Omega \in \mathbb R ^n $ be a bounded domain with Lipschitz boundary, $1 < p < \infty$. There exists a bounded linear operator  
$$\mathcal B \colon \overline{L^p} (\Omega) \to W^{1,p}_0(\Omega)^n,$$ 
where $\overline{L^p} (\Omega) = \{f \in L^p(\Omega) \colon \intO f \dd x = 0 \}$, such that 
$$\dyw \mathcal B (f) = f \text{ a. e. in } \Omega \quad \text{ for all } \, f \in \overline{L^p} (\Omega). $$
\label{bogowski} 
 \end{tw}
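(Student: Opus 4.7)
The plan is to build $\mathcal B$ first on a domain star-shaped with respect to a ball and then pass to general bounded Lipschitz domains by a partition-of-unity argument, using the classical fact that a bounded Lipschitz domain can be written as a finite union of such star-shaped subdomains.

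On a bounded open set $\Omega$ star-shaped with respect to a ball $B \subset \Omega$, fix $\omega \in C_c^\infty(B)$ with $\int \omega = 1$ and define, for $f \in C_c^\infty(\Omega)$ with $\intO f \dd x = 0$,
$$\mathcal B(f)(x) = \intO N(x, x-y) f(y) \dd y, \qquad N(x, z) = \frac{z}{|z|^n} \int_{|z|}^{\infty} \omega\!\left(x - z + r \tfrac{z}{|z|}\right) r^{n-1} \dd r.$$
A direct differentiation under the integral, combined with $\int \omega = 1$ and the zero-mean of $f$, yields $\dyw \mathcal B(f) = f$ and $\supp \mathcal B(f) \subset \overline \Omega$; these verifications are purely algebraic and require only integration by parts.

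The heart of the matter, and where I expect the main technical work to lie, is the bound $\|\nabla \mathcal B(f)\|_p \leq C \|f\|_p$ for $1 < p < \infty$. Formal differentiation decomposes $\partial_i \mathcal B(f)_j$ as a singular integral operator with kernel $K(x, x-y)$, where $z \mapsto K(x, z)$ is homogeneous of degree $-n$, smooth on $\mathbb R^n \setminus \{0\}$ and has zero mean on the unit sphere uniformly in $x$, plus a remainder whose kernel is integrable on $\Omega \times \Omega$. The singular part is controlled by the Calder\'on-Zygmund theorem after verifying the cancellation and H\"ormander-type regularity conditions uniformly in $x$; the remainder is controlled by Young's inequality. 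Combined, they give the required $L^p$ estimate, and $\mathcal B$ extends by density to a bounded operator $\overline{L^p}(\Omega) \to W^{1,p}_0(\Omega)^n$.

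For a general bounded Lipschitz domain, write $\Omega = \bigcup_{k=1}^K \Omega_k$ with each $\Omega_k$ star-shaped with respect to a ball, ordered so that $\Omega_k \cap \bigcup_{j < k} \Omega_j \neq \emptyset$ for $k \geq 2$. Given $f \in \overline{L^p}(\Omega)$ and a subordinate partition of unity $\{\eta_k\}$, decompose $f = \sum_k f_k$ inductively by setting $f_1 = \eta_1 f - c_1 g_1$ and $f_k = \eta_k f + c_{k-1} g_{k-1} - c_k g_k$ for $2 \leq k \leq K-1$, where each $g_k \in C_c^\infty(\Omega_k \cap \Omega_{k+1})$ has unit mass and $c_k$ is chosen so that $\int_{\Omega_k} f_k \dd x = 0$; the global constraint $\intO f \dd x = 0$ forces the final correction $c_K$ to vanish, so $f_K = \eta_K f + c_{K-1} g_{K-1}$ also has zero mean on $\Omega_K$. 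Applying the star-shaped construction to each $f_k$ on $\Omega_k$, extending by zero to $\Omega$, and summing produces the desired operator; the bounds $|c_k| \leq C\|f\|_p$ propagate through the induction and preserve the $W^{1,p}$ estimate.
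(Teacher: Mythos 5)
The paper does not actually prove this theorem: it states it and cites Lemma~3.17 of Novotn\'y--Stra\v{s}kraba's monograph, so there is no ``paper's proof'' to compare against. Your sketch is the standard Bogovskii construction found in that reference (and in Galdi's book), and it is correct in outline: the integral kernel you write down is the Bogovskii kernel (after the change of variables $z = x-y$ it matches the usual formula $\int_\Omega f(y)\,\tfrac{x-y}{|x-y|^n}\int_{|x-y|}^\infty \omega(y + r\tfrac{x-y}{|x-y|})\,r^{n-1}\dd r\dd y$), the identity $\dyw\mathcal B(f)=f$ does reduce to $\int\omega=1$ plus the zero-mean of $f$ after an integration by parts, and the $L^p$ bound on $\nabla\mathcal B(f)$ is indeed the nontrivial part, handled via Calder\'on--Zygmund theory for the homogeneous degree-$(-n)$ singular part plus Young's inequality for the milder remainder. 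The partition-of-unity reduction from a Lipschitz domain to finitely many star-shaped pieces is also standard, and your telescoping choice of corrections $c_k g_k$ correctly ensures each $f_k$ has zero mean on $\Omega_k$ while the global constraint $\intO f=0$ makes the last correction automatic. Two small points to tighten if you wrote this out: (i) your ordering condition $\Omega_k\cap\bigcup_{j<k}\Omega_j\neq\emptyset$ is weaker than what your construction uses, since you place $g_{k-1}$ in $\Omega_{k-1}\cap\Omega_k$; either strengthen the ordering to consecutive overlaps or route the correction through whichever $\Omega_{j(k)}$, $j(k)<k$, actually meets $\Omega_k$; and (ii) for $f\in C_c^\infty(\Omega)$ one in fact gets $\supp\mathcal B(f)$ compactly contained in $\Omega$ (not merely in $\overline\Omega$) because $\Omega$ is star-shaped with respect to a ball compactly contained in it, and that compact support is what lets density give membership in $W^{1,p}_0(\Omega)^n$.
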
 

\begin{proof}[Proof of Proposition \ref{energ}]  
Testing momentum equation \eqref{ped} with $\ve$ we obtain 
\begin{equation} 
\intO 2 \mu |\mathbb D (\ve)|^2 + \nu \dyw^2 \ve \dd x + \int_{\partial \Omega} f (\ve \cdot \boldsymbol \tau)^2 \dd \sigma = \intO (\varrho \fr + \vfr) \cdot \ve \dd x. 
\end{equation} 
Using the H\" older and Sobolev inequalities we obtain an estimate 
\begin{equation}
\intO \varrho \fr \cdot \ve \dd x \leq C(\delta)\|\varrho^\delta \|_{2 \gamma \over \delta} \|\varrho^{1 - \delta} \|_{1 \over 1 - \delta} \|\fr\|_\infty \|\ve\|_{1, 2} \leq C(\delta) (1 + \|\pi_+(\varrho)\|_2^{\delta \over \gamma})\|\ve\|_{1, 2} 
\end{equation} 
and therefore, due to the Korn-Poincar\' e inequality
\begin{equation} 
\|\ve\|_{1, 2} \leq C(\delta) (1 + \|\pi_+(\varrho)\|_2^{\delta \over \gamma})
\label{oszv}
\end{equation} 
for any $0 < \delta < 1$. In the case $f = 0$ this only leads to an estimate on $||\ve ||_{W^{1,2}(\Omega)/R(\Omega)}$, where 
\begin{multline} 
R(\Omega) = \{\mathbf u \in W^{1,2}(\Omega) \colon \mathbb D( \mathbf u) = 0 \text{ in }\Omega, \mathbf u \cdot \norm = 0 \text{ on } \brzeg\} \\ = \{\mathbf u \in W^{1,2}(\Omega) \colon \nabla \mathbf u = A \text{ in } \Omega \text{ for some constant }A \in \mathfrak{so}(2), \mathbf u \cdot \norm = 0 \text{ on } \brzeg\} .
\end{multline} 
Clearly, $R(\Omega) \neq \{ 0 \}$ if and only if $\Omega$ is rotationally symmetric with respect to some $x_0 \in \mathbb R^2$. In this case however, $R(\Omega) \cap \{\mathbf u \in W^{1,2}(\Omega) \colon \intO (x-x_0) \times \mathbf u \dd x = 0 \} = \{0 \}$. 

Let $\boldsymbol \psi_+ = \mathcal{B}(\pi_+(\varrho) - {1 \over |\Omega|} \intO \pi_+(\varrho) \dd x)$. Then $\|\boldsymbol \psi_+\|_{1, 2} \leq C \|\pi_+(\varrho)\|_2$. Testing the momentum equation~\eqref{ped} with $\boldsymbol \psi_+$ we get
\begin{multline} 
 \intO \pi_+(\varrho)^2  \dd x + {1 \over |\Omega|}  \intO \pi_+(\varrho)\dd x \intO \pi_-(\varrho) \dd x =  {1 \over|\Omega|}  \left(\intO \pi_+(\varrho)\dd x\right)^2 +   \intO \pi_+(\varrho)\pi_-(\varrho)\dd x  \\- \intO \varrho \ve \otimes \ve \colon \nabla \boldsymbol \psi_+ \dd x +  \mu \intO \nabla \ve : \nabla \boldsymbol \psi_+ \dd x + (\nu + \mu) \intO \dyw \ve \dyw \boldsymbol \psi_+ \dd x  - \intO (\varrho \fr + \vfr) \cdot \boldsymbol \psi_+ \dd x . 
\label{testpi+2} 
\end{multline}
As we control the mass of the fluid, we may estimate the first expression on the r.\,h.\,s.\;of \eqref{testpi+2} using boundedness of $\Omega$,
\begin{multline} 
{1 \over|\Omega|}\left(\intO \pi_+(\varrho)\dd x\right)^2 = {1 \over|\Omega|}\left(\intO \varrho^{1 \over 2} {\pi_+(\varrho) \over \varrho^{1 \over 2}} \dd x\right)^2 \leq h \intO {\pi_+(\varrho)^2 \over \varrho} \dd x \\= h \int_{\{\varrho < 2 h\}} {\pi_+(\varrho)^2 \over \varrho} \dd x + h \int_{\{\varrho \geq 2 h\}} {\pi_+(\varrho)^2 \over \varrho} \dd x \leq C + {1 \over 2} \intO \pi_+(\varrho)^2 \dd x .
\label{oszmasa}
\end{multline}
Next, due to definition of $\pi_\pm$, $\intO \pi_-(\varrho) \pi_+(\varrho) \dd x = 0$. The rest of the terms in \eqref{testpi+2} are estimated using the H\"older and Sobolev inequalities and \eqref{oszv} as follows 
\begin{equation} 
\left|\intO \varrho \ve \otimes \ve : \nabla \boldsymbol \psi_+ \dd x\right| \leq C \|\ve\|_{1 ,2}^2 \|\varrho\|_{2 \gamma} \|\boldsymbol \psi_+ \|_{1,2} \leq C(\delta) \left(1 + \|\pi_+(\varrho)\|_2^{1+ {1 \over \gamma} + 2{\delta \over \gamma}}\right), 
\label{osztestpi+p}
\end{equation} 
\begin{equation} 
\left| \mu \intO \nabla \ve : \nabla \boldsymbol \psi_+ \dd x + (\nu + \mu) \intO \dyw \ve \dyw \boldsymbol \psi_+ \dd x  \right|\leq C \|\ve\|_{1 ,2}  \|\boldsymbol \psi_+ \|_{1,2} \leq C(\delta)  \left(1 + \|\pi_+(\varrho)\|_2^{1 + {\delta \over \gamma}}\right), 
\end{equation} 
\begin{equation}
\left|\intO (\varrho \fr + \vfr) \cdot \boldsymbol \psi_+ \dd x \right| \leq C (1 + \|\varrho\|_{2 \gamma}) \|\boldsymbol \psi_+ \|_{1,2} \leq C \left(1 + \|\pi_+(\varrho)\|_2^{1+ {1 \over \gamma}}\right).
\label{osztestpi+k}
\end{equation} 
By virtue of growth condition \eqref{wzrost2} and convexity of the power function, 
\begin{equation}
 {1 \over |\Omega|} \intO \pi_+(\rho) \geq {a \over |\Omega|} \intO (\varrho^ \gamma\ - \varrho_0^\gamma)\dd x \geq a (h^\gamma - (\tfrac{h}{4})^\gamma)> 0.  
 \label{jensen}
\end{equation} 
Choosing small enough $\delta$ leads to
\begin{equation} 
\int_\Omega \pi_-(\varrho) \dd x + \intO \pi_+(\varrho)^2 \dd x\leq C(\eta)(1+ \|\pi_+(\varrho)\|_2^\eta) 
\end{equation} 
for some $\eta < 2$ and consequently  
\begin{equation} 
\|\pi_-(\varrho)\|_1 + \|\pi_+(\varrho)\|_2 + \|\ve\|_{1, 2} \leq C. 
\label{osz1/2}
\end{equation} 

Further, take $\boldsymbol \psi_- = \mathcal{B}(\pi_-(\varrho) - {1 \over |\Omega|} \intO \pi_-(\varrho) \dd x)$. Then $\|\boldsymbol \psi_-\|_{1, 2} \leq C \|\pi_-(\varrho)\|_2$. Testing the momentum equation~\eqref{ped} with $\boldsymbol \psi_-$ yields 
\begin{multline} 
 \intO \pi_-(\varrho)^2 \dd x + {1 \over |\Omega|}  \intO \pi_+(\varrho)\dd x \intO \pi_-(\varrho) \dd x =  {1 \over|\Omega|}  \left(\intO \pi_-(\varrho) \dd x\right)^2 +   \intO \pi_-(\varrho)\pi_+(\varrho) \dd x \\ - \intO \varrho \ve \otimes \ve \colon \nabla \boldsymbol \psi_- \dd x +  \mu \intO \nabla \ve : \nabla \boldsymbol \psi_- \dd x + (\nu + \mu) \intO \dyw \ve \dyw \boldsymbol \psi_- \dd x - \intO (\varrho \fr + \vfr) \cdot \boldsymbol \psi_- \dd x . 
\label{testpi-} 
\end{multline}
We estimate the first term of the r.\,h.\,s.\;of \eqref{testpi-} using \eqref{osz1/2} and the rest similarly as in (\ref{osztestpi+p}-\ref{osztestpi+k}), obtaining bound on $\|\pi_-(\varrho)\|_2$ which finishes the proof. 
\end{proof} 

\begin{stw} 
Let $(\varrho, \ve)$ be a regular solution to continuity equation \eqref{masa} satisfying impermeability condition \eqref{sciany}. Then 
$${1 \over n_0} \leq \varrho \leq m_0$$ 
where ${1 \over n_0} \in \pi^{-1}(- \|G\|_\infty)$, $m_0 \in \pi^{-1}(\|G\|_\infty)$ and $G$ is defined by \eqref{defG}. 
\label{odciecieapriori}
\end{stw}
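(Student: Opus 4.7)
The plan is to extract a pointwise ODE along streamlines from the continuity equation, then argue at the extrema of $\varrho$, which exist by regularity.

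First, I would rewrite the continuity equation \eqref{masa} in non-conservative form. For a regular solution with $\varrho>0$, expanding $\dyw(\varrho \ve) = 0$ yields $\ve \cdot \nabla \varrho + \varrho \dyw \ve = 0$, equivalently
\begin{equation*}
\dyw \ve = -\ve \cdot \nabla (\ln \varrho).
\end{equation*}
Substituting into the definition \eqref{defG} of the effective viscous flux produces the key algebraic identity
\begin{equation*}
(2\mu + \nu)\,\ve \cdot \nabla (\ln \varrho) + \pi(\varrho) = G \qquad \text{in } \Omega.
\end{equation*}

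Next, since the solution is regular, $\varrho$ attains a maximum at some point $x^*\in\overline{\Omega}$ and a minimum at some $x_*\in\overline{\Omega}$. I would show that $\ve \cdot \nabla \varrho$ vanishes at both points. If $x^*$ is interior, then $\nabla \varrho(x^*)=0$ trivially. If $x^*\in\partial\Omega$, then $x^*$ is a critical point of $\varrho|_{\partial\Omega}$, so the tangential gradient $\nabla_\sty \varrho(x^*)$ vanishes; the impermeability condition \eqref{sciany} forces $\ve(x^*)$ to be tangent to $\partial\Omega$, hence $\ve(x^*)\cdot\nabla\varrho(x^*) = \ve(x^*)\cdot\nabla_\sty\varrho(x^*) = 0$. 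The same reasoning applies at $x_*$.

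Evaluating the identity at the extrema then gives
\begin{equation*}
\pi(\varrho(x^*)) = G(x^*) \leq \|G\|_\infty, \qquad \pi(\varrho(x_*)) = G(x_*) \geq -\|G\|_\infty.
\end{equation*}
The definition of $m_0$ and $1/n_0$ together with monotonicity of $\pi$ (assumption \eqref{cisnienie}) then yield $\varrho(x_*)\geq 1/n_0$ and $\varrho(x^*)\leq m_0$, which is the desired conclusion. The growth condition \eqref{wzrost} ensures $\pi$ eventually exceeds $\|G\|_\infty$, and the vacuum singularity \eqref{spadek} ensures $\pi$ eventually drops below $-\|G\|_\infty$, so both preimages are nonempty.

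The argument itself is short; the main obstacle is not in this a priori setting but rather in the eventual translation to the approximate solutions, as the author warns just before stating the proposition: the bare continuity equation does not give enough differentiability of $\varrho$ for the identity $\dyw\ve = -\ve\cdot\nabla(\ln\varrho)$ to make pointwise sense, so at the approximation level the argument must be carried out with a regularised continuity equation that forces the required smoothness (and in particular the positivity of $\varrho$ needed to take $\ln\varrho$).
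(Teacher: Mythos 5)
Your argument is correct and gives the stated bounds, but it takes a genuinely different route from the paper. You work pointwise: rewriting \eqref{masa} as $\dyw\ve=-\ve\cdot\nabla(\ln\varrho)$ and substituting into \eqref{defG} gives $(2\mu+\nu)\,\ve\cdot\nabla(\ln\varrho)+\pi(\varrho)=G$, after which you evaluate at an extremum of $\varrho$, observe that $\ve\cdot\nabla\varrho$ vanishes there (trivially in the interior; via tangency from \eqref{sciany} on $\brzeg$), and conclude $\pi(\varrho(x^*))=G(x^*)$, whence the bounds by monotonicity of $\pi$. This is a classical maximum-principle argument.

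The paper instead tests the weak continuity equation with powers $M^l(1/\varrho)$ (resp.\ $M^l(\varrho)$) of a cutoff function $M$, integrates by parts to trade $\ve\cdot\nabla(\cdot)$ for $\dyw\ve\,(\cdot)$, replaces $-(2\mu+\nu)\dyw\ve$ by $G-\pi(\varrho)$, localises to $\{\varrho\le 1/k\}$ (resp.\ $\{\varrho\ge k\}$) via the support of $M'$, and sends $l\to\infty$ to get $|\{\varrho\le 1/k\}|=0$ once $-\tfrac{k}{k+d}\pi(1/k)>\|G\|_\infty$. The conclusion is the same, but the mechanism is an integral estimate, not an evaluation at a point.

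The trade-off is worth noting. Your argument is shorter and cleaner, but it leans harder on pointwise structure: you need $\varrho$ continuous with a genuine extremum, $\varrho>0$ to take $\ln\varrho$, and the pointwise identity $\dyw\ve=-\ve\cdot\nabla(\ln\varrho)$. The paper's test-function version is written precisely so that it survives the passage to the actual approximate system: in Proposition~\ref{odciecie} the same cutoffs $M^l$ are applied to the regularised continuity equation \eqref{aprmasa}, where the extra terms $\varepsilon\Delta\varrho_\varepsilon$ and $\varepsilon(\varrho_\varepsilon-h)$ are absorbed as a small remainder $B_\varepsilon$, the error $\|\overline G-G_\varepsilon\|_2$ enters from Lemma~\ref{zbieznoscg}, and the conclusion is about the \emph{measure} of bad sets rather than a pointwise bound. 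A pointwise extremum argument has no obvious analogue in that setting, because the relevant inequality must be uniform in $\varepsilon$ and the object that converges nicely is $G$ in $L^2$, not pointwise. You correctly flagged this yourself, and your diagnosis matches the author's remark before the proposition. So: correct a priori proof, different method, and the paper's choice of method is the one that scales to the actual proof.
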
 
\begin{proof}
Let $k,d >0$. Let $M \in C^1(]0, \infty [)$ satisfy conditions
$$M(t) = \left\{ \begin{array}{rl} 
1 & \text{for }t \leq k,  \\
0 & \text{for }t \geq k + d. \\
\end{array} \right. $$
and $M'(t) < 0$ for $t \in ]k,k+d[$. Let $l \in \mathbb{N}$. 
Testing continuity equation with $M^l({1 \over \varrho})$ leads to 
\begin{multline} 
0 = \intO \varrho \ve \cdot \nabla M^l \left(\tfrac{1}{ \varrho}\right) \dd x= \intO \ve \cdot \nabla \int_\varrho^\infty \tfrac{1}{t} M^{l-1} \left(\tfrac{1}{t}\right) M'\left(\tfrac{1}{t}\right) \dd t\dd x = \\
= \intO \dyw \ve \int_\varrho^\infty t {\dd \over \dd t} M^{l} \left(\tfrac{1}{t}\right) \dd t\dd x 
\end{multline} 
and consequently  
\begin{equation} 
- \intO \pi(\varrho) \int_\varrho^\infty t {\dd \over \dd t} M^{l} \left(\tfrac{1}{t}\right) \dd t\dd x = - \intO G \int_\varrho^\infty t {\dd \over \dd t} M^{l} \left(\tfrac{1}{t}\right) \dd t\dd x. 
\end{equation} 
The derivative ${\dd \over \dd t} M^{l} \left(\tfrac{1}{t}\right)$ might be non-zero only for ${1 \over k+d} \leq t \leq \tfrac{1}{k}$, therefore 
\begin{equation}
  {1 \over k+d} \int_\varrho^\infty {\dd \over \dd t} M^{l} \left(\tfrac{1}{t}\right) \dd t \leq \int_\varrho^\infty t {\dd \over \dd t} M^{l} \left(\tfrac{1}{t}\right) \dd t \leq {1 \over k} \int_\varrho^\infty {\dd \over \dd t} M^{l} \left(\tfrac{1}{t}\right) \dd t. 
\end{equation} 
If $\varrho \geq \tfrac{1}{k}$, the quantity $\int_\varrho^\infty {\dd \over \dd t} M^{l} \left(\tfrac{1}{t}\right) \dd t = 1 - M^{l} \left(\tfrac{1}{\varrho}\right) $ vanishes and consequently, as $\pi(\varrho) \leq 0$ for $\varrho \leq {1 \over k}$, 
\begin{equation} 
- {1 \over k+d} \int_{\{\varrho \leq \tfrac{1}{k}\}} \pi(\varrho) (1 - M^{l} \left(\tfrac{1}{\varrho}\right)) \dd x \leq {1 \over  k} \int_{\{\varrho \leq \tfrac{1}{k}\}} |G| (1 - M^{l} \left(\tfrac{1}{\varrho}\right)) \dd x.
\end{equation} 
Passing to the limit $l \to \infty$ by dominated convergence,  
\begin{equation} 
- {k \over k + d} \pi(\tfrac{1}{k}) |\{\varrho \leq \tfrac{1}{k}\}|\leq - {k \over k + d} \int_{\{\varrho \leq \tfrac{1}{k}\}} \pi(\varrho)  \dd x \leq \|G\|_{\infty}|\{\varrho \leq \tfrac{1}{k}\}|. 
\end{equation} 
Therefore, for $k$ such that $- {k \over k + d} \pi(\tfrac{1}{k}) > \|G\|_\infty$, 
\begin{equation} 
|\{\varrho \leq \tfrac{1}{k}\}| = 0.
\end{equation} 
As $d > 0$ is arbitrary, the lower bound is proven. 

The same method might also be used to obtain upper bound on $\varrho$. Testing continuity equation with $M^l (\varrho)$ yields, after similar calculations, 
\begin{equation}
{k \over k + d} \pi(k) |\{\varrho \geq k\}|\leq \|G\|_{\infty}|\{\varrho \geq k\}|. 
\end{equation} 
Therefore, for $k$ such that $ {k \over k + d} \pi(k) > \|G\|_\infty$, 
\begin{equation} 
|\{\varrho \leq k\}| = 0.
\end{equation}

\end{proof}

\section{Approximation} 
Let $\varepsilon > 0$. We consider the following approximation of (\ref{ped}-\ref{cisnienie}): 
\begin{equation}
\tfrac{1}{2} \dyw (K(\varrho) \varrho \ve \otimes \ve) + \tfrac{1}{2} K(\varrho) \varrho \ve \cdot \nabla \ve - \mu \Delta \ve - (\mu + \nu) \nabla \dyw \ve  + \nabla P(\varrho) = K(\varrho) \varrho \fr + \vfr \qquad \text{in }\Omega,  
\label{aprped} 
\end{equation} 
\begin{equation} 
\dyw (K(\varrho) \varrho \ve) =  \varepsilon \Delta \varrho  - \varepsilon (\varrho - h) \qquad \text{in } \Omega, 
\label{aprmasa} 
\end {equation}
\begin{equation} 
\ve \cdot \mathbf{n} = 0 \qquad \text{on } \partial \Omega, 
\label{aprvnorm}
\end{equation} 
\begin{equation} 
\mathbf{n} \cdot \mathbb{T}(\ve, P(\varrho) ) \cdot \mathbf{\boldsymbol \tau} + f \ve \cdot \boldsymbol \tau = 0 \qquad \text{on } \partial \Omega, 
\label{aprposlizg}
\end{equation} 
\begin{equation} 
{\partial \varrho \over \partial \mathbf{n}} = 0 \qquad \text{on } \partial \Omega, 
\label{aprpnorm}
\end{equation} 
where 
\begin{equation} 
P(\varrho) = \int_0^\varrho \pi_+'(t) K(t)\dd t + \int_\varrho^\infty \pi_-'(t) K(t)\dd t = P_+(\varrho) - P_{-}(\varrho) 
\label{aprp}
\end{equation}
and $K \in C^1(\mathbb{R})$ satisfies conditions 
\begin{equation}
K(t) = \left\{ \begin{array}{rl} 
0 & \text{for }t \leq {1 \over n_2} \\
1 & \text{for } {1 \over n_1} \leq t \leq m_1  \\
0 & \text{for }t \geq m_2 \\
\end{array} \right. 
\end{equation} 
and $K'(t) > 0$ if $t \in ]{1 \over n_2}, {1 \over n_1}[$, $K'(t) < 0$ if $t \in ]m_1,m_2[$, $\tfrac{1}{n_2} < \tfrac{1}{n_1} < \varrho_0 < h < m_1 < m_2$. We fix the (non-important) value of differences $n_2 - n_1 = m_2 - m_1 = h$.

In this section we show the existence of weak solutions to the approximate system (\ref{aprped}-\ref{aprpnorm}) and prove estimates independent (or depending in controlled way) of parameters of approximation allowing extraction of weakly convergent sequence of approximate solutions such that $\varepsilon \to 0^+$. 

\subsection{Existence of solutions to the approximate system}
\begin{tw} 
Let $\Omega \in C^2$, $\varepsilon > 0$, $h >0$. For every $1 \leq p < \infty$ there exists a weak solution $(\varrho, \ve)$ to the approximate system~(\ref{aprped}-\ref{aprpnorm}), $\varrho \in W^{2,p}(\Omega)$, $\ve \in W^{2,p}(\Omega)$ . Furthermore, 
\begin{equation} 
{1 \over n_2} \leq \varrho \leq m_2 \quad \text{in } \Omega, 
\label{aprrhoogr}
\end{equation} 
\begin{equation} 
{1 \over |\Omega|} \intO \varrho \dd x = h .
\label{aprrhosrednia} 
\end{equation} 
\label{aprtw}
\end{tw}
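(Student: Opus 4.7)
My approach is a Leray--Schauder fixed point argument on the operator
$\mathcal T \colon \bar\ve \mapsto \ve$ defined on the closed subspace of
$W^{1,p}(\Omega)$ of vector fields satisfying $\bar\ve\cdot\norm = 0$ on $\brzeg$.
Given $\bar\ve$, I first solve the approximate continuity equation \eqref{aprmasa}
for $\varrho \in W^{2,p}(\Omega)$, then the (now linear) Lam\'e-type momentum
equation \eqref{aprped} for $\ve \in W^{2,p}(\Omega)$. The compact embedding
$W^{2,p} \hookrightarrow\hookrightarrow W^{1,p}$ together with a uniform a priori
$W^{1,p}$ bound on solutions of $\ve = s\mathcal T(\ve)$, $s\in[0,1]$, will supply
the fixed point.

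For the quasilinear continuity problem
\begin{equation*}
\varepsilon\Delta\varrho - \varepsilon(\varrho - h) = \dyw(K(\varrho)\varrho\bar\ve)
\quad \text{in } \Omega, \qquad \partial \varrho/\partial \norm = 0 \quad \text{on } \brzeg,
\end{equation*}
I apply an inner Schauder iteration: freezing the bounded Lipschitz function
$K(\tilde\varrho)\tilde\varrho$ on the right-hand side yields, via $L^p$ elliptic
theory for the Laplacian with Neumann data, a $W^{2,p}$ solution depending compactly
on $\tilde\varrho$ through $W^{2,p}\hookrightarrow\hookrightarrow L^\infty$. The
crucial pointwise bound \eqref{aprrhoogr} is then obtained by testing the
fixed-point equation with $(\varrho - m_2)_+$: the Neumann condition kills the
boundary term coming from $\varepsilon\Delta\varrho$, while the identities
$K\equiv 0$ on $\{\varrho \geq m_2\}$ and $\bar\ve\cdot\norm=0$ on $\brzeg$ force
the convective contribution to vanish identically, leaving
\begin{equation*}
-\varepsilon\intO |\nabla(\varrho - m_2)_+|^2\dd x
- \varepsilon\intO(\varrho - h)(\varrho-m_2)_+\dd x = 0,
\end{equation*}
whose non-positivity (since $m_2 > h$) forces $(\varrho - m_2)_+\equiv 0$. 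The
lower bound $\varrho \geq 1/n_2$ follows analogously by testing with
$(1/n_2 - \varrho)_+$ and using $1/n_2 < h$. The mass constraint
\eqref{aprrhosrednia} comes from integrating the continuity equation over $\Omega$:
the divergence and Laplacian terms drop by the respective boundary conditions,
forcing $\intO(\varrho - h)\dd x = 0$.

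With $\varrho$ so produced, $P(\varrho)$ is smooth and bounded, and the momentum
equation becomes a linear elliptic system for $\ve$ with slip boundary conditions;
$L^p$ elliptic theory for the Lam\'e operator with slip data (as developed in
\cite{mp}) yields $\ve \in W^{2,p}$ depending continuously on $\bar\ve$. The uniform
a priori bound for Leray--Schauder comes from testing the momentum equation with
$\ve$: the symmetric splitting in \eqref{aprped}, combined with $\ve\cdot\norm = 0$,
makes the convective contribution $\intO[\tfrac12\dyw(K\varrho\bar\ve)|\ve|^2
+ K\varrho(\bar\ve\cdot\nabla\ve)\cdot\ve]\dd x$ vanish exactly via integration
by parts, without any recourse to the continuity equation. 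The resulting identity
leaves dissipation, a pressure-by-divergence term controlled via
$\|P(\varrho)\|_\infty$ (itself bounded through the pointwise density bounds),
and a forcing term controlled via $\fr,\vfr\in L^\infty$; Korn--Poincar\'e adapted
to the slip setting yields $\|\ve\|_{W^{1,2}} \leq C$ uniformly in $s$, and elliptic
regularity bootstraps this to $W^{1,p}$ for any finite $p$. The main obstacle
throughout is the nonlinear continuity equation and its two-sided pointwise
bounds: the cutoff $K$ is precisely engineered so that the truncation tests close,
and without it the maximum principle would interact poorly with both the Neumann
condition and the transport term, obstructing both the fixed point and the
definition of $P(\varrho)$ near vacuum.
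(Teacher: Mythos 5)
Your proposal follows essentially the same strategy as the paper: an outer Leray--Schauder fixed point on the velocity, a two-step density-then-velocity solve, a maximum-principle argument for the pointwise bounds \eqref{aprrhoogr}, and the cancellation of the convective term in the energy estimate. A few of your technical choices differ from, and are somewhat cleaner than, the paper's. \textbf{(i)} You test the density equation directly with $(\varrho - m_2)_+$; since $\varrho\in W^{2,p}$ for large $p$ this is a valid test function, and keeping the Dirichlet term $-\varepsilon\intO|\nabla(\varrho-m_2)_+|^2$ lets you conclude at once. The paper instead tests with $((\varrho-m_2)_+ + \delta)^\eta$ ($0<\eta<1$), discards the gradient term, and passes $\delta\to 0^+$, $\eta\to 0^+$ to obtain a measure statement. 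Both work; yours is more direct. \textbf{(ii)} You recover the mass constraint \eqref{aprrhosrednia} by integrating the approximate continuity equation, using the two homogeneous boundary conditions; the paper instead builds the constraint structurally into the definition of the inner iteration (the space $N^p_h$). Again, both are valid. \textbf{(iii)} In the outer energy estimate you bound the pressure term by $\|P(\varrho)\|_\infty\|\dyw\ve\|_2$, observing that $P$ is a bounded function because $K$ has compact support. The paper instead uses the approximate continuity equation to rewrite $\intO\ve\cdot\nabla P(\varrho)$, but that computation is really aimed at the later, $\varepsilon$-independent estimate (Proposition \ref{apriori}); at fixed $\varepsilon$ your shorter argument suffices. \textbf{(iv)} One place deserving more care: for the inner density solve you propose Schauder (a self-map of a compact convex set) while the paper uses Leray--Schauder (an a priori bound on the parametrized family $\varrho = tT(\varrho)$). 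For Schauder you must exhibit an invariant ball; this does work out because the elliptic estimate gives $\|\varrho\|_{W^{1,p}} \le C(\varepsilon)\,\|K(\tilde\varrho)\tilde\varrho\|_\infty\|\bar\ve\|_p + Ch \le C(\varepsilon, m_2, \|\bar\ve\|_{1,p})$, which is independent of $\|\tilde\varrho\|_{W^{1,p}}$ (since $K(\cdot)\cdot$ is uniformly bounded), but this should be stated explicitly rather than left implicit, as it is not an automatic consequence of the $W^{2,p}$ estimate you quote. Finally, like the paper, you do not address uniqueness of the density solve (needed for the map $S$ to be well-defined and continuous); the paper also glosses over this, so it is not a gap specific to your argument, but worth noting.
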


The proof of existence generally follows the ideas in \cite{ns}, Chapter 4.\;and \cite{mp} and is based on application of the Leray-Schauder fixed point theory. However, we do not refrain from using estimates \eqref{aprrhoogr} whenever it is convenient in order to simplify estimates for the fixed point theorem (estimates \eqref{Tfixedosz}, \eqref{oszaprv}). We also notice the issue of continuity of the operator $S$ which seems to be omitted in \cite{ns,mp}. The following simple statement of the Leray-Schauder theory (following e.\,g.\;from 1.4.11.8 in \cite{ns}) is sufficient for our needs. 
\begin{tw}[Leray-Schauder fixed point theorem]
Let $X$ be a Banach space and let $T\colon X \to X$ be a continuous, compact operator on $T$. If the set 
$$\{ u \in X \colon u = t T(u), \quad 0 \leq t \leq 1 \}$$ 
is bounded in $X$, then $T$ has a fixed point in $X$.   
\label{schafer}
\end{tw}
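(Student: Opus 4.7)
The plan is the standard reduction of Leray--Schauder to Schauder's fixed point theorem by means of a radial retraction onto a sufficiently large closed ball. The bounded a priori set $S = \{u \in X : u = t T(u) \text{ for some } 0 \le t \le 1\}$ has, by hypothesis, some radius $R_0 < \infty$, i.\,e.\ $\|u\| \le R_0$ for all $u \in S$. I would fix $R > R_0$ \emph{strictly}, the strictness being exactly what will rule out an unwanted case later.

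First I define the radial projection $\rho \colon X \to \overline{B}_R$ by $\rho(u) = u$ on the closed ball $\overline{B}_R = \{u \in X : \|u\| \le R\}$ and $\rho(u) = R u / \|u\|$ outside. This map is Lipschitz, hence continuous. Then I set $T_R = \rho \circ T \colon \overline{B}_R \to \overline{B}_R$. Since $T$ is compact on $X$ and $\rho$ is continuous with bounded image, $T_R$ is a continuous, compact self-map of the closed, bounded, convex set $\overline{B}_R$. Schauder's fixed point theorem (which I take as the background input) then supplies a fixed point $u^\ast \in \overline{B}_R$ of $T_R$.

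The real content is showing that $u^\ast$ is actually a fixed point of the original $T$. I would split into the two natural cases. If $\|T(u^\ast)\| \le R$ then $\rho$ acts as the identity at $T(u^\ast)$, so $u^\ast = T_R(u^\ast) = T(u^\ast)$ and we are done. If on the other hand $\|T(u^\ast)\| > R$, then by definition of $\rho$ one has $u^\ast = R T(u^\ast)/\|T(u^\ast)\| = t^\ast T(u^\ast)$ with $t^\ast = R/\|T(u^\ast)\| \in (0,1)$. This places $u^\ast$ in the set $S$, so $\|u^\ast\| \le R_0 < R$; but simultaneously $\|u^\ast\| = t^\ast \|T(u^\ast)\| = R$, a contradiction. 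Hence the second case cannot occur and the fixed point of $T_R$ is a fixed point of $T$.

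The only genuine obstacle is the input from Schauder's theorem; granting that, the argument is a clean and purely topological manipulation, and the proof essentially writes itself once the radial retraction $\rho$ is introduced. The one delicate point is the strict inequality $R > R_0$, which is what transforms the a priori bound from the hypothesis into the strict bound needed to close the contradiction in the second case.
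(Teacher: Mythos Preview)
Your argument is correct: the reduction to Schauder's theorem via the radial retraction onto a closed ball of radius $R>R_0$ is the standard proof of this version of the Leray--Schauder (Schaefer) fixed point theorem, and your handling of the two cases, including the strict inequality $R>R_0$ to force the contradiction, is clean.

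However, the paper does not actually prove this theorem. It is stated as a known background result, with the remark that it follows from 1.4.11.8 in the reference \cite{ns}. So there is no ``paper's own proof'' to compare against; you have supplied a proof where the paper simply cites one.
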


We now define operator $\mathcal T$, whose fixed points are weak solutions to (\ref{aprped}-\ref{aprpnorm}). For $1 \leq p \leq \infty$ let 
\begin{equation}
M^p = \{\we \in W^{1,p}(\Omega) \colon \we \cdot \norm = 0 \text{ on } \partial \Omega\}.  
\end{equation} 
If $f=0$ and $\Omega$ is symmetric with respect to some $x_0 \in \mathbb R^2$, we take instead $M^p = \{\we \in W^{1,p}(\Omega) \colon \we \cdot \norm = 0 \text{ on } \partial \Omega\ \text{ and }\intO (x-x_0) \times \ve \dd x = 0\}$. Let 
\begin{equation}
S \colon M^\infty \ni \ve \mapsto \varrho \in W^{2,p}(\Omega), \qquad 1 \leq p < \infty
\end{equation} 
solve the problem 
\begin{equation} 
- \varepsilon \Delta \varrho = - \varepsilon (\varrho - h) - \dyw (K(\varrho) \varrho \ve) \qquad \text{in } \Omega, 
\label{defS1}
\end{equation} 
\begin{equation} 
{\partial \varrho \over \partial \norm} = 0 \qquad \text{on } \partial \Omega
\label{defS2}
\end{equation} 
and let 
\begin{equation} 
\mathcal{T} \colon M^\infty \ni \ve \mapsto \we \in M^\infty 
\end{equation} 
solve the system
\begin{multline}
- \mu \Delta \we - (\mu + \nu) \nabla \dyw \we = - \tfrac{1}{2} \dyw (K(\varrho) \varrho \ve \otimes \ve) - \tfrac{1}{2} K(\varrho) \varrho \ve \cdot \nabla \ve - \nabla P(\varrho) + K(\varrho) \varrho \fr + \vfr, \\ 
\varrho = S(\ve) \qquad \text{in } \Omega, 
\label{deft1}
\end{multline}
\begin{equation}
\we \cdot \norm = 0 \qquad \text{on } \brzeg, 
\label{deft2}
\end{equation}
\begin{equation} 
\norm \cdot \mathbb{T}(\we, P(\varrho)) \cdot \sty + f\we \cdot \sty = 0 \qquad \text{in } \partial \Omega.  
\label{deft3}
\end{equation} 

\begin{lemat}
Under the assumptions of Theorem \ref{aprtw}, $S$ is a well-defined, continuous operator from $M^\infty$ to $W^{2,p}(\Omega)$ for any fixed $1 \leq p < \infty$. Furthermore, if $\varrho = S(\ve)$, $\|\ve\|_{1, \infty} \leq L$, then
$$\| \varrho \|_{2, p} \leq  C(p, \varepsilon, n_2, m_2, L), \quad \tfrac{1}{n_2} \leq \varrho \leq m_2 ,$$ 
$${1 \over |\Omega|} \intO \varrho \dd x = h . $$
\label{stwrhoapr}
\end{lemat}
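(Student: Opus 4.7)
My plan is to realize $S$ as the solution operator of the quasilinear Neumann problem $(\ref{defS1})$--$(\ref{defS2})$ and to produce it by the Leray--Schauder scheme (Theorem \ref{schafer}). For fixed $\mathbf{v} \in M^\infty$ with $\|\mathbf{v}\|_{1,\infty} \leq L$ and some $q > 2$, I would define an auxiliary map $T \colon W^{1,q}(\Omega) \to W^{1,q}(\Omega)$ sending $\tilde\varrho$ to the unique solution of the linear Neumann problem
\begin{equation*}
-\varepsilon \Delta \varrho + \varepsilon \varrho = \varepsilon h - \dyw\bigl(K(\tilde\varrho)\tilde\varrho\, \mathbf{v}\bigr), \qquad \partial_\mathbf{n} \varrho = 0 \text{ on } \brzeg.
\end{equation*}
Because $t \mapsto tK(t)$ is $C^1$ with compactly supported derivative, and $W^{1,q} \hookrightarrow C^0$ for $q>2$, the induced Nemytskii map is continuous on $W^{1,q}$; combined with $L^q$ Neumann theory, this makes $T$ well-defined, continuous, and compact (the image lies in $W^{2,q}$, which embeds compactly into $W^{1,q}$). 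A fixed point of $tT$ satisfies
\begin{equation*}
-\varepsilon \Delta \varrho + \varepsilon \varrho + t\, \dyw(K(\varrho)\varrho \mathbf{v}) = t \varepsilon h, \qquad \partial_\mathbf{n}\varrho = 0,
\end{equation*}
and testing with $(\varrho - m_2)_+$ the convective term disappears because $K \equiv 0$ beyond $m_2$, leaving
\begin{equation*}
\varepsilon \int_\Omega |\nabla (\varrho - m_2)_+|^2 + \varepsilon \int_\Omega (\varrho - m_2)_+^2 + \varepsilon(m_2 - t h) \int_\Omega (\varrho - m_2)_+ = 0,
\end{equation*}
which forces $\varrho \leq m_2$, as $m_2 > h \geq t h$. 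The symmetric test with $(\tfrac{1}{n_2} - \varrho)_+$ yields $\varrho \geq \tfrac{1}{n_2}$, using $\tfrac{1}{n_2} < h$ and $K \equiv 0$ below $\tfrac{1}{n_2}$. The Leray--Schauder family is thus uniformly bounded in $W^{1,q}$, producing a fixed point $\varrho = S(\mathbf{v})$, and integrating the equation with $\mathbf{v}\cdot\mathbf{n} = \partial_\mathbf{n}\varrho = 0$ gives the mass identity $\int_\Omega \varrho = h|\Omega|$.

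The quantitative $W^{2,p}$ bound then comes from a one-step bootstrap: the pointwise bounds place $K(\varrho)\varrho\mathbf{v}$ in $L^\infty$, so $\varrho \in W^{1,p}$ by elliptic $L^p$-theory for the Neumann Laplacian, and expanding
\begin{equation*}
\dyw(K(\varrho)\varrho\mathbf{v}) = \bigl(K(\varrho) + K'(\varrho)\varrho\bigr) \mathbf{v} \cdot \nabla \varrho + K(\varrho)\varrho\, \dyw \mathbf{v} \in L^p,
\end{equation*}
a second application yields $\varrho \in W^{2,p}$ with a constant of the stated form $C(p,\varepsilon,n_2,m_2,L)$.

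The main obstacle, which the author identifies as overlooked in \cite{ns,mp}, is to prove that $S$ is single-valued and continuous. For uniqueness I would take two solutions $\varrho_1, \varrho_2$ (both $C^{1,\alpha}$ by Sobolev embedding) and study the difference $w = \varrho_1 - \varrho_2$, which satisfies the linear Neumann problem
\begin{equation*}
-\varepsilon \Delta w + \varepsilon w + \dyw\bigl(\overline{g}\, w\, \mathbf{v}\bigr) = 0, \qquad \partial_\mathbf{n} w = 0,
\end{equation*}
with $\overline{g}(x) = \int_0^1 (tK(t))'\bigr|_{t = s\varrho_1 + (1-s)\varrho_2}\, \mathrm{d}s$ bounded in terms of $n_2, m_2$ only. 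Testing with $w$, integrating by parts using $\mathbf{v}\cdot\mathbf{n} = 0$, and applying Young's inequality reduce the task to showing that the coercive part $\varepsilon$ dominates the transport contribution; I expect this to be the delicate point of the lemma and to need either a weighted test function or a direct contraction construction of $S$ on a carefully chosen closed subset of $W^{2,p}$ in order to close the argument independently of the size of $L$. Once uniqueness is secured, continuity of $S\colon M^\infty \to W^{2,p}$ follows by a standard stability argument: given $\mathbf{v}_k \to \mathbf{v}$ in $M^\infty$, the previous bound makes $\{S(\mathbf{v}_k)\}$ relatively compact in $W^{1,p}$, the nonlinear Neumann equation passes to the limit on a subsequence, uniqueness identifies the limit as $S(\mathbf{v})$, and strong $W^{2,p}$-convergence is then obtained by applying the linear elliptic stability estimate to the difference of the equations satisfied by $S(\mathbf{v}_k)$ and $S(\mathbf{v})$.
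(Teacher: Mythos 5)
Your existence scheme (Leray--Schauder on an auxiliary linear Neumann problem, direct maximum principle for the pointwise bounds, then a two-step elliptic bootstrap to $W^{2,p}$) is essentially the paper's, and in places cleaner: testing with $(\varrho-m_2)_+$ directly is a more transparent route to the upper bound than the paper's regularised indicators $((\varrho-m_2)_++\delta)^\eta$. One small inaccuracy: because your linearisation keeps the zeroth-order term $\varepsilon\varrho$ self-referential (rather than using the previous iterate, as the paper does by working in $N^p_h=\{\intO\varrho = h|\Omega|\}$), integrating the equation for $\varrho = tT(\varrho)$ gives $\intO\varrho = th|\Omega|$, so for small $t$ the lower bound $\varrho\geq\tfrac{1}{n_2}$ cannot hold and your claim that the $(\tfrac{1}{n_2}-\varrho)_+$ test ``yields $\varrho\geq\tfrac{1}{n_2}$'' is only correct at $t=1$. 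This is harmless for Leray--Schauder (only the upper bound is needed for the uniform $W^{1,q}$ estimate) and correct for the final assertion, but the statement as written overclaims.

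The genuine gap is the one you flag yourself: you do not prove that $S$ is single-valued (and hence you cannot run your ``compactness + uniqueness + linear stability'' scheme for continuity). You correctly observe that testing the difference equation $-\varepsilon\Delta w+\varepsilon w+\dyw(\overline g\,w\,\ve)=0$ with $w$ itself and applying Young's inequality leaves a term $\tfrac{C^2}{2\varepsilon}\|w\|_2^2$ that cannot be absorbed by $\varepsilon\|w\|_2^2$ for small $\varepsilon$, and you speculate about a ``weighted test function'' or a ``direct contraction.'' The paper closes this precisely by replacing the $L^2$ energy test with the regularised sign tests already used for the pointwise bounds: testing with $\psi(w)=((w)_++\delta)^\eta$ and letting $\delta\to 0^+$, $\eta\to 0^+$. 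The diffusion contribution is nonnegative, the zeroth-order contribution converges to $\varepsilon\intO w_+$, and the convective contribution $-\intO \overline g\, w\,\psi'(w)\,\ve\cdot\nabla w$ is controlled by $\eta\|\overline g\|_\infty\|\ve\|_\infty\|w\|_\infty^\eta\|\nabla w\|_1$, which vanishes as $\eta\to 0^+$; so one obtains $\varepsilon\|w_+\|_1\leq 0$ (and symmetrically for $w_-$), with no competition against the diffusion. The crucial point you miss is that passing from an $L^2$- to an $L^1$-type estimate makes the troublesome convective term vanish in the limit rather than merely being bounded, which sidesteps the Young-inequality obstruction entirely. Applied to the difference $\varrho_k-\varrho$ for $\ve_k\to\ve$ this same computation gives $L^1$ continuity of $S$ directly (the extra term $K(\varrho)\varrho(\ve_k-\ve)$ contributes an error controlled by $\|\ve_k-\ve\|_\infty$), after which interpolation with the uniform $W^{2,p}$ bound and linear elliptic regularity upgrades the convergence to $W^{2,p}$, as in the last step of your stability argument.
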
 
For the proof of Lemma \ref{stwrhoapr} an information about existence and regularity of solutions to the weak Neumann problem  
\begin{equation} 
- \Delta \varrho = \dyw \mathbf b \quad \text{in } \Omega, 
\label{neumann1}
\end{equation}
\begin{equation} 
{\partial \varrho \over \partial \norm} = \mathbf b \cdot \norm \quad \text{on } \brzeg  
\label{neumann2}
\end{equation} 
is required. 
\begin{tw}[Neumann problem]
Let $\Omega$ be a bounded domain of class $C^2$. Let $\mathbf b \in L^p(\Omega)$, $1 < p < \infty$ be a vector field on $\Omega$. There exists a weak solution to (\ref{neumann1}, \ref{neumann2}), i.\,e.\;$\varrho \in W^{1,p}(\Omega)$ such that 
$$\intO \nabla \varrho \cdot \nabla \eta \dd x = - \intO \mathbf b \cdot \nabla \eta \quad \text{for every } \eta \in C^\infty(\mathbb R ^2). $$ 
The weak solution $\varrho$ satisfies an estimate  
$$\|\nabla \varrho \|_p \leq C(p) \| \mathbf b \|_p.$$
Any other solution to (\ref{neumann1}, \ref{neumann2}) has the form $\varrho + c$, $c \in \mathbb R$. 

If moreover $\mathbf b \in W^{k,p} (\Omega)$ and the boundary of $\Omega$ is of class $C^{k+1}$ for $k=1,2,\ldots$, then $\nabla \varrho \in W^{k,p}(\Omega)$ and 
$$\|\nabla \varrho \|_{k, p} \leq C(p) \|\mathbf b\|_{k, p}. $$
\label{neumann}
\end{tw}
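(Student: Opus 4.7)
The plan is three-fold: first establish existence and uniqueness (modulo constants) in the Hilbert case $p=2$; then derive the $L^p$-estimate by localization, boundary flattening, and classical singular integral theory; finally bootstrap to higher regularity. For $p=2$, observe that the weak formulation $\intO \nabla \varrho\cdot\nabla\eta\dd x = -\intO \mathbf{b}\cdot\nabla\eta\dd x$ depends only on the gradient of $\varrho$, so the natural working space is the quotient $\dot W^{1,2}(\Omega) := W^{1,2}(\Omega)/\mathbb R$ equipped with the seminorm $\|\nabla\cdot\|_2$, which is equivalent to the quotient norm by Poincar\'e--Wirtinger. The bilinear form is continuous and coercive on $\dot W^{1,2}(\Omega)$ and the right-hand side is a continuous functional bounded by $\|\mathbf b\|_2\|\nabla\eta\|_2$, so Lax--Milgram delivers a unique $\varrho$ (up to additive constants) with $\|\nabla\varrho\|_2\leq\|\mathbf b\|_2$. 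The compatibility condition usually demanded of Neumann data is automatically present in this formulation: testing with $\eta \equiv 1$ gives $0=0$. Uniqueness in the $L^p$ class then reduces to the homogeneous case, handled by testing with $\eta=\varrho$ for $p\geq 2$ or by a duality argument for $p<2$.

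For general $1<p<\infty$ and the quantitative bound, I would use a partition of unity subordinate to a finite $C^2$-atlas of $\overline\Omega$. In interior charts the problem reduces, after extending $\mathbf b$ by zero, to the whole-space identity relating $\nabla v$ to $\mathbf{b}$ through (second-order) Riesz transforms, whose $L^p$-boundedness is the classical Calder\'on--Zygmund theorem. In boundary charts, after a $C^2$ diffeomorphism flattening $\partial\Omega$, the boundary condition becomes the standard Neumann condition on a half-plane and can be handled by even reflection, again reducing to a singular integral on $\mathbb R^2$; alternatively, one may invoke the Agmon--Douglis--Nirenberg theory for elliptic boundary value problems satisfying the Lopatinski\u{\i}--Shapiro complementing condition, of which the Neumann Laplacian is a prototype. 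Patching via the partition of unity produces commutator terms involving $\varrho$ itself; these are of lower order and can be absorbed on the left after subtracting the mean and invoking the previously obtained $L^2$-solvability.

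Higher regularity is then obtained by the standard tangential difference quotient bootstrap: for $\mathbf b\in W^{k,p}$ one differentiates the equation $k$ times in directions tangential to $\partial\Omega$, applies the $L^p$-estimate to each resulting Neumann problem, and recovers normal derivatives from the equation itself. I expect the main technical obstacle to be the $L^p$-theory away from $p=2$: while both the Calder\'on--Zygmund and Agmon--Douglis--Nirenberg routes are classical, executing the localization carefully---so that the commutators truly reduce to lower-order terms, and so that the output regularity is $W^{1,p}$ (appropriate to divergence-form data) rather than $W^{2,p}$---requires attention. Since in the present paper the theorem is invoked as a black box, I would ultimately cite standard references such as Ne\v{c}as's book (consistent with the paper's own reference \cite{ns}) rather than reproduce the argument in full.
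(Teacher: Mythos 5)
The proposal is correct and takes essentially the same approach as the paper, which does not reproduce a proof but simply refers to standard elliptic theory (the Agmon--Douglis--Nirenberg framework, \cite{adn}). Your sketch---Lax--Milgram on the quotient space for $p=2$, localization plus Calder\'on--Zygmund/ADN for general $p$, difference quotients for higher regularity---is precisely the argument one finds in those references, and your conclusion that the result should be cited rather than reproved matches the paper's treatment.
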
 
The proof of this fact follows from standard elliptic theory, see \cite{adn}. 

\begin{proof}[Proof of Lemma \ref{stwrhoapr}]
For the purpose of proof we introduce for any $p > 2$ a space $N^p_h = \{\varrho \in W^{1,p}(\Omega) \colon {1 \over |\Omega|} \intO \varrho = h\} $
and operator $T\colon  N^p_h \ni \xi \mapsto \varrho \in N^p_h$ defined by
\begin{equation} 
- \varepsilon \Delta \varrho = - \dyw (K(\xi) \xi \ve) - \varepsilon (\xi - h) \qquad \text{in } \Omega, 
\label{defT1}
\end{equation} 
\begin{equation} 
{\partial \varrho \over \partial \norm} = 0 \qquad \text{on } \partial \Omega   
\label{defT2}
\end{equation} 
in the weak sense for a given $\ve \in M^\infty$. 

Let $\xi \in N^p_h$. Let us define a vector field $\mathbf b \in W^{1,p}(\Omega)$ by $\mathbf b = - \mathcal B (\xi - h) - {1 \over \varepsilon} K(\xi) \xi  \ve$. As $\varrho$ is then a weak solution to (\ref{neumann1}, \ref{neumann2}) with r.\,h.\,s.\;given by $\mathbf b$, we see that $T$ is well defined. Due to Lipschitz continuity of $t \mapsto K(t)t$, $T$ is in fact continuous. Furthermore, as 
\begin{equation}
\nabla (K(\xi) \xi  \ve) = (\xi K'(\xi) \nabla \xi + K(\xi) \nabla \xi) \otimes \ve + K(\xi) \xi \nabla \ve 
\end{equation} 
and $p > 2$ we have an estimate 
\begin{equation} 
\| \mathbf b \|_{1, p} \leq  C (\|\xi\|_p + \tfrac{1}{\varepsilon} (1 + \|\xi \|_{1, p}) \|\xi \|_{1, p} \|\ve\|_{\infty}) .  
\end{equation} 
Due to regularity of Neumann problem, 
\begin{equation} 
\|\nabla T \xi \|_{1, p} \leq C(\varepsilon, L) (1 + \|\xi \|_{1, p})\|\xi\|_{1, p} 
\label{Txi}
\end{equation} 
which implies that $T$ is compact.  

Now, let $\varrho \in N^p_h$ satisfy $\varrho = t T(\varrho)$, i.\,e.\;
\begin{equation} 
- \varepsilon \Delta \varrho = - t \varepsilon (\varrho - h) - t \dyw (K(\varrho) \varrho \ve) \qquad \text{in } \Omega, 
\label{Tt1}
\end{equation} 
\begin{equation} 
{\partial \varrho\ \over \partial \norm} = 0 \qquad \text{on } \partial \Omega .
\label{Tt2}
\end{equation} 
By estimate \eqref{Txi}, $\varrho \in W^{2,p}(\Omega)$. Now we show that the estimate $\tfrac{1}{n_2} \leq \varrho \leq m_2$ holds for all $t$. 

To prove the upper bound on $\varrho$, we test (\ref{Tt1}, \ref{Tt2}) with functions 
\begin{equation} 
((\varrho - m_2)_+ + \delta)^\eta \in W^{1,p}(\Omega), \quad 0<\eta <1, \, 0 < \delta 
\label{regtest}
\end{equation}
obtaining
\begin{multline} 
\varepsilon \intO \nabla\varrho \cdot \nabla((\varrho - m_2)_+ + \delta)^\eta \dd x\\  = - t \varepsilon \intO(\varrho - h)((\varrho - m_2)_+ + \delta)^\eta \dd x + t \intO K(\varrho) \varrho \ve \cdot \nabla((\varrho - m_2)_+ + \delta)^\eta \dd x . 
\end{multline} 
The second term on the r.\,h.\,s.\;equals $0$, while the one on the l.\,h.\,s.\;is non-negative as 
\begin{equation} 
\intO \nabla\varrho \cdot \nabla ((\varrho - m_2)_+ + \delta)^\eta \dd x = \eta \intO |\nabla (\varrho - m_2)_+|^2 ((\varrho - m_2)_+ + \delta)^{\eta - 1} \dd x . 
\end{equation} 
Consequently, 
\begin{equation} 
0 \geq \intO  (\rho - h) ((\varrho - m_2)_+ + \delta)^ \eta \dd x \geq (m_2 - h) \intO ((\varrho - m_2)_+ + \delta)^\eta \dd x . 
\end{equation} 
Passing to the limit $\delta \to 0^+$ and then $\eta \to 0^+$ using dominated convergence yields 
\begin{equation} 
0 \geq (m_2 - h) |\{x \in \Omega \colon \varrho > m_2\}| 
\end{equation} 
which, due to choice of $m_2$, implies
\begin{equation} 
|\{x \in \Omega \colon \varrho > m_2\}| = 0 .
\end{equation} 
The proof that $|\{x \in \Omega \colon \varrho < {1 \over n_2} \}| = 0$ is analogous. 

Now, let
\begin{equation} 
\mathbf b = -  t \mathcal B (\varrho - h) - \tfrac{t}{\varepsilon} K(\varrho) \varrho \ve. 
\end{equation} 
Then $\varrho$ is a weak solution to the Neumann problem (\ref{neumann1}, \ref{neumann2}) and therefore  
\begin{equation} 
\|\nabla \varrho \|_p \leq C \| \mathbf b \|_p \leq C  (\|\varrho\|_p + \tfrac{1}{\varepsilon} \|\ve\|_{1, \infty} \|\varrho\|_p) \leq C(p, \varepsilon, m_2, L) , 
\end{equation} 
and consequently 
\begin{equation} 
\| \varrho\|_{1, p} \leq  C(p, \varepsilon, m_2, L) .
\label{Tfixedosz}
\end{equation} 

By the Leray-Schauder theorem, there exists a fixed point $\varrho \in W^{2,p}(\Omega)$ of the operator $T$. Therefore, $S$ is well defined (into $W^{2,p}(\Omega)$ for any $1 \leq p < \infty$ as $\Omega$ is bounded). The inequalities (\ref{Txi}, \ref{Tfixedosz}) imply estimate on the norm $\|\varrho\|_{2, p}$. Let now $\varrho_k = S(\ve_k)$, $\ve_k \to \ve$ in $W^{1,\infty}(\Omega)$. Then 
\begin{equation} 
- \varepsilon \Delta (\varrho_k - \varrho) = - \varepsilon (\varrho_k - \varrho) - \dyw ((K(\varrho_k) \varrho_k - K(\varrho)\varrho)\ve_k + K(\varrho)\varrho(\ve_k - \ve)) \qquad \text{in } \Omega, 
\label{contS1}
\end{equation} 
\begin{equation} 
{\partial (\varrho_k - \varrho) \over \partial \norm} = 0 \qquad \text{on } \partial \Omega.
\label{contS2}
\end{equation} 
Testing (\ref{contS1}, \ref{contS2}) with $\mathbf 1_{\varrho_k - \varrho > 0}$ and $\mathbf 1_{\varrho_k - \varrho < 0}$ (regularised as in \eqref{regtest}) implies convergence $\varrho_k \to \varrho$ in $L^1(\Omega)$. Now, as $\varrho_k$ is bounded in $W^{2, p}(\Omega)$ we have e.\,g.\;strong convergence in any $L^p(\Omega)$ and in consequence, due to Theorem \ref{neumann}, in $W^{2,p}$, which means continuity of $S$. 
\end{proof}

We now show the existence of a solution to the system (\ref{aprped}-\ref{aprpnorm}) applying the Leray-Schauder theorem to the operator $\mathcal{T}$. To this point we use the information on existence and regularity of solutions to the Lam\'e system. 
\begin{tw}[Lam\'e system] 
Let $1 < p < \infty$, $\Omega \in C^2$, $\mathbf f \in (M^p)^*$, $\mu > 0$, $2 \mu + 3\nu >0$. Then there exists unique $\mathbf w \in M^p$ such that 
$$- \mu \Delta \we - (\mu + \nu) \nabla \dyw \we = \mathbf f \qquad \text{in } \Omega, $$
$$\we \cdot \norm = 0 \qquad \text{on } \brzeg, $$
$$\norm \cdot \mathbb{T}(\we, P(\varrho)) \cdot \sty + f\we \cdot \sty = 0 \qquad \text{on } \partial \Omega $$
and
$$\|\we \|_{1, p} \leq C(p) \| \mathbf f \|_{(M^p)^*}.$$
If moreover $\mathbf f \in L^p(\Omega)$, then $\we \in W^{2,p}(\Omega)$ and 
$$\|\we \|_{2, p} \leq C(p) \|\mathbf f \|_p. $$
\label{lame}
\end{tw}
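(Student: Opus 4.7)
The plan is to reduce the problem to the classical elliptic theory for systems. First I would introduce the bilinear form
\begin{equation*}
a(\we,\boldsymbol\varphi) = 2\mu \intO \mathbb D(\we):\mathbb D(\boldsymbol\varphi)\dd x + \nu \intO \dyw \we \dyw \boldsymbol\varphi \dd x + \int_{\partial\Omega} f\,(\we\cdot\sty)(\boldsymbol\varphi\cdot\sty)\dd S
\end{equation*}
on $M^2\times M^2$. A standard integration by parts shows that the PDE together with the slip conditions \eqref{deft2}--\eqref{deft3} is equivalent to the weak formulation $a(\we,\boldsymbol\varphi)=\langle \mathbf f,\boldsymbol\varphi\rangle$ for every $\boldsymbol\varphi\in M^2$. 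Note here that the $P(\varrho)\mathbb I$ contribution drops out of the tangential stress condition because $\norm\cdot \mathbb I\cdot\sty=0$, so the bilinear form does not depend on $\varrho$.

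For the case $p=2$, I would apply the Lax--Milgram theorem. Continuity of $a$ on $M^2$ is immediate from the trace theorem; coercivity on $M^2$ is the nontrivial step and rests on the Korn--Poincar\'e inequality. Under the hypotheses $\mu>0$ and $2\mu+3\nu>0$ the symmetric part $2\mu|\mathbb D(\we)|^2+\nu\dyw^2\we$ is pointwise coercive in $|\nabla\we|^2$ up to a constant (after diagonalising the quadratic form in $\nabla\we$). Combined with $\we\cdot\norm=0$ on $\brzeg$ and either $f\not\equiv 0$ or the orthogonality to rigid rotations built into $M^2$ in the symmetric case, this kills the kernel consisting of the rigid motions preserving $\Omega$, so Korn--Poincar\'e yields $a(\we,\we)\geq c\|\we\|_{1,2}^2$. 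Lax--Milgram then produces a unique $\we\in M^2$ with $\|\we\|_{1,2}\leq C\|\mathbf f\|_{(M^2)^*}$.

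For general $1<p<\infty$ and $\mathbf f\in L^p(\Omega)$ I would invoke the Agmon--Douglis--Nirenberg theory \cite{adn}. The Lam\'e operator $-\mu\Delta-(\mu+\nu)\nabla\dyw$ is properly elliptic, and the slip boundary conditions $\we\cdot\norm=0$, $2\mu\norm\cdot\mathbb D(\we)\cdot\sty+f\we\cdot\sty=0$ satisfy the Lopatinskii--Shapiro complementing condition; this is a classical computation in local coordinates which is unaffected by the lower-order friction term. Hence the ADN $W^{2,p}$ a priori estimate
\[
\|\we\|_{2,p}\leq C(p)\bigl(\|\mathbf f\|_p+\|\we\|_p\bigr)
\]
holds, and combining it with the uniqueness obtained in the $L^2$ step removes the lower-order term. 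For $\mathbf f\in(M^p)^*$ with $p\geq 2$ one solves the problem with a smooth approximating sequence $\mathbf f_k\to\mathbf f$ in $(M^p)^*$ and obtains the estimate $\|\we\|_{1,p}\leq C(p)\|\mathbf f\|_{(M^p)^*}$ by duality: testing the equation against solutions of the adjoint problem in $M^{p'}\hookrightarrow(M^p)^*$ and using the $W^{2,p'}$ bound for the adjoint. The case $1<p<2$ is then reached by a duality argument between $M^p$ and $M^{p'}$.

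The main obstacle, in my view, is the verification of the Lopatinskii--Shapiro condition for the coupled slip system, which requires explicitly solving a $2\times 2$ ODE system on a half-line for each non-zero tangential frequency; everything else is bookkeeping around standard elliptic regularity and duality.
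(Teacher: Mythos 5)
The paper gives no proof of this theorem at all; it simply states that the result relies on ellipticity of the Lam\'e problem and points to \cite{ns, mp}. Your proposal spells out precisely the standard argument those references use: Lax--Milgram with Korn--Poincar\'e for $p=2$, then Agmon--Douglis--Nirenberg $W^{2,p}$ estimates via the complementing condition for the slip boundary operator, then transposition/duality for the $W^{1,p}$ statement. Both the decomposition of the problem and the observation that the pressure term drops from the tangential stress match what the paper relies on. In that sense your proposal is a faithful and more explicit version of the intended proof, not a different route.

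One concrete slip to flag: the claim that $2\mu|\mathbb D(\we)|^2 + \nu\,\dyw^2\we$ is \emph{pointwise} coercive in $|\nabla\we|^2$ is false --- a rigid rotation has $\mathbb D(\we)=0$ and $\dyw\we=0$ but $\nabla\we\neq 0$, so no pointwise inequality of the form $2\mu|\mathbb D(\we)|^2+\nu(\dyw\we)^2\geq c|\nabla\we|^2$ can hold. What you actually have, from $\mu>0$ and $2\mu+2\nu>0$ (implied in 2D by $2\mu+3\nu>0$), is pointwise coercivity in $|\mathbb D(\we)|^2$; the passage from there to $|\nabla\we|^2$ (and then to the full $W^{1,2}$ norm) is precisely the content of Korn's inequality plus the exclusion of the rigid-rotation kernel through the boundary condition, the friction term, or the orthogonality condition built into $M^2$. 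You do invoke Korn--Poincar\'e in the next sentence, so the overall coercivity argument goes through, but the pointwise claim should be corrected to $|\mathbb D(\we)|^2$ lest a reader conclude Korn is superfluous. The remaining sketched steps (Fredholm removal of the lower-order $\|\we\|_p$ term via the $p=2$ uniqueness, duality for $\mathbf f\in(M^p)^*$, and the Lopatinskii--Shapiro verification in local coordinates) are the standard bookkeeping; they are compressed but not wrong.
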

The proof of this theorem relies on ellipticity of the Lam\'e problem \cite{ns, mp}. 

\begin{proof}[Proof of Theorem \ref{aprtw}] 
Let $\mathbf f$ be the r.\,h.\,s.\;of \eqref{deft1}. For any $\ve \in M^\infty$, by Lemma \ref{stwrhoapr},
\begin{equation}
\|\mathbf f \|_p \leq C(p,\varepsilon, n_2, m_2, \|\ve\|_{1, \infty}) \quad \forall \, 1 \leq p < \infty
\end{equation} 
and due to regularity of the Lam\'e problem (\ref{deft1}-\ref{deft3}), 
\begin{equation} 
\|\mathcal T (\ve) \|_{2, p} \leq C(p,\varepsilon, n_2, m_2, \|\ve\|_{1, \infty}) 
\end{equation} 
which proves (as $S$ is continuous) that $\mathcal T$ is well defined and compact. 

We now estimate the norm of $\ve \in M^\infty$ that satisfy 
\begin{equation} 
\ve = t \mathcal T (\ve), \quad t \in [0,1], 
\label{ls}
\end{equation} 
that is
\begin{multline}
- \mu \Delta \ve - (\mu + \nu) \nabla \dyw \ve = - \tfrac{1}{2} t \dyw (K(\varrho) \varrho \ve \otimes \ve) - \tfrac{1}{2}t K(\varrho) \varrho \ve \cdot \nabla \ve - t \nabla P(\varrho) + t (K(\varrho) \varrho \fr + \vfr) =: \mathbf f_t, \\
\varrho = S(\ve) \quad \text{in } \Omega, 
\label{ls2}
\end{multline}
\begin{equation}
\ve \cdot \norm = 0 \quad \text{on } \brzeg, 
\end{equation}
\begin{equation} 
\norm \cdot \mathbb{T}(\ve, P(\varrho)) \cdot \sty + f\ve \cdot \sty = 0 \quad \text{on } \partial \Omega.  
\end{equation} 
Testing \eqref{ls} with velocity, 
\begin{multline} 
2\mu \intO |\mathbb{D}(\ve)|^2 \dd x + \nu \intO |\dyw \ve|^2 \dd x + \int_{\partial \Omega} f |\ve \cdot \mathbf{\boldsymbol \tau}|^2 \dd S - \tfrac{1}{2} t \intO K(\varrho) \varrho (\ve \cdot \nabla \ve) \cdot \ve  \dd x \\+ \tfrac{1}{2} t \intO K(\varrho) \varrho \ve \otimes \ve \colon \nabla \ve \dd x + t \intO \ve \cdot \nabla P(\varrho) \dd x = t \intO (K(\varrho) \varrho \fr + \vfr) \cdot \ve \dd x.
\end{multline} 
By the approximate continuity equation \eqref{aprmasa} satisfied by $\varrho = S(\ve)$, 
\begin{multline} 
\intO \ve \cdot \nabla P(\varrho) \dd x = \intO \ve \cdot \nabla \varrho K(\varrho)\pi'(\varrho)\dd x = \intO K(\varrho) \varrho \ve \cdot \nabla \int_{\varrho_0}^\varrho {\pi'(\xi) \over \xi} \dd \xi \dd x \\
= \varepsilon \intO (\varrho - h  - \Delta \varrho) \int_{\varrho_0}^\varrho {\pi'(\xi) \over \xi} \dd \xi \dd x = \varepsilon \intO  (\varrho - h) \int_{\varrho_0}^\varrho {\pi'(\xi) \over \xi} \dd \xi \dd x + \varepsilon \intO {\pi'(\varrho) \over \varrho}|\nabla \varrho|^2 \dd x 
\end{multline}
Consequently, as $(\ve \cdot \nabla \ve) \cdot \ve = \ve \otimes \ve \colon \nabla \ve$, 
\begin{multline}
2 \mu \intO |\mathbb{D}(\ve)|^2 \dd x + \nu \intO |\dyw \ve|^2 \dd x + \int_{\partial \Omega} f |\ve \cdot \mathbf{\boldsymbol \tau}|^2 \dd S + t \varepsilon \intO {\pi'(\varrho) \over \varrho}|\nabla \varrho|^2 \dd x\\ 
= - t \varepsilon \intO  (\varrho - h) \int_{\varrho_0}^\varrho {\pi'(\xi) \over \xi} \dd \xi \dd x + t \intO (K(\varrho) \varrho \fr + \vfr) \cdot \ve \dd x
\label{testv}
\end{multline}
and therefore, by comparison of signs of $(\varrho - h)$ and $\int_{\varrho_0}^\varrho {\pi'(\xi) \over \xi}$, 
\begin{multline}
2 \mu \intO |\mathbb{D}(\ve)|^2 \dd x + \nu \intO |\dyw \ve|^2 \dd x + \int_{\partial \Omega} f |\ve \cdot \mathbf{\boldsymbol \tau}|^2 \dd S\\ \leq t C(\varepsilon, n_2, m_2)(1 +  \intO (K(\varrho) \varrho \fr + \vfr) \cdot \ve \dd x) .
\end{multline}
We obtain estimate
\begin{equation}
\|\ve\|_{1,2} \leq C(\varepsilon, n_2, m_2) 
\label{oszaprv}
\end{equation} independent of $t\in [0,1]$. In the case $f=0$ we proceed as in the proof of Proposition \ref{apriori}. 

Using the fact that $\varrho$ is a solution to the Neumann problem (\ref{neumann1}, \ref{neumann2}) with 
\begin{equation} 
\mathbf b = - \mathcal B (\varrho - h) - \tfrac{1}{\varepsilon} K(\varrho) \varrho \ve
\end{equation} 
we obtain from \eqref{oszaprv} and bound \eqref{aprrhoogr}
\begin{equation} 
\|\nabla \varrho\|_p \leq C(p, \varepsilon, n_2, m_2)
\label{costam}
\end{equation} 
and consequently
\begin{equation} 
\|\nabla P_+(\varrho)\|_p + \|\nabla P_-(\varrho)  \|_p \leq C(p, \varepsilon, n_2, m_2)  
\label{oszp+-}
\end{equation} 
for every $1 \leq p < \infty$. From (\ref{oszaprv}, \ref{oszp+-}) we obtain an estimate on $\|\mathbf f_t \|_{3 \over 2}$ independent of $t$. Due to regularity for the Lam\'e problem we have estimate on $\|\ve \|_{2, {3 \over 2}}$ which, by embeddings $W^{2,{3 \over 2}}(\Omega) \subset W^{1,6}(\Omega) \subset L^\infty(\Omega)$, gives bound on the norm of $\mathbf f_t$ in $L^6(\Omega)$. This in turn implies
\begin{equation}
\|\ve \|_{1, \infty} \leq C \|\ve \|_{2, 6} \leq C(\varepsilon, n_2, m_2) 
\end{equation} 
which means that $\mathcal T$ satisfies the assumptions of the Leray-Schauder theorem and finishes the proof of \ref{aprtw}. 
\end{proof}

\subsection{Estimates independent of $\varepsilon$} 
In this section we prove a version of Proposition \ref{energ} for solutions to the approximate system (\ref{aprped}-\ref{aprpnorm}). 
\begin{stw}
There exists a constant $L$ independent of $\varepsilon$, $n_1$, $n_2$, $m_1$ and $m_2$ (provided that $m_1$, $n_1$ are sufficiently large) such that for any weak solution $(\varrho, \ve)$ to (\ref{aprped}-\ref{aprpnorm}),
\begin{equation} 
\|\ve\|_{1, 2} + \|P_+(\varrho)\|_2 + \|P_-(\varrho)\|_2 \leq L. 
\end{equation}
\label{apriori}
\end{stw}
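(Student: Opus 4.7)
The plan is to emulate the a priori proof of Proposition \ref{energ}, carefully ensuring that each constant remains independent of the approximation parameters $\varepsilon, n_1, n_2, m_1, m_2$ (for $m_1, n_1$ large enough). Write $F(\varrho)=\int_{\varrho_0}^\varrho \pi'(\xi)/\xi\,\dd\xi$. The main obstacle is preserving this uniformity; it is resolved by (i) the sign structure of $(\varrho-h)F(\varrho)$ in the energy identity \eqref{testv}, and (ii) a combined growth/Chebyshev argument yielding a uniform $L^{2\gamma}$-bound on $\varrho$ despite the cutoff $K$.

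\textbf{Step 1 (velocity estimate).} Test \eqref{aprped} with $\ve$. Using \eqref{aprmasa} to compute $\intO\ve\cdot\nabla P(\varrho)\,\dd x$ reproduces the identity \eqref{testv}. The only unfamiliar term is $-\varepsilon\intO(\varrho-h)F(\varrho)\,\dd x$; since $(\varrho-h)F(\varrho)\geq 0$ outside $[\varrho_0,h]$, its positive part is supported in this bounded interval and majorized pointwise by $(h-\varrho_0)F(h)$, yielding an $O(\varepsilon)$ bound depending only on $\pi,\varrho_0,h,|\Omega|$; the negative part stays on the LHS. Combined with Korn--Poincar\'e (the rotational kernel excluded by the definition of $M^2$ when $f=0$) and H\"older applied to the force term as in \eqref{oszv},
\[
\|\ve\|_{1,2}\leq C(\delta)\bigl(1+\|P_+(\varrho)\|_2^{\delta/\gamma}\bigr),\qquad 0<\delta<1,
\]
provided the uniform bound $\|\varrho\|_{2\gamma}\leq C(1+\|P_+(\varrho)\|_2^{1/\gamma})$ established in Step 2 holds.

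\textbf{Step 2 (bound on $\|P_+(\varrho)\|_2$).} Let $\boldsymbol\psi_+=\mathcal{B}(P_+(\varrho)-|\Omega|^{-1}\intO P_+(\varrho)\,\dd x)$, so $\|\boldsymbol\psi_+\|_{1,2}\leq C\|P_+(\varrho)\|_2$. Since $\varrho_0$ is the unique zero of $\pi$, the supports of $P_+$ and $P_-$ are disjoint, so $P_+P_-=0$ pointwise; testing \eqref{aprped} with $\boldsymbol\psi_+$ reproduces the rearrangement \eqref{testpi+2} in the $P_\pm$-variables. The modified convection, after integration by parts (justified by $\boldsymbol\psi_+|_{\brzeg}=0$), contributes $-\tfrac12\intO K(\varrho)\varrho\ve\otimes\ve:\nabla\boldsymbol\psi_+ + \tfrac12\intO K(\varrho)\varrho\ve\cdot\nabla\ve\cdot\boldsymbol\psi_+$; both pieces together with the viscous, boundary, and force terms are controlled via H\"older and Sobolev as in \eqref{osztestpi+p}--\eqref{osztestpi+k}, while the mean-value term is absorbed by $\tfrac12\|P_+\|_2^2$ via \eqref{oszmasa} and the mass constraint \eqref{aprrhosrednia}. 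The uniform $L^{2\gamma}$-estimate on $\varrho$ splits as follows: on $\{\varrho\leq m_1\}$, where $K\equiv 1$, the growth hypothesis gives $\varrho^\gamma\leq \varrho_0^\gamma+P_+(\varrho)/a$; on $\{\varrho>m_1\}$, monotonicity of $P_+$ yields $|\{\varrho>m_1\}|\leq\|P_+\|_2^2/P_+(m_1)^2$, and since $\varrho\leq m_2=m_1+h$ with $m_2^{2\gamma}/P_+(m_1)^2\to 1/a^2$ as $m_1\to\infty$, this contribution is also $\leq C\|P_+\|_2^2$ once $m_1$ is sufficiently large. Choosing $\delta$ so that all powers of $\|P_+\|_2$ arising on the RHS are strictly below $2$, absorption yields $\|P_+(\varrho)\|_2\leq C$.

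\textbf{Step 3 (bound on $\|P_-(\varrho)\|_2$).} The analogue of \eqref{jensen} is obtained with $Y=\min(\varrho,m_1)$: since $\varrho\leq m_1+h$, $\intO Y\,\dd x\geq h|\Omega|-h\cdot|\{\varrho>m_1\}|\geq h|\Omega|/2$ for $m_1\geq 2h$, and Jensen applied to the convex $t\mapsto t^\gamma$ on $[0,m_1]$ gives $|\Omega|^{-1}\intO Y^\gamma\,\dd x\geq (h/2)^\gamma$. Combined with the pointwise bound $P_+(\varrho)\geq a(Y^\gamma-\varrho_0^\gamma)$ (valid for all $\varrho$) and the choice $\varrho_0\leq h/4$,
\[
\intO P_+(\varrho)\,\dd x \geq a|\Omega|\bigl((h/2)^\gamma-(h/4)^\gamma\bigr) > 0
\]
uniformly. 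The term $|\Omega|^{-1}\intO P_+\cdot\intO P_-$ appearing on the LHS of the analogue of \eqref{testpi+2} then bounds $\intO P_-(\varrho)\,\dd x$ from above, so $\|P_-\|_1\leq C$ by Step 2. Finally, testing with $\boldsymbol\psi_-=\mathcal{B}(P_-(\varrho)-|\Omega|^{-1}\intO P_-(\varrho)\,\dd x)$ and running the same absorption as in Step 2---where now the mean-value contribution $|\Omega|^{-1}(\intO P_-)^2\leq C$ is harmless thanks to the just-established $L^1$-bound---closes the estimate on $\|P_-(\varrho)\|_2$ exactly as in \eqref{testpi-} and the following lines.
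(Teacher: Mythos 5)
Your proposal is correct and follows essentially the same route as the paper's own proof: testing the momentum equation with $\ve$, exploiting the sign of $(\varrho-h)\int_{\varrho_0}^\varrho\pi'(\xi)/\xi\,\dd\xi$ to dispose of the $\varepsilon$-terms, bounding $\|\varrho\|_{2\gamma}$ (the paper bounds $\|K(\varrho)\varrho\|_{2\gamma}$, same mechanism) via the growth condition plus Chebyshev on $\{\varrho>m_1\}$, and then running the Bogovskii tests and Jensen lower bound on $\intO P_+$ exactly as in Proposition \ref{energ}. The small departures (your Chebyshev argument for $|\{\varrho>m_1\}|\le|\Omega|/2$ to feed Jensen, versus the paper's $\min(\varrho,m_1)^\gamma\geq(m_1/m_2)^\gamma\varrho^\gamma$ shortcut, and your remark about $K\equiv1$ on $\{\varrho\leq m_1\}$, which is only true above $1/n_1$ but is harmless since $\pi_+$ vanishes below $\varrho_0>1/n_1$) are cosmetic and do not change the argument.
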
 
In the proof we use the following technical remark. 
\begin{uwaga} 
For any $1 \leq \eta \leq 2 \gamma$,
$$\|K(\varrho) \varrho\|_ \eta\leq C\left(1 + \|P_+(\varrho)\|_2^{2 (\eta-1) \over (2 \gamma - 1)\eta}\right). $$ 
\label{Krho}
\end{uwaga} 
\begin{proof} 

Due to the H\"older interpolation inequality, 
\begin{equation} 
\|K(\varrho) \varrho\|_\eta \leq \|K(\varrho) \varrho\|_1^{1 - \vartheta} \|K(\varrho)\varrho \|_{2 \gamma}^\vartheta,  
\end{equation}
where $\vartheta = {2 \gamma (\eta-1) \over (2 \gamma - 1)\eta}$. Due to properties of $K(\cdot)$ and growth condition \eqref{wzrost2}, 
\begin{equation}
\intO (K(\varrho)\varrho)^{2\gamma} \dd x \leq \int_{\{\varrho \leq m_1\}} \varrho^{2\gamma} \dd x + \tfrac{m_2^{2\gamma}}{m_1^{2\gamma}}\int_{\{\varrho > m_1\}} m_1^{2\gamma} \dd x \leq C\left(1 + \intO P_+(\varrho)^2 \dd x\right)
\end{equation} 
and consequently 
\begin{equation} 
\|K(\varrho) \varrho\|_ \eta \leq C \left(1 + \| P_+(\varrho) \|_2^{\vartheta \over \gamma}\right) . 
\end{equation} 
\end{proof} 

We now proceed to the proof of Proposition \ref{apriori}. 
\begin{proof}[Proof of Proposition \ref{apriori}] 
Estimating more precisely the r.\,h.\,s.\;of \eqref{testv} for $t=1$ (notice that the first expression on the r.\,h.\,s.\;may be positive only on the set $\{\varrho_0 \leq \varrho \leq h\}$),  
\begin{equation} 
2 \mu \intO |\mathbb{D}(\ve)|^2 \dd x + \nu \intO |\dyw \ve|^2 \dd x + \int_{\partial \Omega} f |\ve \cdot \mathbf{\boldsymbol \tau}|^2 \dd S  \leq C(\eta)(1 + \|\ve\|_{1 , 2} \| K(\varrho) \varrho \|_\eta)
\end{equation}
for $\eta >1$. Remark \ref{Krho} yields 
\begin{equation} 
2 \mu \intO |\mathbb{D}(\ve)|^2 \dd x + \nu \intO |\dyw \ve|^2 \dd x + \int_{\partial \Omega} f |\ve \cdot \mathbf{\boldsymbol \tau}|^2 \dd S  \leq C(\delta)(1  + \|P_+(\varrho) \|_2^\delta)
\label{testvP}
\end{equation}
for arbitrarily small $\delta > 0$. 

Next we test \eqref{aprped} with $\mathcal B \left(P_\pm(\varrho) - {1 \over |\Omega|} \intO P_\pm(\varrho) \dd x \right)$ in order to control $\|P_\pm(\varrho)\|_2$. With Remark \ref{Krho} and estimate \eqref{testvP} at our disposal we may proceed as in the proof of Proposition \ref{energ} and obtain the assertion. Here we only present in detail the estimate on ${1 \over |\Omega|} \intO P_+(\varrho) \dd x$ from below (needed to control $\intO P_-(\varrho) \dd x$):  
\begin{equation} 
\tfrac{1}{|\Omega|}\intO P_+(\varrho) \dd x \geq \tfrac{a}{|\Omega|} \intO (\min(\varrho, m_1)^\gamma - \varrho_0^\gamma)\dd x \geq a \left(\tfrac{m_1^\gamma}{m_2^\gamma}\tfrac{1}{|\Omega|}\intO \varrho^\gamma \dd x - \varrho_0^\gamma \right) \geq a(\tfrac{1}{2^\gamma} - \tfrac{1}{4^\gamma})h^\gamma > 0 . 
\end{equation} 
\end{proof} 

Using regularity of the Lam\'e system (Theorem \ref{lame}) we may obtain better estimates on the gradient of $\ve$. 
\begin{wniosek} 
Let $(\varrho, \ve)$ be a solution to the system (\ref{aprped}-\ref{aprpnorm}). Then, for any $q>2$, 
$$\| \ve \|_{1 ,q} \leq C(L)\left(1 + \pi_+(m_2)^{1-{2 \over q}} + \pi_-(\tfrac{1}{n_2})^{1 - {2 \over q}}\right) .$$
\label{wniosek}
\end{wniosek}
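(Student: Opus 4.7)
The plan is to view the momentum equation~\eqref{aprped} as a Lam\'e system for $\ve$ with right-hand side
\begin{equation*}
\mathbf{f} = -\tfrac{1}{2} \dyw(K(\varrho) \varrho \ve \otimes \ve) - \tfrac{1}{2} K(\varrho) \varrho \ve \cdot \nabla \ve - \nabla P(\varrho) + K(\varrho) \varrho \fr + \vfr,
\end{equation*}
and to apply Theorem~\ref{lame} to obtain $\|\ve\|_{1,q} \leq C(q) \|\mathbf{f}\|_{(M^q)^*}$. The task then reduces to estimating the action of each summand of $\mathbf{f}$ on an arbitrary test field $\we \in M^q$ with $\|\we\|_{1,q} \leq 1$.

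For the convective and body-force contributions I would combine H\"older's inequality with four ingredients: Remark~\ref{Krho} to bound $\|K(\varrho) \varrho\|_s$ by $C(L)$ for $1 \leq s \leq 2\gamma$; Proposition~\ref{apriori}, which gives $\|\ve\|_{1,2} \leq L$; the two-dimensional Sobolev embedding $W^{1,2}(\Omega) \hookrightarrow L^p(\Omega)$ for all $p < \infty$ (applied to powers of $\ve$); and Morrey's embedding $W^{1,q}(\Omega) \hookrightarrow L^\infty(\Omega)$ for $q > 2$ to control $\we$. With indices chosen compatibly, each of these contributions to $\|\mathbf{f}\|_{(M^q)^*}$ is bounded by $C(L)$, independently of the cut-off parameters $n_1, n_2, m_1, m_2$.

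The pressure term carries the whole $\pi_\pm$-dependence. The boundary condition $\we \cdot \norm = 0$ kills the boundary contribution in the integration by parts, leaving
\begin{equation*}
\left| \intO \nabla P(\varrho) \cdot \we \dd x \right| = \left| \intO P(\varrho) \dyw \we \dd x \right| \leq \|P(\varrho)\|_q \|\dyw \we\|_{q'} \leq C(q) \|P(\varrho)\|_q \|\we\|_{1,q},
\end{equation*}
where in the last step I use $q' < 2 < q$ together with boundedness of $\Omega$ to dominate $\|\dyw \we\|_{q'}$ by $C\|\nabla \we\|_q$. Log-convexity of $L^p$-norms then yields $\|P(\varrho)\|_q \leq \|P(\varrho)\|_2^{2/q} \|P(\varrho)\|_\infty^{1-2/q}$; Proposition~\ref{apriori} provides $\|P(\varrho)\|_2 \leq L$, while the definition \eqref{aprp} combined with the support properties of $K$ gives $\|P_+\|_\infty \leq \pi_+(m_2)$ and $\|P_-\|_\infty \leq \pi_-(\tfrac{1}{n_2})$. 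Combining everything with the elementary inequality $(a+b)^\theta \leq a^\theta + b^\theta$ for $0 < \theta < 1$ produces the asserted bound.

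The main obstacle is really bookkeeping in the convective terms, where the various H\"older exponents must be chosen so that one factor falls in the range of Remark~\ref{Krho} while the remaining $\ve$-factors stay controlled by the uniform $W^{1,2}$-bound; the pressure estimate itself is routine once one notices that the cancellation of the boundary integral makes $\|P(\varrho)\|_q$, rather than any norm of $\nabla P(\varrho)$, the quantity that enters, and that only the two endpoints $L^2$ and $L^\infty$ of the interpolation scale are actually needed.
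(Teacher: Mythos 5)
Your argument is correct and follows the same route as the paper: apply the Lam\'e regularity estimate (Theorem \ref{lame}) to \eqref{aprped}, bound the convective and force terms via Remark \ref{Krho} together with the uniform $W^{1,2}$-bound from Proposition \ref{apriori}, and treat the pressure term by $L^2$--$L^\infty$ interpolation of $P_\pm(\varrho)$. (Your bookkeeping for the convective terms, pairing directly against $\we$ rather than passing through $L^{2q/(2+q)}\subset W^{-1,q}$, is in fact slightly sharper — it gives a $C(L)$ bound there with no $\pi_+(m_2)$ factor — but this changes nothing in the final estimate.)
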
 
\begin{proof} 
The proof relies on application of Lemma \ref{lame} to the system (\ref{aprped}, \ref{aprvnorm}, \ref{aprposlizg}). It implies, due to continuity of embedding $L^{2q \over 2 + q} \subset W^{-1, q}$ for any $q >2$,  
\begin{multline} 
\|\ve \| _{1,q} \leq C\left(\| K(\varrho) \varrho \ve \otimes \ve \|_q + \|K(\varrho) \varrho \ve \cdot \nabla \ve\|_{2q \over 2+q} +\|P_+(\varrho) \| _q + \|P_-(\varrho) \| _q + \|K(\varrho) \varrho \fr + \vfr\|_{2q \over 2+q}\right)
\end{multline}
Estimating respective terms of the above inequality using Proposition \ref{apriori} and Remark \ref{Krho} yields 
\begin{multline*}
\| K(\varrho) \varrho \ve \otimes \ve \|_q + \|K(\varrho) \varrho \ve \cdot \nabla \ve\|_{2q \over 2+q} \leq C \|K(\varrho) \varrho\|_{q + 2 \gamma - 2} \|\ve \|^2_{1, 2} \leq C \|\ve \|^2_{1, 2} \|K(\varrho) \varrho\|_{2 \gamma}^{2 \gamma \over q + 2 \gamma - 2} C m_2^{q - 2 \over q + 2 \gamma - 2}\\ \leq C(L)\left(1 + \pi_+(m_2)^{q-2 \over \gamma(q + 2 \gamma -2)}\right) \leq C(L)\left(1 + \pi_+(m_2)^{1-{2 \over q}}\right), 
\end{multline*}

$$\|P_+(\varrho)\|_q \leq \| P_+(\varrho)\|_2^{2 \over q} \| P_+(\varrho)\|_\infty ^{1 - {2 \over q}} \leq C(L) \pi_+(m_2)^{1 - {2 \over q}}, $$
$$\|P_-(\varrho)\|_q \leq \| P_-(\varrho)\|_2^{2 \over q} \| P_-(\varrho)\|_\infty ^{1 - {2 \over q}} \leq C(L) \pi_-(\tfrac{1}{n_2})^{1 - {2 \over q}}, $$
$$\|K(\varrho) \varrho \fr + \vfr\|_{2q \over 2+q} \leq  C(L),$$ 
which finishes the proof. 
\end{proof} 

We notice following simple statement concerning the speed of blow-up of $\|\nabla \varrho \|_2$. 
\begin{stw} 
Let $(\varrho, \ve)$ be a solution to the system (\ref{aprped}-\ref{aprpnorm}). Then 
\begin{equation} 
\sqrt \varepsilon \| \nabla \varrho \|_2 \leq C(L, m_2). 
\end{equation}
\label{rhoepsilon} 
\end{stw}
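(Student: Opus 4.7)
The plan is to test the approximate continuity equation \eqref{aprmasa} against $\varrho$ itself. Using the Neumann boundary condition \eqref{aprpnorm}, integration by parts of the diffusive term yields $\varepsilon \intO \Delta \varrho \cdot \varrho \dd x = -\varepsilon \intO |\nabla \varrho|^2 \dd x$, which is precisely the quantity to be estimated. For the convective side I would introduce the auxiliary function $H(\varrho) := \int_0^\varrho K(t) t \dd t$, so that $K(\varrho) \varrho \nabla \varrho = \nabla H(\varrho)$, and, using \eqref{aprvnorm}, integrate by parts twice to obtain
$$\intO \dyw (K(\varrho)\varrho \ve)\, \varrho \dd x = -\intO K(\varrho)\varrho\, \ve \cdot \nabla \varrho \dd x = \intO H(\varrho)\, \dyw \ve \dd x.$$

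Combining these computations yields the identity
$$\varepsilon \intO |\nabla \varrho|^2 \dd x = -\intO H(\varrho)\, \dyw \ve \dd x - \varepsilon \intO (\varrho - h)\, \varrho \dd x.$$
The decisive observation is that $K$ takes values in $[0,1]$ and is supported in $[\tfrac{1}{n_2}, m_2]$, so $|H(\varrho)| \leq \tfrac{1}{2} m_2^2$ pointwise, and similarly $|(\varrho - h)\varrho| \leq C(m_2)$ by \eqref{aprrhoogr}. Applying Hölder on the first term and invoking Proposition \ref{apriori}, which controls $\|\ve\|_{1,2} \leq L$ uniformly in $\varepsilon$, the right-hand side is bounded by $C(L, m_2)$ (the second term carries an extra factor of $\varepsilon$, which is harmless). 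This gives $\varepsilon \|\nabla \varrho\|_2^2 \leq C(L, m_2)$, which is the claim after taking a square root.

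I do not expect any serious obstacle: this is the standard $L^2$-energy test of the regularised continuity equation, and the cut-off function $K$ even shields us from any unbounded behaviour of $\varrho$ (both $H(\varrho)$ and $(\varrho-h)\varrho$ are pointwise dominated by constants depending only on $m_2$ thanks to \eqref{aprrhoogr}). The whole point of the estimate lies in the $\varepsilon$-uniform bound on $\|\ve\|_{1,2}$ already furnished by Proposition \ref{apriori}; without it, one could only obtain a bound blowing up with $\varepsilon^{-1}$.
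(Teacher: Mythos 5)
Your proof is correct and follows exactly the same route as the paper: test the approximate continuity equation \eqref{aprmasa} with $\varrho$, integrate by parts using the boundary conditions \eqref{aprvnorm} and \eqref{aprpnorm}, rewrite the convective term via the primitive $\int_0^\varrho K(t)t\,\dd t$ (which is pointwise bounded by $C(m_2)$ thanks to the cut-off and \eqref{aprrhoogr}), and close with the $\varepsilon$-uniform bound $\|\ve\|_{1,2}\leq L$ from Proposition \ref{apriori}. The argument is essentially word-for-word the one given in the paper.
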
 
\begin{proof} 
Multiplying the equation \eqref{aprped} by $\varrho$ and integrating over $\Omega$ yields 
\begin{equation} 
\varepsilon \intO |\nabla \varrho |^2 \dd x = \varepsilon h \intO K(\varrho) \varrho \dd x - \varepsilon \intO \varrho^2 \dd x - \intO K(\varrho) \varrho \ve \cdot \nabla \varrho \dd x. 
\end{equation} 
It suffices to estimate the last term on the r.\,h.\,s.\; 
\begin{multline} 
\intO K(\varrho) \varrho \ve \cdot \nabla \varrho \dd x = \intO \ve \cdot \nabla \left( \int_0^\varrho K(t) t \dd t \right) \dd x = - \intO \dyw \ve \left( \int_0^\varrho K(t) t \dd t \right) \dd x \leq C(L, m_2) .
\end{multline} 
\end{proof} 

In order to pass to the limit we need estimates on vorticity $\omega$ defined by 
\begin{equation} 
\omega = \rot \ve = {\partial \ve_2 \over \partial x_1} - {\partial \ve_1 \over \partial x_2}.
\end{equation} 
This is the moment where slip boundary condition \eqref{slizg} is essential, as it guarantees well-posedness of the following problem for vorticity which holds in weak sense
\begin{equation} 
- \mu \Delta \omega = - \tfrac{1}{2} \rot \dyw ((K(\varrho) \varrho) \ve \otimes \ve ) \\ - \tfrac{1}{2} \rot (K(\varrho) \varrho \ve \cdot \nabla \ve) + \rot (K(\varrho) \varrho \fr + \vfr) \quad \text{in } \Omega, 
\label{omegaapr1}
\end{equation} 
\begin{equation} 
\omega = (2 \chi - {f \over \mu} ) \ve \cdot \boldsymbol \tau \quad \text{on } \brzeg, 
\label{omegaapr2}
\end{equation} 
where $\chi$ is the curvature of $\brzeg$ (see \cite{mr} for derivation of boundary condition \eqref{omegaapr2} from (\ref{sciany}, \ref{slizg})). Representing vorticity as sum $\omega = \omega_1 + \omega_2$, where 
\begin{equation} 
-\mu \Delta \omega_1 = -\tfrac{1}{2} \rot \dyw ((K(\varrho) \varrho) \ve \otimes \ve ) \quad \text{in } \Omega, 
\label{omega11}
\end{equation} 
\begin{equation} 
\omega_1 = 0 \quad \text{on } \brzeg, 
\label{omega12}
\end{equation} 
\begin{equation} 
- \mu \Delta \omega_2 =- \tfrac{1}{2} \rot (K(\varrho) \varrho \ve \cdot \nabla \ve)
 + \rot (K(\varrho) \varrho \fr ) \quad \text{in } \Omega,
\label{omega21}
\end{equation}
\begin{equation} 
\omega_2  = (2 \chi - {f \over \mu} ) \ve \cdot \boldsymbol \tau \quad \text{on } \brzeg, 
\label{omega22}
\end{equation} 
we obtain the following assertion. 
\begin{stw} 
Let $\omega_1$, $\omega_2$ as in (\ref{omega11}-\ref{omega22}). Then 
$$\|\omega_1\|_q \leq C(n_2, m_2, q), \quad \|\omega_2 \|_{1 , q} \leq C(n_2, m_2, q) \quad \text{ for all } 1 \leq q < \infty .$$ 
\label{stwomega}
\end{stw}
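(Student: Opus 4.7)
The plan is to invoke standard $L^q$ and $W^{1,q}$ elliptic theory for the Dirichlet Laplacian on the $C^2$ domain $\Omega$, combined with the pointwise bound $\tfrac{1}{n_2}\leq\varrho\leq m_2$ from \eqref{aprrhoogr} and Corollary \ref{wniosek}. Since $K$ takes values in $[0,1]$, we have $0\leq K(\varrho)\varrho\leq m_2$ pointwise. Corollary \ref{wniosek} provides, for every $1\leq r<\infty$, a bound $\|\ve\|_{1,r}\leq C(n_2,m_2,r)$; by the 2D Sobolev embedding $W^{1,r}(\Omega)\hookrightarrow L^\infty(\Omega)$ for $r>2$, we also obtain $\|\ve\|_\infty\leq C(n_2,m_2)$. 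These two facts make both estimates a matter of elliptic bookkeeping.

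For $\omega_1$ the right-hand side of \eqref{omega11} is of the form $\partial_i\partial_j F_{ij}$ with $F=K(\varrho)\varrho\,\ve\otimes\ve$. The standard duality estimate for the Dirichlet Laplacian (obtained by pairing with the $W^{2,q'}$-solution of an auxiliary Poisson problem) yields
\[
\|\omega_1\|_q \;\leq\; C(q,\mu)\,\|F\|_q \;\leq\; C(q,\mu)\,m_2\,\|\ve\|_{2q}^2,
\]
and Corollary \ref{wniosek} together with Sobolev embedding bounds the right-hand side by $C(n_2,m_2,q)$.

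For $\omega_2$ I would split $\omega_2=\bar\omega_2+\tilde\omega_2$, where $\bar\omega_2$ solves the Dirichlet problem \eqref{omega21} with zero boundary data and $\tilde\omega_2$ is the harmonic extension of $g:=(2\chi-f/\mu)\,\ve\cdot\sty$. The source in \eqref{omega21} is $\rot H$ with $H=-\tfrac12 K(\varrho)\varrho\,\ve\cdot\nabla\ve+K(\varrho)\varrho\,\fr$, so the $W^{1,q}$ theory for a right-hand side in divergence (curl) form gives
\[
\|\bar\omega_2\|_{1,q} \;\leq\; C(q,\mu)\,\|H\|_q \;\leq\; C(q,\mu)\,m_2\bigl(\|\ve\|_{2q}\|\nabla\ve\|_{2q}+\|\fr\|_q\bigr),
\]
which is finite by Corollary \ref{wniosek}. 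For $\tilde\omega_2$, elliptic regularity gives $\|\tilde\omega_2\|_{1,q}\leq C(q)\,\|g\|_{W^{1-1/q,q}(\partial\Omega)}$; since $\chi$ is continuous (as $\partial\Omega\in C^2$) and $f$ is constant, the trace theorem reduces this to $\|\ve\|_{1,q}$, once more controlled by Corollary \ref{wniosek}.

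There is no deep obstacle here; the proof reduces to a careful application of elliptic theory, made possible by (i)~the uniform $L^\infty$-bound $\varrho\leq m_2$ from \eqref{aprrhoogr}, which makes $K(\varrho)\varrho$ a bounded multiplier, and (ii)~the $W^{1,r}$-bound on $\ve$ for every $r<\infty$ from Corollary \ref{wniosek}. The only point requiring mild care is the separation of the boundary data from the interior source in the $\omega_2$-problem and the verification that the boundary coefficient $2\chi-f/\mu$ is bounded, which follows from $\partial\Omega\in C^2$.
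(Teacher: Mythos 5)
Your proposal is correct and fills in precisely the calculation the paper waves at: the published proof says only that the result ``follows from standard elliptic theory (see \cite{mp}, \cite{adn}) and a calculation using estimates from Proposition \ref{apriori} and Corollary \ref{wniosek},'' which is exactly the very-weak $L^q$ estimate for the double-divergence right-hand side of \eqref{omega11}, the $W^{1,q}$ estimate for the divergence-form right-hand side of \eqref{omega21}, and the harmonic-extension/trace argument for the boundary data in \eqref{omega22}, all fed by the uniform $L^\infty$ bound $\varrho\le m_2$ and the $W^{1,r}$ bound on $\ve$ from Corollary \ref{wniosek}. The only minor point worth flagging is that Corollary \ref{wniosek} is stated for $q>2$, so for $1\le r\le2$ you should note that the bound follows from the $q>2$ case via H\"older on the bounded domain $\Omega$, but this is cosmetic.
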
 
\begin{proof} 
The proof follows from standard elliptic theory (see \cite{mp}, \cite{adn}) and a calculation using estimates from Proposition \ref{apriori} and Corollary \ref{wniosek}. 
\end{proof}

\section{Passage to the limit} 
In this chapter we prove the main theorem \ref{thetw} by passing to the limit $\varepsilon \to 0$. We denote by $(\varrho_\varepsilon, \ve_\varepsilon)$ the solution to system (\ref{aprped}-\ref{aprpnorm}) obtained in Theorem \ref{aprtw}. Propositions \ref{apriori}, \ref{wniosek} and \ref{rhoepsilon} from previous chapter give following estimates independent of $\varepsilon$: 
\begin{equation} 
\|P_+(\varrho_\varepsilon) \|_2 + \|P_-(\varrho_\varepsilon)\|_2 + \|\ve_\varepsilon\|_{1, 2} \leq C , \quad \|\varrho_\varepsilon \|_\infty \leq m_2, \quad \| \tfrac{1}{\varrho_\epsilon} \|_\infty \leq n_2, 
\label{oszacowanie1}
\end{equation} 
\begin{equation} 
\sqrt \varepsilon \|\nabla \varrho_\varepsilon\|_2 \leq C(m_2), \quad \|\ve_\varepsilon\|_{1, q} \leq C(n_2, m_2, q) \quad \text{for any } 1 \leq q < \infty .
\label{oszacowanie2} 
\end{equation} 
The Banach-Alaoglu theorem then implies the existence of weak limits of some sequences 
\begin{equation} 
\varrho_{\varepsilon_n}\overset{*} \rightharpoonup \varrho \quad \text{in } L^\infty(\Omega), \qquad \ve_{\varepsilon_n} \rightharpoonup \ve \quad \text{in } W^{1,q}(\Omega). 
\label{przejscierhove} 
\end{equation}
Due to compact embedding $W^{1,q}(\Omega) \subset C(\overline{\Omega})$, $\ve_{\varepsilon_n} \to \ve$ in $C(\overline{\Omega})$ and in any $L^p(\Omega)$, $1 \leq p \leq \infty$. These observations allow us to pass to the limit $\varepsilon \to 0^+$ in the weak formulation of (\ref{aprped}-\ref{aprpnorm}). We denote by bar over nonlinear expression the weak limit (in appropriate space) of a sequence of its approximations. Firstly, we notice  
\begin{uwaga} The equation 
$$\overline{K(\varrho) \varrho} \ve \cdot \nabla \ve = \dyw(\overline{K(\varrho) \varrho} \ve \otimes \ve) = \ve \cdot \overline{K(\varrho)\varrho \nabla \ve}$$ 
holds in weak sense.
\label{uwaga}
\end{uwaga} 
\begin{proof} 
The first equality follows from a calculation involving \eqref{gr2}. Performing similar calculation for approximate solutions yields
\begin{multline} 
\dyw (K(\varrho_{\varepsilon_n}) \varrho_{\varepsilon_n} \ve_{\varepsilon_n} \otimes \ve_{\varepsilon_n}) =  \ve_{\varepsilon_n} \cdot K(\varrho_{\varepsilon_n}) \varrho_{\varepsilon_n} \nabla \ve_{\varepsilon_n} + \dyw(K(\varrho_{\varepsilon_n}) \varrho \ve_{\varepsilon_n}) \ve_{\varepsilon_n} \\
= \ve_{\varepsilon_n} \cdot K(\varrho_{\varepsilon_n}) \varrho_{\varepsilon_n} \nabla \ve_{\varepsilon_n} + \varepsilon_n (h - \varrho_{\varepsilon_n} + \Delta \varrho_{\varepsilon_n}) .
\label{Krhoveve}
\end{multline} 
The assertion follows by passing to the limit in \eqref{Krhoveve}. 
\end{proof} 
By virtue of this remark, $(\varrho, \ve)$ satisfies (in weak sense analogous to Definition \ref{thedef}) the system 
\begin{equation} 
\overline{K(\varrho) \varrho}\ve \cdot \nabla \ve - \mu \Delta \ve - (\mu + \nu) \nabla \dyw \ve 
+ \nabla\overline{P(\varrho)} = \overline{K(\varrho) \varrho} \fr \quad \text{in } \Omega. 
\label{gr1} 
\end{equation}
\begin{equation} 
\dyw(\overline{K(\varrho) \varrho} \ve) = 0 \quad \text{in } \Omega, 
\label{gr2}
\end{equation} 
\begin{equation}
\ve \cdot \norm = 0 \qquad \text{on } \partial \Omega, 
\label{gr3}
\end{equation} 
\begin{equation} 
\norm \cdot \mathbb{T}(\ve, \overline{P(\varrho)}) \cdot \sty + f\ve \cdot \sty = 0 \qquad \text{on } \partial \Omega, 
\label{gr4}
\end{equation} 
where $\overline{P(\varrho)} = \overline{P_+(\varrho)} + \overline{P_-(\varrho)}$. To prove that $(\varrho, \ve)$ is a weak solution to (\ref{ped}-\ref{slizg}) it suffices to show that $\overline{ K(\varrho) \varrho } = K(\varrho) \varrho$ and $\overline{ P(\varrho) } =  \pi(\varrho)$. The estimates from Proposition \ref{odciecieapriori} will be essential in dealing with these limits. To use them, we need to control the $L^\infty$ norm of effective viscous flux $G$. To obtain it we use geometric definition of vector Laplacian of $\ve$ 
\begin{equation} 
\Delta \ve = \nabla \dyw \ve - \nabla^\perp \rot \ve 
\label{veclap}
\end{equation} 
which holds in weak sense for $\ve \in W^{1,2}(\Omega)$. Decomposing $\Delta \ve$ in equation \eqref{gr1} this way yields
\begin{equation} 
\nabla(-(2\mu + \nu) \dyw \ve + \overline{P(\varrho)}) = \mu \nabla^\perp \omega + \overline{K(\varrho) \varrho} \fr - \overline{K(\varrho) \varrho} \ve \cdot \nabla \ve. 
\label{decomp}
\end{equation} 
We denote
\begin{equation} 
\overline{G} : = -(2\mu + \nu) \dyw \ve + \overline{P(\varrho)} .
\label{defGbar}
\end{equation} 

\begin{lemat} 
The following estimates hold
\begin{equation} 
\|\overline G\| _2 \leq C,
\label{g2} 
\end{equation} 
\begin{equation} 
\|\omega\|_{1, q} \leq C(q, n_2, m_2), \quad \|\overline G\|_{1, q} \leq C(q, n_2, m_2) \quad \text{ for any } 1 \leq q < \infty, 
\end{equation}
\begin{equation} 
\|\overline G\| _\infty \leq C(q)\left(1 + \pi_+(m_2)^{1-{2 \over q}} + \pi_-(\tfrac{1}{n_2})^{1 - {2 \over q}}\right)^2 m_2  \quad \text{ for any } 2 < q < \infty. 
\label{ginfty} 
\end{equation} 
\label{ogr} 
\end{lemat}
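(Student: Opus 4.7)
The plan is to work entirely at the level of the limit equations \eqref{gr1}--\eqref{gr4}, exploit the identity \eqref{decomp}, and derive an elliptic problem for $\omega$ by taking $\rot$ of the momentum equation. I will proceed in four stages: (a) bound $\|\overline G\|_2$ directly from \eqref{defGbar}, (b) establish the $W^{1,q}$ bound on $\omega$, (c) deduce the $W^{1,q}$ bound on $\overline G$, (d) conclude \eqref{ginfty} via Sobolev embedding, carefully tracking the dependence of constants on $m_2$ and $n_2$ through $\pi_+(m_2)$ and $\pi_-(\tfrac{1}{n_2})$. The crucial pointwise control is $\|\overline{K(\varrho)\varrho}\|_\infty \leq m_2$, which holds because $K(t)t$ vanishes outside $[\tfrac{1}{n_2},m_2]$ and $K\leq 1$; this is what tames the convective nonlinearity.

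For (a), \eqref{g2} is immediate: $(2\mu+\nu)\|\dyw\ve\|_2$ is controlled by $\|\ve\|_{1,2}\leq L$ (Proposition \ref{apriori}), and $\|\overline{P(\varrho)}\|_2$ follows from weak lower semi-continuity applied to the uniform bound $\|P_\pm(\varrho_\varepsilon)\|_2\leq L$ of the same Proposition. For (b), taking $\rot$ of \eqref{gr1} and using $\rot\nabla = 0$, $\rot\Delta\ve = \Delta\omega$, I obtain that $\omega = \rot\ve$ weakly solves
\begin{equation*}
-\mu\Delta\omega = \rot\bigl(\overline{K(\varrho)\varrho}(\fr - \ve\cdot\nabla\ve) + \vfr\bigr) \text{ in } \Omega, \qquad \omega = \bigl(2\chi - \tfrac{f}{\mu}\bigr)\ve\cdot\sty \text{ on } \brzeg.
\end{equation*}
Using $\rot\mathbf h = \dyw \mathbf h^\perp$, this is a Poisson problem with RHS in divergence form and Dirichlet data in $W^{1-1/q,q}(\brzeg)$, so classical elliptic regularity yields
\begin{equation*}
\|\omega\|_{1,q} \leq C(q)\bigl(\|\overline{K(\varrho)\varrho}(\fr-\ve\cdot\nabla\ve)+\vfr\|_q + \|\ve\|_{1,q}\bigr).
\end{equation*}
Estimating $\|\overline{K(\varrho)\varrho}(\fr-\ve\cdot\nabla\ve)\|_q \leq m_2(\|\fr\|_\infty + \|\ve\|_\infty\|\nabla\ve\|_q)$, bounding $\|\ve\|_\infty$ via Sobolev embedding $W^{1,q}\hookrightarrow C(\overline\Omega)$ for $q>2$, and applying Corollary \ref{wniosek} to both $\|\ve\|_\infty$ and $\|\nabla\ve\|_q$ produces
\begin{equation*}
\|\omega\|_{1,q} \leq C(L,q)\,m_2\,\bigl(1 + \pi_+(m_2)^{1-2/q} + \pi_-(\tfrac{1}{n_2})^{1-2/q}\bigr)^2,
\end{equation*}
which in particular gives the intermediate bound $\|\omega\|_{1,q}\leq C(q,n_2,m_2)$.

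For (c)--(d), identity \eqref{decomp} expresses $\nabla\overline G$ as a linear combination of $\nabla^\perp\omega$, $\overline{K(\varrho)\varrho}\fr$, $\vfr$, and $\overline{K(\varrho)\varrho}\ve\cdot\nabla\ve$, each controlled in $L^q$ by the same quantity as above. Combined with \eqref{g2} and the Poincar\'e-type inequality $\|u\|_q \leq C(\|u\|_2 + \|\nabla u\|_q)$, this yields the $W^{1,q}$ estimate on $\overline G$. The $L^\infty$ bound \eqref{ginfty} then follows from $W^{1,q}\hookrightarrow L^\infty$ valid for $q>2$, with the exact form of the constant emerging from the chain above. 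The main obstacle I anticipate is the justification of passing to the limit $\varepsilon\to 0^+$ in the $\rot$ of the convective term, which is where Remark \ref{uwaga} is needed to identify the weak limit $\overline{K(\varrho)\varrho\ve\cdot\nabla\ve}$ with $\overline{K(\varrho)\varrho}\ve\cdot\nabla\ve$; once this identification is in place, the rest of the argument is essentially elliptic regularity plus careful bookkeeping of constants in terms of $m_2$, $n_2$, $\pi_\pm$.
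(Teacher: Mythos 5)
Your proposal is correct and follows essentially the same route as the paper: bound $\|\overline G\|_2$ directly from \eqref{defGbar}, derive the Dirichlet problem for $\omega$ (with the slip-condition boundary data from \cite{mr}), apply elliptic regularity together with Corollary \ref{wniosek} and $\|\overline{K(\varrho)\varrho}\|_\infty\leq m_2$ to get the $W^{1,q}$ bound on $\omega$, then feed \eqref{decomp} and Morrey's embedding to conclude. The only cosmetic difference is that you take $\rot$ of the limit equation \eqref{gr1} directly, whereas the paper passes to the limit in the approximate vorticity system (\ref{omegaapr1}, \ref{omegaapr2}) — both paths require Remark \ref{uwaga} to identify the weak limit of the convective term, which you correctly flag as the key point.
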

\begin{proof} 
The estimate \eqref{g2} follows from 
\begin{equation} 
\| \overline G \| _2 \leq C(\| \nabla \ve \| _2 + \| \overline{P(\varrho)} \|_2) \leq C(L). 
\end{equation}
Passing to the limit in (\ref{omegaapr1}, \ref{omegaapr2}) using Proposition \ref{stwomega} and Remark \ref{uwaga}, we get 
\begin{equation} 
 \mu \Delta \omega = \rot ( \overline{K(\varrho) \varrho} \ve \cdot \nabla \ve ) + \rot (  \overline{K(\varrho) \varrho} \fr ) \qquad \text{in } \Omega, 
\label{gromega1}
\end{equation}
\begin{equation} 
\omega = (2 \chi - {f \over \mu} ) \ve \cdot \boldsymbol \tau \qquad \text{on } \partial \Omega 
\label{gromega2}
\end{equation}  
in weak sense. For any $q$ we have $\|\overline{K(\varrho) \varrho} \fr \| _q \leq C m_2$. By virtue of Corollary \ref{wniosek}, 
\begin{equation} 
\|\overline{K(\varrho) \varrho} \ve \cdot \nabla \ve \| _q \leq C(q) \| \nabla \ve \| _q ^2 \| \overline{K(\varrho) \varrho}\|_\infty 
\leq C(q) \left(1 + \pi_+(m_2)^{1-{2 \over q}} + \pi_-(\tfrac{1}{n_2})^{1 - {2 \over q}}\right)^2 m_2 
\end{equation} 
for any $q > 2$. Due to ellipticity of (\ref{gromega1}, \ref{gromega2}), 
\begin{multline} 
\| \omega \| _{1, q} \leq C(q) \| \ve \| _{1, q} + C(q) \| \overline{K(\varrho) \varrho} \ve \cdot \nabla \ve \| _q + \|  \overline{K(\varrho) \varrho} \fr \|_q \\ 
\leq C(q)\left(1 + \pi_+(m_2)^{1-{2 \over q}} + \pi_-(\tfrac{1}{n_2})^{1 - {2 \over q}}\right)^2 m_2.
\end{multline} 
By \eqref{decomp}, for any $q > 2$, 
\begin{multline} 
\|\nabla \overline G\| _q \leq \| \nabla ^\perp \omega \| _q + \| \overline{K(\varrho) \varrho} \fr \|_q   + \|\overline{K(\varrho) \varrho} \ve \cdot \nabla \ve \| _q \\ 
\leq C(q)\left(1 + \pi_+(m_2)^{1-{2 \over q}} + \pi_-(\tfrac{1}{n_2})^{1 - {2 \over q}}\right)^2 m_2. 
\end{multline} 
As we control $\|\overline G\|_2$, estimate \eqref{ginfty} follows from the Morrey inequality. 
\end{proof} 

Next, we decompose $\Delta \ve_\varepsilon$ in \eqref{aprped} in analogous way 
\begin{multline} 
\nabla(-(2\mu + \nu) \dyw \ve_\varepsilon + P(\varrho_\varepsilon)) = \mu \nabla^\perp \omega_\varepsilon + K(\varrho_\varepsilon) \varrho_\varepsilon \fr \\ - \tfrac{1}{2} K(\varrho_\varepsilon) \varrho_\varepsilon \ve_\varepsilon \cdot \nabla \ve_\varepsilon - \tfrac{1}{2} \dyw(K(\varrho_\varepsilon) \varrho_\varepsilon \ve_\varepsilon \otimes \ve_\varepsilon) . 
\end{multline} 
We denote 
\begin{equation} 
G_\varepsilon = -(2\mu + \nu) \dyw \ve_\varepsilon + P(\varrho_\varepsilon) . 
\end{equation} 

\begin{lemat} 
For a suitable sequence $\varepsilon_n \to 0^+$, $G_{\varepsilon_n} \to \overline G \text{ in } L^2(\Omega).$ 
\label{zbieznoscg}
\end{lemat}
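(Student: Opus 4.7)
The strategy is to upgrade the weak $L^2$-convergence $G_\varepsilon \slabo \overline G$, which is immediate from $\dyw \ve_\varepsilon \slabo \dyw \ve$ in $L^q$ (Corollary \ref{wniosek} together with \eqref{przejscierhove}) and $P(\varrho_\varepsilon) \slabo \overline{P(\varrho)}$ in $L^2$, to strong convergence by establishing precompactness of $(G_\varepsilon)$ in $L^2(\Omega)$.

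The starting point is the gradient identity for $G_\varepsilon$ obtained from \eqref{aprped} by decomposing $\Delta \ve_\varepsilon$ as in \eqref{veclap}:
\begin{equation*}
\nabla G_\varepsilon = -\mu \nabla^\perp \omega_\varepsilon + K(\varrho_\varepsilon)\varrho_\varepsilon \fr + \vfr - \tfrac{1}{2}K(\varrho_\varepsilon)\varrho_\varepsilon \ve_\varepsilon \cdot \nabla \ve_\varepsilon - \tfrac{1}{2}\dyw(K(\varrho_\varepsilon)\varrho_\varepsilon \ve_\varepsilon \otimes \ve_\varepsilon).
\end{equation*}
The last term is problematic, since $\nabla(K(\varrho_\varepsilon)\varrho_\varepsilon)$ is only controlled as $O(1/\sqrt{\varepsilon})$ in $L^2$ by Proposition \ref{rhoepsilon}. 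I would eliminate it by expanding $\dyw(K(\varrho_\varepsilon)\varrho_\varepsilon \ve_\varepsilon \otimes \ve_\varepsilon) = \dyw(K(\varrho_\varepsilon)\varrho_\varepsilon \ve_\varepsilon)\,\ve_\varepsilon + K(\varrho_\varepsilon)\varrho_\varepsilon \ve_\varepsilon \cdot \nabla \ve_\varepsilon$ and then invoking the approximate continuity equation \eqref{aprmasa} to substitute $\dyw(K(\varrho_\varepsilon)\varrho_\varepsilon \ve_\varepsilon) = \varepsilon(\Delta \varrho_\varepsilon - (\varrho_\varepsilon - h))$, exchanging the unbounded divergence for the $\varepsilon$-small remainder $\tfrac{\varepsilon}{2}(\Delta \varrho_\varepsilon - (\varrho_\varepsilon - h))\ve_\varepsilon$.

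After this substitution, every other summand in $\nabla G_\varepsilon$ is uniformly bounded in $L^q(\Omega)$ for all $q < \infty$: the curl term by Proposition \ref{stwomega}, the convective term by Corollary \ref{wniosek} combined with $K(\varrho_\varepsilon)\varrho_\varepsilon \leq m_2$, and the forcing by $\fr, \vfr \in L^\infty$. For the singular remainder I would use the identity $(\Delta \varrho_\varepsilon)\ve_\varepsilon = \dyw(\ve_\varepsilon \otimes \nabla \varrho_\varepsilon) - (\nabla \ve_\varepsilon)\nabla \varrho_\varepsilon$; multiplied by $\varepsilon$ and combined with $\sqrt{\varepsilon}\|\nabla \varrho_\varepsilon\|_2 \leq C$ and the uniform $L^q$-bounds on $\nabla \ve_\varepsilon$ from Corollary \ref{wniosek}, the two pieces are $O(\sqrt{\varepsilon})$ in $L^2(\Omega)$ and in $L^r(\Omega)$ for some $r < 2$, respectively. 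Taking the divergence of the gradient identity (so that $\nabla^\perp \omega_\varepsilon$ is annihilated) and viewing $G_\varepsilon$ as the solution of a Poisson-type problem $-\Delta G_\varepsilon = \dyw(\mathrm{RHS})$ with boundary data inherited from \eqref{aprvnorm}--\eqref{aprposlizg}, Calder\'on--Zygmund-type elliptic estimates supply a splitting $G_\varepsilon = G_\varepsilon^{(1)} + G_\varepsilon^{(2)}$ with $\|G_\varepsilon^{(1)}\|_{W^{1,q}(\Omega)} \leq C$ for some $q > 2$ uniformly in $\varepsilon$, and $\|G_\varepsilon^{(2)}\|_{L^2(\Omega)} \to 0$. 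By Rellich--Kondrachov a subsequence $(G_{\varepsilon_n})$ converges strongly in $L^2$, and the limit must coincide with the weak $L^2$-limit $\overline G$, concluding the argument.

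The main obstacle, and the reason the proof is more delicate than the proof of Lemma \ref{ogr} for $\overline G$, is precisely the convective divergence $\dyw(K(\varrho_\varepsilon)\varrho_\varepsilon \ve_\varepsilon \otimes \ve_\varepsilon)$: no direct $L^q$ estimate of it is available at finite $\varepsilon$, so the whole argument hinges on using the artificial viscosity in \eqref{aprmasa} to absorb it into an $\varepsilon$-vanishing remainder. Without this trade, one would only recover weak $L^2$-convergence of $G_\varepsilon$, which is insufficient for the subsequent identification of the nonlinear limits $\overline{K(\varrho)\varrho}$ and $\overline{P(\varrho)}$.
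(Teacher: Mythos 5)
Your proposal correctly identifies the essential substitution that makes this lemma possible: trading the uncontrolled divergence $\dyw(K(\varrho_\varepsilon)\varrho_\varepsilon \ve_\varepsilon \otimes \ve_\varepsilon)$ for the $\varepsilon$-small remainder $\varepsilon(\Delta\varrho_\varepsilon - (\varrho_\varepsilon - h))\ve_\varepsilon$ via \eqref{aprmasa}, and using Proposition \ref{rhoepsilon} to show the remainder vanishes in $W^{-1,2}$. This is the same key step the paper uses. However, the argument that follows has two genuine gaps.

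First, you claim the curl term $\mu\nabla^\perp\omega_\varepsilon$ is uniformly bounded in $L^q$ by Proposition \ref{stwomega}; it is not. That proposition only gives $\omega_1 \in L^q$ (not $W^{1,q}$) and $\omega_2 \in W^{1,q}$, so $\nabla^\perp\omega_1$ lives a priori only in $W^{-1,q}$. You then appear to recognize this by applying $\dyw$ to annihilate the curl term, but this leads to the second, more serious gap: once you pass to $-\Delta G_\varepsilon = \dyw(\text{RHS}_\varepsilon)$, the claim that $G_\varepsilon$ inherits boundary data from (\ref{aprvnorm})--(\ref{aprposlizg}) is unjustified. The quantity $G_\varepsilon = -(2\mu+\nu)\dyw\ve_\varepsilon + P(\varrho_\varepsilon)$ involves $\dyw\ve_\varepsilon|_{\partial\Omega}$, which contains the normal derivative $\partial_\norm(\ve_\varepsilon\cdot\norm)$; the slip conditions constrain $\ve_\varepsilon\cdot\norm$ and the mixed component $\norm\cdot\mathbb{D}(\ve_\varepsilon)\cdot\sty$, but give no control on this normal-normal derivative. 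Without a Dirichlet or Neumann datum the Calder\'on--Zygmund machinery yields only interior $W^{1,q}$ estimates, hence precompactness in $L^2_{\mathrm{loc}}(\Omega)$, which does not give $G_{\varepsilon_n}\to\overline{G}$ in $L^2(\Omega)$.

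The paper circumvents both issues by working with the \emph{difference} $\nabla(G_{\varepsilon_n}-\overline{G})$ and proving each term vanishes strongly in $W^{-1,2}(\Omega)$: the forcing and convective differences via weak $L^q$ convergence and the compact embedding $L^2\hookrightarrow W^{-1,2}$, the $\varepsilon$-remainder via \eqref{oszacowanie2}, and crucially the term $\mu\nabla^\perp(\omega_{\varepsilon_n}-\omega)$ by returning to the vorticity elliptic problem (\ref{omegaapr1})--(\ref{omegaapr2}), which \emph{does} carry a usable boundary condition $\omega_{\varepsilon_n}-\omega = (2\chi - f/\mu)(\ve_{\varepsilon_n}-\ve)\cdot\sty$ whose right-hand side tends to zero in $L^2(\partial\Omega)$ by compactness of the trace operator. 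This gives $\|G_{\varepsilon_n}-\overline{G}\|_{L^2(\Omega)/\{\text{constants}\}}\to 0$, after which a direct computation of $\intO(G_{\varepsilon_n}-\overline{G})\dd x$ fixes the constant. If you want to keep your Poisson formulation for $G_\varepsilon$, you would need to produce an explicit boundary datum; the path of least resistance is to adopt the paper's strategy of diverting the boundary difficulty onto $\omega_\varepsilon$, where the slip condition makes it tractable.
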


\begin{proof} 
From definitions of $\overline G$ and $G_\varepsilon$, 
\begin{multline} 
\nabla (G_\varepsilon - \overline G) = \mu \nabla^\perp (\omega_\varepsilon - \omega) + (K(\varrho_\varepsilon) \varrho_\varepsilon - \overline{K(\varrho) \varrho}) \fr \\ - \tfrac{1}{2} K(\varrho_\varepsilon) \varrho_\varepsilon \ve_\varepsilon \cdot \nabla \ve_\varepsilon - \tfrac{1}{2} \dyw(K(\varrho_\varepsilon) \varrho_\varepsilon \ve_\varepsilon \otimes \ve_\varepsilon) + \overline{K(\varrho) \varrho} \ve \cdot \nabla \ve .
\label{gradGG}
\end{multline} 
We now analyse convergence of expressions on the r.\,h.\,s.\;of \eqref{gradGG}. Obviously
\begin{equation} 
(K(\varrho_{\varepsilon_n}) \varrho_{\varepsilon_n} - \overline{K(\varrho) \varrho}) \fr \slabo 0 \quad \text{in } L^q(\Omega), \, 1 \leq q < \infty. 
\end{equation} 
for the sequence $\varepsilon_n$ from \eqref{przejscierhove} and, consequently, strongly in $W^{-1,2}(\Omega)$, due to compact embedding $L^2(\Omega) \subset W^{-1,2}(\Omega)$.  
Next, 
\begin{multline} 
- \tfrac{1}{2} K(\varrho_{\varepsilon_n}) \varrho_{\varepsilon_n} \ve_{\varepsilon_n} \cdot \nabla \ve_{\varepsilon_n} - \tfrac{1}{2} \dyw(K(\varrho_{\varepsilon_n}) \varrho_{\varepsilon_n} \ve_{\varepsilon_n} \otimes \ve_{\varepsilon_n}) + \overline{K(\varrho) \varrho} \ve \cdot \nabla \ve \\ 
=\overline{K(\varrho) \varrho} \ve \cdot \nabla \ve - K(\varrho_{\varepsilon_n}) \varrho_{\varepsilon_n} \ve_{\varepsilon_n} \cdot \nabla \ve_{\varepsilon_n}  - \tfrac{1}{2} \dyw(K(\varrho_{\varepsilon_n}) \varrho_{\varepsilon_n} \ve_{\varepsilon_n})  \ve_{\varepsilon_n} . 
\label{dowodgg1}
\end{multline}
The difference of first two terms on the r.\,h.\,s.\;of \eqref{dowodgg1} converges to $0$ weakly in $L^q(\Omega)$, $1 \leq q < \infty$, and therefore also strongly in $W^{-1,2}(\Omega)$, by virtue of Remark \ref{uwaga}. The third term might be transformed using \eqref{aprmasa}:
\begin{equation} 
\dyw(K(\varrho_{\varepsilon_n}) \varrho_{\varepsilon_n} \ve_{\varepsilon_n})  \ve_{\varepsilon_n} ={\varepsilon_n} \Delta \varrho_{\varepsilon_n} \ve_{\varepsilon_n} + {\varepsilon_n} h \ve_{\varepsilon_n}  - {\varepsilon_n} \varrho_{\varepsilon_n} \ve_{\varepsilon_n} .
\label{dowodgg2}
\end{equation} 
From estimate \eqref{oszacowanie2} it follows that the first term on the r.\,h.\,s.\;of \eqref{dowodgg2} converges to $0$ strongly in $W^{-1,2}(\Omega)$. The remaining two terms, identical with two subsequent terms in \eqref{gradGG}, converge even strongly in $L^q(\Omega)$. 

It remains to examine the first term on the r.\,h.\,s.\;of \eqref{gradGG}. The difference $\omega_{\varepsilon_n} - \omega$ is a weak solution to
\begin{multline} 
\Delta (\omega_{\varepsilon_n} - \omega) =  \rot ((K(\varrho_{\varepsilon_n})\varrho_{\varepsilon_n} - \overline{K(\varrho) \varrho} )\fr) \\ 
- \tfrac{1}{2} \rot (K(\varrho_{\varepsilon_n}) \varrho_{\varepsilon_n} \ve_{\varepsilon_n} \cdot \nabla \ve_{\varepsilon_n}) - \tfrac{1}{2} \rot \dyw (K(\varrho_{\varepsilon_n}) \varrho_{\varepsilon_n} \ve_{\varepsilon_n} \otimes \ve_{\varepsilon_n}) + \rot(\overline{K(\varrho) \varrho} \ve \cdot \nabla \ve ) \quad \text{in } \Omega,
\end{multline} 
\begin{equation}  
\omega_{\varepsilon_n} - \omega = (2 \chi - {f \over \nu}) (\ve_{\varepsilon_n} - \ve )\cdot \boldsymbol \tau \quad \text{on } \brzeg.  
\end{equation}
Therefore
\begin{multline}
\|\nabla^\perp (\omega_{\varepsilon_n} - \omega) \|_{-1, 2} \leq  \|(K(\varrho_{\varepsilon_n})\varrho_{\varepsilon_n} - \overline{K(\varrho) \varrho} )\fr\|_{-1,2} \\
+ \left \| - \tfrac{1}{2} K(\varrho_{\varepsilon_n}) \varrho_{\varepsilon_n} \ve_{\varepsilon_n} \cdot \nabla \ve_{\varepsilon_n} - \tfrac{1}{2} \dyw(K(\varrho_{\varepsilon_n}) \varrho_{\varepsilon_n} \ve_{\varepsilon_n} \otimes \ve_{\varepsilon_n}) + \overline{K(\varrho) \varrho} \ve \cdot \nabla \ve \right \| _{-1, 2}  + C \|\ve_{\varepsilon_n} - \ve\|_{2, \brzeg} .
\end{multline} 
The convergence of the first and second terms of the r.\,h.\,s.\;was shown above. The last term converges to $0$ due to compactness of the trace operator $\mathrm{tr}\colon W^{1,2}(\Omega) \to L^2(\brzeg)$ (\cite{kof}, Theorem 6.10.5). We proved that
\begin{equation} 
\|G_{\varepsilon_n} - \overline G\|_{L^2(\Omega)/\{\text{constants}\}} = \|\nabla (G_{\varepsilon_n} - \overline G) \|_{-1,2}  \to 0.
\label{zbieznoscbezstalej}
\end{equation} 
But
\begin{equation} 
\intO (G_{\varepsilon_n} - \overline G) \dd x = \intO \dyw (\ve_{\varepsilon_n} - \ve) \dd x + \intO(P(\varrho_{\varepsilon_n}) - \overline{P(\varrho)}) \dd x \to 0,
\end{equation} 
since the first integral on the r.\,h.\,s.\;vanishes due to boundary conditions, which combined with \eqref{zbieznoscbezstalej} completes the proof. 
\end{proof}

We are now ready to show separation of $\varrho$ from $0$ and $\infty$. 
\begin{stw} 
For appropriately chosen $m_2$ and $n_2$, there exists a sequence $\varepsilon_n \to 0^+$ such that 
\begin{equation} 
\lim_{\varepsilon_n \to 0^+} |\{ x \in \Omega \colon \varrho_{\varepsilon_n}(x) \notin [\tfrac{1}{n_0}, m_0]\}| = 0 
\end{equation} 
\label{odciecie} 
for some $n_0 < n_1$, $m_0 < m_1$. In fact we have
\begin{equation} 
\tfrac{1}{n_0} \in \pi^{-1}(-\|\overline G\|_\infty), \quad m_0 \in \pi^{-1}(\|\overline G\|_\infty). 
\label{odciecieosz}
\end{equation}
\end{stw}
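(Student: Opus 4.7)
The plan is to reproduce the argument of Proposition \ref{odciecieapriori} at the approximate level, tracking the extra terms $\varepsilon \Delta \varrho_\varepsilon - \varepsilon(\varrho_\varepsilon - h)$ in the continuity equation and the weight $K(\varrho_\varepsilon)$ inside the flux. I fix a cutoff $M \in C^1$ as in Proposition \ref{odciecieapriori}, with transition interval $[k, k+d]$ chosen so that $[1/(k+d), 1/k] \subset [1/n_1, m_1]$; in the end $k$ will be taken slightly above $n_0$ with $d$ small, so that on the support of $\phi'$ (where $\phi(\varrho) := M^l(1/\varrho)$) we have $K \equiv 1$. Testing \eqref{aprmasa} with $\phi(\varrho_\varepsilon)$ and integrating by parts twice, using $\ve_\varepsilon \cdot \norm = 0$ on the convective term and $\partial \varrho_\varepsilon/\partial \norm = 0$ on the Laplacian, and using $\intO \dyw \ve_\varepsilon \dd x = 0$ to trade the primitive from the origin for $H(\varrho) := \int_\varrho^\infty t \phi'(t) \dd t$, I obtain
\[
\intO \dyw \ve_\varepsilon \, H(\varrho_\varepsilon) \dd x = \varepsilon \intO \phi'(\varrho_\varepsilon) |\nabla \varrho_\varepsilon|^2 \dd x + \varepsilon \intO (\varrho_\varepsilon - h) M^l(1/\varrho_\varepsilon) \dd x.
\]
The first right-hand term is nonnegative, the second is of order $\varepsilon$.

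Substituting $(2\mu + \nu) \dyw \ve_\varepsilon = P(\varrho_\varepsilon) - G_\varepsilon$, using that $P = \pi$ on $[1/n_1, m_1]$ (so $P(1/k) = \pi(1/k)$) and that $P$ is non-decreasing everywhere, then sandwiching $H$ between $\frac{1 - M^l(1/\varrho)}{k+d}$ and $\frac{1 - M^l(1/\varrho)}{k}$ as in the a priori proof, and finally passing $l \to \infty$ by dominated convergence, yields
\[
\Big(-\tfrac{k}{k+d} \pi(1/k) - \|G_\varepsilon\|_\infty\Big) |\{\varrho_\varepsilon < 1/k\}| \leq C k \varepsilon.
\]
To exploit this I need uniform control of $\|G_{\varepsilon_n}\|_\infty$: the estimates of Lemma \ref{ogr} apply verbatim to $G_\varepsilon$ in place of $\overline G$ (they only use $\varepsilon$-uniform bounds), giving a uniform $W^{1,q}$-bound for any $q > 2$; combining this with Lemma \ref{zbieznoscg} via Morrey's embedding and Arzelà–Ascoli, one extracts a subsequence along which $G_{\varepsilon_n} \to \overline G$ uniformly on $\overline{\Omega}$, and in particular $\|G_{\varepsilon_n}\|_\infty \to \|\overline G\|_\infty$.

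For each fixed $k > n_0$ and $d$ small, the bracket in the previous display is eventually bounded below by some $\eta(k) > 0$, so $|\{\varrho_{\varepsilon_n} < 1/k\}| \leq C k \varepsilon_n / \eta(k) \to 0$. A diagonal selection with $k_j \searrow n_0$, choosing $\varepsilon_{n_j}$ small enough to beat the degenerating margin $\eta(k_j) \to 0$, then gives $|\{\varrho_{\varepsilon_{n_j}} < 1/n_0\}| \to 0$ along the chosen subsequence. The symmetric test with $M^l(\varrho_\varepsilon)$ provides the analogous control $|\{\varrho_{\varepsilon_{n_j}} > m_0\}| \to 0$. It remains to verify the consistency condition $n_0 < n_1 = n_2 - h$ and $m_0 < m_1 = m_2 - h$: the definitions $\pi(1/n_0) = -\|\overline G\|_\infty$, $\pi(m_0) = \|\overline G\|_\infty$ together with \eqref{ginfty} make $\pi_\pm$ at $1/n_0, m_0$ grow like $(\pi_-(1/n_2) + \pi_+(m_2))^{2(1-2/q)} m_2$; since $1 - 2/q < 1$ and $\pi_\pm$ diverges at $0, \infty$, for $n_2, m_2$ sufficiently large the required strict inclusion $n_0 < n_1$, $m_0 < m_1$ holds. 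The central technical obstacle is precisely this diagonal balance: as $k \to n_0^+$ (resp.\ $k \to m_0^-$) the margin $\eta(k)$ vanishes, forcing a careful joint choice of $k_j$, $d_j$, and $\varepsilon_{n_j}$, and this in turn is what prevents a direct ``$\varepsilon \to 0$'' passage with fixed parameters.
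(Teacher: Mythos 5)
Your overall strategy matches the paper's: test the approximate continuity equation with $M^l(1/\varrho_\varepsilon)$, exploit the sign of the $\varepsilon\Delta\varrho_\varepsilon$ term, use that $K\equiv 1$ on the transition interval so $P=\pi$ there, substitute $G_\varepsilon$, sandwich, and pass $l\to\infty$. The endgame (power of $\pi_\pm$ at the cut-offs strictly less than one ensures $n_0<n_1$, $m_0<m_1$; diagonal selection as $k\downarrow n_0$, $d\to 0$) is also essentially what the paper does, though it compresses it into ``due to arbitrary choice of $d$, $\kappa$, $\kappa'$.''

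There is, however, a genuine gap at the central step. You write the inequality with $\|G_\varepsilon\|_\infty$ and justify a uniform bound on it by asserting that ``the estimates of Lemma \ref{ogr} apply verbatim to $G_\varepsilon$ in place of $\overline G$ (they only use $\varepsilon$-uniform bounds).'' That is not the case. Lemma \ref{ogr} gets $\overline{G}\in W^{1,q}$ from the \emph{limit} vorticity equation (\ref{gromega1}, \ref{gromega2}), which is clean precisely because the two convective contributions recombine in the limit (Remark \ref{uwaga}). At the approximate level the vorticity equation \eqref{omegaapr1} still carries the extra term $\tfrac12\rot\dyw(K(\varrho_\varepsilon)\varrho_\varepsilon\ve_\varepsilon\otimes\ve_\varepsilon)$; Proposition \ref{stwomega} handles it by splitting $\omega_\varepsilon=\omega_1+\omega_2$ and only yields $\|\omega_1\|_q\le C$, \emph{not} a $W^{1,q}$-bound. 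Equivalently, after expanding $\dyw(K\varrho_\varepsilon\ve_\varepsilon\otimes\ve_\varepsilon)$ and inserting \eqref{aprmasa}, one meets the term $\varepsilon\Delta\varrho_\varepsilon\,\ve_\varepsilon$, and the only uniform control available is $\sqrt{\varepsilon}\|\nabla\varrho_\varepsilon\|_2\le C$ (Proposition \ref{rhoepsilon}), which makes it small in $W^{-1,2}$ but gives no $L^q$-bound uniformly in $\varepsilon$. So $\|G_\varepsilon\|_{1,q}$ is not uniformly bounded, the Morrey/Arzel\`a--Ascoli step collapses, and the ``$\leq Ck\varepsilon$'' right-hand side is unjustified. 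The paper's fix is to avoid $\|G_\varepsilon\|_\infty$ altogether: estimate $\int_{\{\varrho_\varepsilon<1/k\}}|G_\varepsilon|(1-M^l)\,\mathrm dx \le \|\overline G\|_\infty\,|\{\varrho_\varepsilon<1/k\}| + C\|\overline G - G_\varepsilon\|_2$, where $\|\overline G\|_\infty$ \emph{is} controlled by Lemma \ref{ogr} and $\|\overline G - G_\varepsilon\|_2\to 0$ by Lemma \ref{zbieznoscg}. You should replace your use of $\|G_\varepsilon\|_\infty$ by this split; with that change the rest of your argument (including the diagonal selection) goes through.
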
 
\begin{proof}
Let $M \in C^1(]0, \infty [)$ satisfy conditions 
$$M(t) = \left\{ \begin{array}{rl} 
1 & \text{for }t \leq k,  \\
0 & \text{for }t \geq k + d. \\
\end{array} \right. $$
and $M'(t) < 0$ for $t \in ]k,k+d[$. Let $l \in \mathbb{N}$. 
We test approximate continuity equation \eqref{aprmasa} with $M^l({1 \over \varrho_\varepsilon})$. Since
\begin{equation} 
\intO \Delta \varrho_\varepsilon M^l\left(\tfrac{1}{\varrho_\varepsilon}\right) \dd x = \varepsilon l \intO M^{l - 1} \left(\tfrac{1}{\varrho_\varepsilon}\right) M'\left(\tfrac{1}{\varrho_\varepsilon}\right) \tfrac{1}{\varrho_\varepsilon^2} |\nabla \varrho_\varepsilon |^2 \dd x \leq 0,
\end{equation} 
we obtain
\begin{multline} 
\intO M^l\left(\tfrac{1}{\varrho_\varepsilon}\right) \dyw (K(\varrho_\varepsilon)\varrho_\varepsilon \ve_\varepsilon)\dd x \\ = - \varepsilon \intO \varrho_\varepsilon M^l\left(\tfrac{1}{\varrho_\varepsilon}\right) \dd x + \varepsilon \intO \Delta \varrho_\varepsilon M^l\left(\tfrac{1}{\varrho_\varepsilon}\right) \dd x  
+ \varepsilon h \intO K(\varrho_\varepsilon) M^l \left(\tfrac{1}{\varrho_\varepsilon}\right) \dd x \\ \leq - \varepsilon \intO \varrho_\varepsilon M^l\left(\tfrac{1}{\varrho_\varepsilon}\right) \dd x+ \varepsilon h \intO K(\varrho_\varepsilon) M^l \left(\tfrac{1}{\varrho_\varepsilon}\right) \dd x =: B_\varepsilon
\end{multline}
where $B_\varepsilon \to 0$ as $\varepsilon \to 0$. Therefore, integrating by parts on the l. h. s., 
\begin{multline} 
B_\varepsilon \geq l \intO M^{l - 1} \left(\tfrac{1}{\varrho_\varepsilon}\right) M'\left(\tfrac{1}{\varrho_\varepsilon}\right) {1 \over \varrho_\varepsilon} K(\varrho_\varepsilon) \ve_\varepsilon \cdot \nabla \varrho_\varepsilon \dd x = \\ - l \intO \ve_\varepsilon \cdot \int_{\varrho_\varepsilon}^\infty M^{l - 1} \left(\tfrac{1}{t}\right) M'\left(\tfrac{1}{t}\right) \tfrac{1}{t} K(t) \dd t \dd x .
\end{multline}
Since $ M'({1 \over \varrho_\varepsilon}) = 0 $ for $\varrho_\varepsilon \leq {1 \over k + d}$, this implies
\begin{multline} 
B_\varepsilon \geq  l \intO \dyw \ve_\varepsilon \int_{\varrho_\varepsilon}^\infty M^{l - 1} \left(\tfrac{1}{t}\right) M'\left(\tfrac{1}{t}\right) \tfrac{1}{t} \dd t \dd x  = - \intO \dyw \ve_\varepsilon \int_{\varrho_\varepsilon}^\infty t {\dd \over \dd t} M^l \left(\tfrac{1}{t}\right) \dd t \dd x.
\end{multline} 
By definition of $G_\varepsilon$ 
\begin{multline} 
- \intO P(\varrho_\varepsilon) \int_{\varrho_\varepsilon}^\infty t {\dd \over \dd t} M^l \left(\tfrac{1}{t}\right) \dd t \dd x  \leq - \intO G_\varepsilon \int_{\varrho_\varepsilon}^\infty t {\dd \over \dd t} M^l \left(\tfrac{1}{t}\right) \dd t \dd x + (2 \mu + \nu)  B_\varepsilon .
\end{multline} 
The inner integrals in above inequality might be nonzero only if ${1 \over k+d} < \varrho_\varepsilon < {1 \over k} $. The expression on the l.\,h.\,s.\;is then non-negative and 
\begin{multline} 
- \tfrac{1}{k+ d} \intO P(\varrho_\varepsilon) \int_{\varrho_\varepsilon}^\infty {\dd \over \dd t} M^l \left(\tfrac{1}{t}\right) \dd t \dd x \leq \tfrac{1}{k} \intO |G_\varepsilon| \int_{\varrho_\varepsilon}^\infty {\dd \over \dd t} M^l \left(\tfrac{1}{t}\right) \dd t \dd x + (2 \mu + \nu)  |B_\varepsilon| .
\end{multline} 
Evaluating inner integrals yields
\begin{multline} 
- \tfrac{k}{k+d} \int_{\{ \varrho_\varepsilon < \tfrac{1}{k}\}} P(\varrho_\varepsilon) (1 - M^l\left(\tfrac{1}{\varrho_\varepsilon}\right)) \dd x \leq \int_{\{ \varrho_\varepsilon < \tfrac{1}{k}\}} |G_\varepsilon| (1 - M^l\left(\tfrac{1}{\varrho_\varepsilon}\right)) \dd x + (2 \mu + \nu)  k |B_\varepsilon| .
\end{multline} 
Monotonicity of $P(\cdot)$ implies 
\begin{multline} 
 - \tfrac{k}{k+d} P(\tfrac{1}{k}) |\{ \varrho_\varepsilon < \tfrac{1}{k}\}| \leq \tfrac{k}{k+d} \| P(\varrho_\varepsilon)\|_{L^2(\{ \varrho_\varepsilon < \tfrac{1}{k}\} )} \| M^l (\varrho_\varepsilon)\|_{L^2(\{ \varrho_\varepsilon < \tfrac{1}{k}\} )} \\ 
+ \| \overline G\|_\infty |\{ \varrho_\varepsilon < \tfrac{1}{k}\}| + C \|\overline G - G_\varepsilon \|_2 + (2 \mu + \nu)  k |B_\varepsilon| .
\end{multline} 
Since $M^l (\tfrac{1}{t}) \to 0$ as $l \to \infty$ for $t < \tfrac{1}{k}$, dominated convergence implies that for any $\delta > 0$ there is some $l = l(\delta, \varepsilon)$ such that 
\begin{equation}    
\| M^l (\varrho_\varepsilon)\|_{L^2(\{ \varrho_\varepsilon < \tfrac{1}{k}\} )} \leq \delta,  
\end{equation} 
and therefore 
\begin{equation} 
\left(- \tfrac{k}{k+d} P(\tfrac{1}{k})  - \|\overline G\|_\infty\right) |\{ \varrho_\varepsilon < \tfrac{1}{k}\}| \leq C \delta + C \|\overline G - G_\varepsilon \|_2 + (2 \mu + \nu)  k |B_\varepsilon| .
\end{equation}  
By similar reasoning (see \cite{mp}), testing the approximate continuity equation with~$M^l(\varrho_\varepsilon)$ yields
\begin{equation} 
\left( \tfrac{k}{k+d} P(k)  - \|\overline G\|_\infty\right) |\{ \varrho_\varepsilon > k\}| \leq C \delta + C \|\overline G - G_\varepsilon \|_2 + (2 \mu + \nu)  \tfrac{1}{k+d} |B_\varepsilon'|
\end{equation} 
where $B_\varepsilon' \to 0$ as $\varepsilon \to 0^+$. By virtue of Lemma \ref{ogr}, 
\begin{equation}
\| \overline G \|_\infty \leq C(q)\left(1 + \pi_+(m_2)^{1-{2 \over q}} + \pi_-(\tfrac{1}{n_2})^{1 - {2 \over q}}\right)^2m_2 
\end{equation}
for any $q >2$. Restricting ourselves to $m_2, n_2$ s. t. $\pi_+(m_2) = \pi_-(\tfrac{1}{n_2})$ yields 
\begin{equation} 
\| \overline G \|_\infty \leq C(q)\left(1 + \pi_+(m_2)^{2(1-{2 \over q}) + {1 \over \gamma}}\right) = C(q) \left(1 + \pi_-(m_2)^{2(1-{2 \over q}) + {1 \over \gamma}}\right)
\end{equation} 
Fixing $q$ sufficiently close to $2$ we may have $2(1-{2 \over q}) + {1 \over \gamma} < 1$. Therefore we may choose $n_2$, $m_2$, $n$ and $m$ so that  
\begin{equation} 
- \tfrac{n}{n+d} P(\tfrac{1}{n})  - \|\overline G\|_\infty =: \kappa > 0, \quad \tfrac{m}{m+d} P(m)  - \|\overline G\|_\infty =: \kappa' > 0 
\label{odciecieosz1}
\end{equation} 
and $n < n_1$, $m < m_1$. Consequently
\begin{equation} 
 |\{ \varrho_\varepsilon < \tfrac{1}{n}\}| \leq C(\kappa) (\delta + \|\overline G - G_\varepsilon \|_2 + |B_\varepsilon|), \quad  |\{ \varrho_\varepsilon > m\}| \leq C(\kappa') (\delta + \|\overline G - G_\varepsilon \|_2 + |B_\varepsilon'|) .
\end{equation} 
Due to Lemma \ref{zbieznoscg} and arbitrary choice of $\delta$ 
\begin{equation} 
\lim_{\varepsilon_n \to 0} |\{ x \in \Omega \colon \varrho_{\varepsilon_n}(x) \notin [\tfrac{1}{n}, m]\}| = 0 .
\end{equation} 
Due to arbitrary choice of $d$, $\kappa$, $\kappa'$ in \eqref{odciecieosz1}, we have \eqref{odciecieosz}
\end{proof} 

\begin{wniosek} 
${1 \over n_0} \leq \varrho \leq m_0$ almost everywhere in $\Omega$. 
\label{odcieciegr} 
\end{wniosek}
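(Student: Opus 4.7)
The strategy is to combine Proposition \ref{odciecie}, which controls the measure of the bad set where $\varrho_{\varepsilon_n}$ escapes $[\tfrac{1}{n_0},m_0]$, with the weak-$*$ convergence $\varrho_{\varepsilon_n}\overset{*}\rightharpoonup\varrho$ in $L^\infty(\Omega)$ recorded in \eqref{przejscierhove}. The key point is that weak-$*$ limits preserve pointwise two-sided bounds, so it suffices to replace $\varrho_{\varepsilon_n}$ by a truncation that lies in $[\tfrac{1}{n_0},m_0]$ and that shares the same weak-$*$ limit.

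Concretely, I would define
\begin{equation*}
\widetilde\varrho_n := \max\bigl(\tfrac{1}{n_0},\min(m_0,\varrho_{\varepsilon_n})\bigr),
\end{equation*}
so that $\tfrac{1}{n_0}\leq\widetilde\varrho_n\leq m_0$ pointwise in $\Omega$. The difference $\widetilde\varrho_n-\varrho_{\varepsilon_n}$ is supported in the set $A_n:=\{x\in\Omega\colon\varrho_{\varepsilon_n}(x)\notin[\tfrac{1}{n_0},m_0]\}$ and, by the a priori bound $\tfrac{1}{n_2}\leq\varrho_{\varepsilon_n}\leq m_2$ from Theorem \ref{aprtw}, is bounded in absolute value by $m_2$. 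Proposition \ref{odciecie} gives $|A_n|\to 0$, hence
\begin{equation*}
\|\widetilde\varrho_n-\varrho_{\varepsilon_n}\|_{L^1(\Omega)}\leq m_2\,|A_n|\longrightarrow 0.
\end{equation*}

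From this strong $L^1$-convergence, together with the $L^\infty$ bound $\|\widetilde\varrho_n\|_\infty\leq m_0$, it follows by a standard density argument that $\widetilde\varrho_n\overset{*}\rightharpoonup\varrho$ in $L^\infty(\Omega)$ as well. To conclude, I would test this weak-$*$ convergence against $\mathbf 1_E$ for an arbitrary measurable $E\subset\Omega$: the pointwise bound $\tfrac{1}{n_0}|E|\leq\int_E\widetilde\varrho_n\dd x\leq m_0|E|$ passes to the limit to yield $\tfrac{1}{n_0}|E|\leq\int_E\varrho\dd x\leq m_0|E|$ for every measurable $E$, which is equivalent to $\tfrac{1}{n_0}\leq\varrho\leq m_0$ almost everywhere in $\Omega$.

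No genuine obstacle is expected here; the work has all been done in Proposition \ref{odciecie}, and the remaining step is the routine fact that weak-$*$ limits inherit pointwise two-sided bounds, together with bookkeeping to replace $\varrho_{\varepsilon_n}$ by its truncation without altering the limit. The only mild technicality is verifying that $\widetilde\varrho_n$ and $\varrho_{\varepsilon_n}$ have the same weak-$*$ limit, which is settled by the $L^1$-estimate above.
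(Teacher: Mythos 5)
Your argument is correct and rests on the same two ingredients as the paper's proof: the vanishing measure of the bad sets from Proposition \ref{odciecie} and the weak-$*$ convergence \eqref{przejscierhove}. The paper proceeds slightly more directly by testing $\varrho_{\varepsilon_n}$ against $\mathbf 1_A$ for $A=\{\varrho>m_0\}$ (resp.\ $\{\varrho<\tfrac{1}{n_0}\}$) and passing to the limit, whereas you factor through the truncation $\widetilde\varrho_n$; this is a minor repackaging of the same idea, and your proof is valid.
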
 
\begin{proof} 
Let $A = \{x \in \Omega \colon  \varrho(x) > m_0\}$. Then
\begin{equation} 
\intO \varrho_{\varepsilon_n} \mathbf 1 _{A} \dd x = \int_{\{ \varrho_{\varepsilon_n} \leq  m_0\}} \varrho_{\varepsilon_n} \mathbf 1 _ { A } \dd x +  \int_{\{ \varrho_{\varepsilon_n} >  m_0\}} \varrho_{\varepsilon_n} \mathbf 1 _ { A } \dd x \leq m_0 |A| + m_2 | \{ \varrho_{\varepsilon_n} >  m_0\} |. 
\end{equation} 
Passing to the limit $\varepsilon_n \to 0^+$, 
\begin{equation} 
m_0 |A | \geq \int \varrho \mathbf 1 _ { A } \dd x, 
\end{equation} 
which is impossible unless $|A| = 0$. The proof of the first inequality of the assertion is analogous. 
\end{proof} 
\begin{wniosek} 
$$\overline{K(\varrho)\varrho} = \varrho.$$ 
\end{wniosek}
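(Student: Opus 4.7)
The plan is to combine Corollary \ref{odcieciegr} (which says $\tfrac{1}{n_0} \leq \varrho \leq m_0$ a.e.) with the measure-vanishing property from Proposition \ref{odciecie} and the cutoff structure of $K$. Since $n_0 < n_1$ and $m_0 < m_1$, the a.e.\;bound on $\varrho$ places it in the interval $[\tfrac{1}{n_0}, m_0] \subset (\tfrac{1}{n_1}, m_1)$, where by definition $K \equiv 1$. In particular, pointwise a.e.\;we already have $K(\varrho)\varrho = \varrho$, so the content of the claim lies entirely in identifying the weak-star limit with this pointwise value.

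To that end, I would write the trivial decomposition
\begin{equation*}
K(\varrho_{\varepsilon_n})\varrho_{\varepsilon_n} = \varrho_{\varepsilon_n} - \bigl(1 - K(\varrho_{\varepsilon_n})\bigr)\varrho_{\varepsilon_n}
\end{equation*}
and observe that the remainder term is supported in $\{\varrho_{\varepsilon_n} \leq \tfrac{1}{n_1}\} \cup \{\varrho_{\varepsilon_n} \geq m_1\}$, which, because $\tfrac{1}{n_1} < \tfrac{1}{n_0}$ and $m_1 > m_0$, is a subset of $\{\varrho_{\varepsilon_n} \notin [\tfrac{1}{n_0}, m_0]\}$. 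By Proposition \ref{odciecie}, the measure of this set tends to $0$. Combined with the uniform $L^\infty$ bound $\varrho_{\varepsilon_n} \leq m_2$ and $0 \leq 1 - K \leq 1$, the remainder converges to $0$ strongly in every $L^p(\Omega)$ with $p < \infty$.

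Consequently $K(\varrho_{\varepsilon_n})\varrho_{\varepsilon_n}$ has the same weak-star $L^\infty$ (and weak $L^p$) limit as $\varrho_{\varepsilon_n}$, namely $\varrho$, which by definition of the bar notation gives $\overline{K(\varrho)\varrho} = \varrho$. There is no real obstacle here beyond verifying the inclusion of supports and quoting the appropriate convergence; the essential work was already done in Proposition \ref{odciecie} and Corollary \ref{odcieciegr}, which together guarantee that in the limit $\varrho$ takes values only in the range where the cutoff $K$ is inactive.
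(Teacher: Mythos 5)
Your proof is correct and takes essentially the same route as the paper's. Both arguments decompose $K(\varrho_{\varepsilon_n})\varrho_{\varepsilon_n} = \varrho_{\varepsilon_n} + \bigl(K(\varrho_{\varepsilon_n}) - 1\bigr)\varrho_{\varepsilon_n}$ and kill the correction term by noting that it is uniformly bounded by $m_2$ and supported on the set $\{\varrho_{\varepsilon_n} \notin [\tfrac{1}{n_1}, m_1]\} \subset \{\varrho_{\varepsilon_n} \notin [\tfrac{1}{n_0}, m_0]\}$, whose measure vanishes by Proposition \ref{odciecie}. The paper phrases the conclusion by pairing against an arbitrary $\phi \in L^1(\Omega)$ and invoking absolute continuity of the integral, whereas you state strong $L^p$ convergence ($p<\infty$) of the remainder and then pass to the weak-star statement; these are the same fact dressed slightly differently, and your version is, if anything, a touch more explicit. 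Two small remarks: the citation of Corollary \ref{odcieciegr} in your opening paragraph is a helpful sanity check but not actually needed — Proposition \ref{odciecie} alone suffices; and the paper's displayed identity writes the error integral only over $\{\varrho_{\varepsilon_n} > m_1\}$, omitting $\{\varrho_{\varepsilon_n} < \tfrac{1}{n_1}\}$, which you correctly include.
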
 
\begin{proof} 
For any $\phi \in L^1(\Omega)$,  
\begin{equation}
\intO \varrho_{\varepsilon_n} K(\varrho_{\varepsilon_n}) \phi \dd x = \intO \varrho_{\varepsilon_n} \phi \dd x + \int_{ \{ \varrho_{\varepsilon_n} > m_1 \} } \varrho_{\varepsilon_n} (K(\varrho_{\varepsilon_n}) - 1) \phi \dd x \to \intO \varrho \phi \dd x . 
\end{equation} 
\end{proof} 

It remains to prove that $\overline{P(\varrho)} = \pi(\varrho)$. 
\begin{lemat} 
$$\intO \overline{P(\varrho) \varrho} \dd x \leq \intO \overline G \varrho \dd x .$$
\label{nierG} 
\end{lemat}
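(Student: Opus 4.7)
The plan is to reduce the claim to a statement about $\lim_n \intO \dyw \ve_{\varepsilon_n} \varrho_{\varepsilon_n} \dd x$ via the definition \eqref{defGbar} of $\overline G$, and then to extract an estimate on this limit by renormalizing the approximate continuity equation against $\ln \varrho_\varepsilon$. The key observation permitting this renormalization is that $\varrho_\varepsilon \in [\tfrac{1}{n_2}, m_2]$, so $\ln \varrho_\varepsilon \in L^\infty(\Omega)$ with gradient in every $L^p(\Omega)$; this makes it a legitimate test function in \eqref{aprmasa}.

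The first step is algebraic. From $(2\mu + \nu) \dyw \ve_\varepsilon = P(\varrho_\varepsilon) - G_\varepsilon$, multiplication by $\varrho_\varepsilon$ and integration yields
$$(2\mu + \nu) \intO \dyw \ve_\varepsilon \, \varrho_\varepsilon \dd x = \intO P(\varrho_\varepsilon) \varrho_\varepsilon \dd x - \intO G_\varepsilon \varrho_\varepsilon \dd x.$$
Along the subsequence from Lemma \ref{zbieznoscg} (further extracted so that $P(\varrho_{\varepsilon_n}) \varrho_{\varepsilon_n} \slabo \overline{P(\varrho) \varrho}$ in $L^2$), the first integral on the right tends to $\intO \overline{P(\varrho) \varrho} \dd x$ by the very definition of the bar notation, while the second tends to $\intO \overline G \varrho \dd x$ since $G_{\varepsilon_n} \to \overline G$ strongly in $L^2$ and $\varrho_{\varepsilon_n} \slabo \varrho$ weakly in $L^2$. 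As $2\mu + \nu > 0$, it suffices to prove $\limsup_{n \to \infty} \intO \dyw \ve_{\varepsilon_n} \varrho_{\varepsilon_n} \dd x \leq 0$.

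To get this, I would test \eqref{aprmasa} by $\ln \varrho_\varepsilon$. Two integrations by parts, using $\ve_\varepsilon \cdot \norm = 0$ and $\partial_\norm \varrho_\varepsilon = 0$, produce, with $F(\varrho) := \int_0^\varrho K(s) \dd s$,
$$\intO F(\varrho_\varepsilon) \dyw \ve_\varepsilon \dd x = -\varepsilon \intO \tfrac{|\nabla \varrho_\varepsilon|^2}{\varrho_\varepsilon} \dd x - \varepsilon \intO (\varrho_\varepsilon - h)(\ln \varrho_\varepsilon - \ln h) \dd x \leq 0,$$
where the last integrand is rewritten using the mass constraint \eqref{aprrhosrednia} and is pointwise non-negative by monotonicity of $\ln$. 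The final step is to compare $\intO F(\varrho_\varepsilon) \dyw \ve_\varepsilon$ with $\intO \varrho_\varepsilon \dyw \ve_\varepsilon$: on $\{\varrho_\varepsilon \in [\tfrac{1}{n_1}, m_1]\}$ one has $F(\varrho_\varepsilon) = \varrho_\varepsilon + c$ for the constant $c := F(\tfrac{1}{n_1}) - \tfrac{1}{n_1}$, while Proposition \ref{odciecie} (together with $[\tfrac{1}{n_0}, m_0] \subset [\tfrac{1}{n_1}, m_1]$) forces the complementary set to have measure $o(1)$ along the chosen sequence. Combined with $\|\dyw \ve_\varepsilon\|_2 \leq C$ and $\intO \dyw \ve_\varepsilon \dd x = 0$, Cauchy--Schwarz on the vanishing bad set gives
$$\intO \varrho_{\varepsilon_n} \dyw \ve_{\varepsilon_n} \dd x = \intO F(\varrho_{\varepsilon_n}) \dyw \ve_{\varepsilon_n} \dd x + c \intO \dyw \ve_{\varepsilon_n} \dd x + o(1) \leq o(1),$$
closing the argument.

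The main obstacle is precisely this last comparison: the natural renormalization controls $F(\varrho_\varepsilon)$, not $\varrho_\varepsilon$ itself, and outside the flat region $\{K \equiv 1\}$ these differ. Its resolution relies crucially on Proposition \ref{odciecie}, which is why that lower/upper bound on the approximate density had to be obtained before tackling the defect $\overline{P(\varrho)} - \pi(\varrho)$.
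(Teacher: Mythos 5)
Your argument is correct and follows essentially the same route as the paper: test the approximate continuity equation \eqref{aprmasa} with (a version of) $\log \varrho_\varepsilon$ — the paper uses $\log m_2 - \log \varrho_\varepsilon$, which differs only by a constant that integrates to zero — use the signs of the $\varepsilon$-terms and the primitive $F(\varrho)=\int_0^\varrho K$ of $K$, and control the resulting defect between $F(\varrho_\varepsilon)$ and $\varrho_\varepsilon$ via Proposition \ref{odciecie} and the uniform $L^2$ bound on $\dyw\ve_\varepsilon$. (There is a harmless sign slip: on $\{\varrho_\varepsilon\in[\tfrac1{n_1},m_1]\}$ one has $\varrho_\varepsilon=F(\varrho_\varepsilon)-c$, so the constant enters as $-c\intO\dyw\ve_\varepsilon\dd x$, but this vanishes anyway since $\ve_\varepsilon\cdot\norm=0$.)
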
 

\begin{proof} 
We test the approximate continuity equation \eqref{aprmasa} with $\log m_2 - \log \varrho_{\varepsilon_n}$. Since 
\begin{equation} 
\intO {\varepsilon_n} \Delta \varrho_{\varepsilon_n} (\log m_2 - \log \varrho_{\varepsilon_n}) \dd x = {\varepsilon_n} \intO |\nabla \varrho_{\varepsilon_n} |^2 {1 \over \varrho_{\varepsilon_n}} \dd x \geq 0, 
\end{equation} 
we obtain
\begin{equation} 
\intO (\dyw(K(\varrho_{\varepsilon_n}) \varrho_{\varepsilon_n} \ve_{\varepsilon_n}) + {\varepsilon_n} (\varrho_{\varepsilon_n} - h) ) (\log m_2 - \log \varrho_{\varepsilon_n}) \dd x \geq 0 ,
\end{equation} 
and therefore
\begin{equation} 
\intO K(\varrho_{\varepsilon_n}) \ve_{\varepsilon_n} \cdot \nabla \varrho_{\varepsilon_n} \dd x \geq {\varepsilon_n}\intO  (h  - \varrho_{\varepsilon_n})  (\log m_2 - \log \varrho_{\varepsilon_n}) \dd x .
\end{equation} 
This implies that
\begin{multline} 
- \intO \varrho_{\varepsilon_n} \dyw \ve_{\varepsilon_n} \dd x  \geq \intO (1 - K(\varrho_{\varepsilon_n})) \ve_{\varepsilon_n} \varrho_{\varepsilon_n} \cdot \nabla \varrho_{\varepsilon_n} \dd x + {\varepsilon_n} \intO (h- \varrho_{\varepsilon_n}) \log {m_2 \over \varrho_{\varepsilon_n}}  \dd x \\ \geq  - \intO \left(\int_1^{\varrho_{\varepsilon_n}} t(1 - K(t) ) \dd t \right) \dyw \ve_{\varepsilon_n} \dd x + {\varepsilon_n} \intO (h-\varrho_{\varepsilon_n}) \log {m_2 \over \varrho_{\varepsilon_n}} \dd x =: B_{\varepsilon_n}.   
\end{multline} 
The function $t(1 - K(t))$ is bounded and nonzero only on set $\{\varrho_{\varepsilon_n} < \tfrac{1}{n_1}\} \cup \{ \varrho_{\varepsilon_n} > m_1 \}$, whose measure converges to $0$ as ${\varepsilon_n} \to 0$, therefore $B_{\varepsilon_n} \to 0$ as ${\varepsilon_n} \to 0$. 

By the definition of $G_{\varepsilon_n}$, 
\begin{equation} 
\intO P(\varrho_{\varepsilon_n}) \varrho_{\varepsilon_n} \dd x + (2 \mu + \nu) B_{\varepsilon_n} \leq \intO G_{\varepsilon_n} \varrho_{\varepsilon_n} \dd x.
\end{equation} 
Passing to the limit, 
\begin{equation} 
\intO \overline{P(\varrho) \varrho} \dd x \leq \intO \overline G \varrho \dd x 
\end{equation} 
due to strong convergence of $G_{\varepsilon_n}$ (Lemma \ref{zbieznoscg}). 
\end{proof} 
If $\varrho$ were smooth, testing the continuity equation with $\log \varrho$ would lead to 
\begin{lemat} 
$$\intO \varrho \dyw \ve \dd x = 0. $$
\label{stwnier} 
\end{lemat}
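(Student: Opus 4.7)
Proof plan:

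Mimicking the hint, I would test the approximate continuity equation~\eqref{aprmasa} with $\log \varrho_\varepsilon$ (a legitimate test function since $\varrho_\varepsilon \in W^{2,p}(\Omega)$ and $\varrho_\varepsilon \geq \tfrac{1}{n_2} > 0$), integrating by parts twice and using the boundary conditions~\eqref{aprvnorm} and~\eqref{aprpnorm}, to derive the identity
\begin{equation*}
\intO Q(\varrho_\varepsilon) \dyw \ve_\varepsilon \dd x + \varepsilon \intO \frac{|\nabla \varrho_\varepsilon|^2}{\varrho_\varepsilon} \dd x = -\varepsilon \intO (\varrho_\varepsilon - h) \log \varrho_\varepsilon \dd x,
\end{equation*}
with $Q(t) := \int_h^t K(s) \dd s$. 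The right-hand side is $o(1)$ as $\varepsilon \to 0^+$ by the uniform $L^\infty$-bounds on $\varrho_\varepsilon$ and $\log \varrho_\varepsilon$.

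To pass to the limit in the first term on the left, I would exploit the following observation. Since $K \equiv 1$ on $[\tfrac{1}{n_1}, m_1]$, we have $Q(t) = t - h$ there. By Proposition~\ref{odciecie} together with the inclusion $[\tfrac{1}{n_0}, m_0] \subset [\tfrac{1}{n_1}, m_1]$, the function $Q(\varrho_{\varepsilon_n}) - (\varrho_{\varepsilon_n} - h)$ is bounded in $L^\infty$ and supported on a set of vanishing measure, hence tends to $0$ in $L^p(\Omega)$ for every $p < \infty$. Combined with $\intO \dyw \ve_{\varepsilon_n} \dd x = 0$ (from $\ve_{\varepsilon_n} \cdot \norm = 0$) and the weak $L^p$-convergence of $\dyw \ve_{\varepsilon_n}$, this yields $\intO Q(\varrho_{\varepsilon_n}) \dyw \ve_{\varepsilon_n} \dd x = \intO \varrho_{\varepsilon_n} \dyw \ve_{\varepsilon_n} \dd x + o(1)$. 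Non-negativity of $\varepsilon_n \intO |\nabla \varrho_{\varepsilon_n}|^2/\varrho_{\varepsilon_n} \dd x$ then forces $\limsup_n \intO \varrho_{\varepsilon_n} \dyw \ve_{\varepsilon_n} \dd x \leq 0$. A companion bound at the limit level, $\intO \varrho \dyw \ve \dd x \leq 0$, follows from Lemma~\ref{nierG} and the monotonicity of $P$ via Minty's trick: monotonicity gives $\intO [\overline{P(\varrho)\varrho} - \overline{P(\varrho)}\varrho] \dd x \geq 0$, while Lemma~\ref{nierG} combined with $\overline{G} = -(2\mu+\nu)\dyw\ve + \overline{P(\varrho)}$ bounds the same quantity from above by $-(2\mu+\nu) \intO \varrho \dyw \ve \dd x$.

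The \emph{main obstacle} is upgrading these two $\leq$-bounds to the desired equality. I expect this to require showing the vanishing $\varepsilon_n \intO |\nabla \varrho_{\varepsilon_n}|^2/\varrho_{\varepsilon_n} \dd x \to 0$ together with the strong convergence $\intO \varrho_{\varepsilon_n} \dyw \ve_{\varepsilon_n} \dd x \to \intO \varrho \dyw \ve \dd x$, both of which are standard outputs of the effective viscous flux / Lions-Feireisl mechanism once the strict monotonicity of $P$ on $[\tfrac{1}{n_0}, m_0]$ is used. Equivalently, one may invoke a DiPerna-Lions renormalization of the limit equation $\dyw(\varrho \ve) = 0$: the choice $\beta(t) = t \log t$ yields $\dyw(\varrho\log\varrho\, \ve) + \varrho \dyw \ve = 0$ in $\mathcal{D}'(\Omega)$, after which integration together with $\ve \cdot \norm = 0$ on $\partial\Omega$ closes the argument.
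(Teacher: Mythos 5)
Your proposal presents two routes. The one you state last---renormalizing the limit equation $\dyw(\varrho\ve)=0$ in the spirit of DiPerna--Lions---is precisely what the paper does: it extends $\varrho$ by zero and $\ve$ arbitrarily to $\mathbb R^2$, mollifies, invokes the Friedrichs commutator lemma (Theorem \ref{friedrichs}), and tests the continuity equation with $\log S_\delta(\sigma)$. The essential ingredients are exactly those you identify: $\ve\in W^{1,q}(\Omega)$ for large $q$, and the two-sided pointwise bound $\tfrac{1}{n_0}\leq\varrho\leq m_0$ from Corollary \ref{odcieciegr}, which make the renormalizing function (effectively $\log$) uniformly Lipschitz on the range of $\varrho$. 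So the closing sentence of your proposal is the correct and complete argument.

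Your primary attempt---testing the approximate continuity equation \eqref{aprmasa} with $\log\varrho_\varepsilon$---is algebraically sound and the identity you derive is correct, but as you yourself recognize, it stalls. It yields only $\limsup_n\intO\varrho_{\varepsilon_n}\dyw\ve_{\varepsilon_n}\dd x\leq 0$ because the nonnegative dissipation term $\varepsilon_n\intO|\nabla\varrho_{\varepsilon_n}|^2/\varrho_{\varepsilon_n}\dd x$ cannot be separated from the quantity of interest. The upgrade you propose---invoking $\intO\varrho_{\varepsilon_n}\dyw\ve_{\varepsilon_n}\dd x\to\intO\varrho\dyw\ve\dd x$ as a ``standard output of the effective viscous flux mechanism''---would be circular here: that convergence (equivalently, identifying $\overline{P(\varrho)\varrho}$ with $\overline{P(\varrho)}\,\varrho$) is established only in the paper's final proposition of Section~3, and its proof relies on both Lemma \ref{nierG} \emph{and} the present Lemma \ref{stwnier}. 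Writing $\dyw\ve_{\varepsilon_n}=(2\mu+\nu)^{-1}(P(\varrho_{\varepsilon_n})-G_{\varepsilon_n})$ and using the strong $L^2$ convergence of $G_{\varepsilon_n}$ reduces the desired convergence to exactly the question $\intO\overline{P(\varrho)\varrho}\dd x=\intO\overline{P(\varrho)}\varrho\dd x$, which is what the Minty trick is about to settle using this lemma. The companion inequality $\intO\varrho\dyw\ve\dd x\leq 0$ you extract from Lemma \ref{nierG} and monotonicity is correct but does not supply the reverse bound. The renormalization route is therefore not merely an equivalent alternative; it is the approach that closes the argument, and it does so entirely at the limit level without returning to the $\varepsilon$-approximation.
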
 
As we only have $\varrho \in L^\infty(\Omega)$, we need to invoke Friedrichs lemma about commutators, which we state here in a version taken from \cite{ns} (Lemma 3.1). 
\begin{tw}[Friedrichs lemma] 
Let $\sigma \in L^\infty_\text{loc}(\mathbb R ^n)$, $\mathbf{u} \in W^{1,q}_{\text{loc}}(\mathbb R ^n)$ for some $1 < q < \infty$, $n \geq 2$. Let $S_{\delta}\colon \mathcal{D}'(\mathbb R^n) \to C^\infty(\mathbb{R}^n)$ be the operator of convolution with approximate identity. Then  
$$S_\delta (\mathbf u \cdot \nabla \sigma) - \mathbf u \cdot \nabla S_\delta (\sigma) \to 0 \quad \text{in } L^r_\text{loc}(\mathbb R ^n) \quad \text{ for all } 1 \leq r < q,$$
where $\mathbf u \cdot \nabla \sigma := \dyw(\sigma \mathbf u ) - \sigma \dyw \mathbf u$ in $\mathcal D ' (\mathbb R ^n)$. 
\label{friedrichs}
\end{tw}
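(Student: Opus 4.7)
The strategy is to justify rigorously the formal computation that testing the continuity equation $\dyw(\varrho\ve)=0$ (which holds after combining \eqref{gr2} with the Corollary identifying $\overline{K(\varrho)\varrho} = \varrho$) with $\log\varrho$ and integrating by parts yields $\intO \varrho \dyw \ve \dd x = 0$. Since $\log\varrho$ is not smooth, I would instead derive the renormalized continuity equation
$$\dyw(b(\varrho)\ve) + (b'(\varrho)\varrho - b(\varrho))\dyw\ve = 0 \qquad \text{in } \mathcal D'(\Omega)$$
in the DiPerna-Lions sense for arbitrary $b\in C^1(\mathbb R_+)$, then specialize to $b(s)=s\log s$, for which $b'(s)s - b(s) = s$. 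This reduces the claim to integrating $\dyw(\varrho\log\varrho\,\ve) + \varrho\dyw\ve = 0$ over $\Omega$ and discarding the divergence term using $\ve\cdot\norm = 0$ on $\brzeg$. Boundedness $\tfrac{1}{n_0}\leq\varrho\leq m_0$ from Corollary \ref{odcieciegr} ensures $\varrho\log\varrho$ and $b'(\varrho)$ stay in fixed ranges throughout.

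The renormalization itself invokes Friedrichs' Lemma \ref{friedrichs}. I would extend $\varrho$ and $\ve$ to $\tilde\varrho\in L^\infty(\mathbb R^2)$ and $\tilde\ve\in W^{1,q}_{\mathrm{loc}}(\mathbb R^2)$ via a Sobolev extension, convolve the identity $\tilde\ve\cdot\nabla\tilde\varrho + \tilde\varrho\dyw\tilde\ve = 0$ (which holds in $\Omega$) with the mollifier $S_\delta$, and obtain, on any relatively compact subset of $\Omega$,
$$\tilde\ve\cdot\nabla S_\delta\tilde\varrho + S_\delta\tilde\varrho\,\dyw\tilde\ve = \bigl(\tilde\ve\cdot\nabla S_\delta\tilde\varrho - S_\delta(\tilde\ve\cdot\nabla\tilde\varrho)\bigr) + \bigl(S_\delta\tilde\varrho\,\dyw\tilde\ve - S_\delta(\tilde\varrho\,\dyw\tilde\ve)\bigr).$$
The first bracket vanishes in $L^r_{\mathrm{loc}}$ as $\delta\to 0^+$ by Theorem \ref{friedrichs}; the second does so by strong $L^p_{\mathrm{loc}}$ convergence $S_\delta\tilde\varrho\to\tilde\varrho$ paired with $\dyw\tilde\ve\in L^q_{\mathrm{loc}}$. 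Multiplying by $b'(S_\delta\tilde\varrho)$ and applying the classical chain rule to the smooth factor $S_\delta\tilde\varrho$ yields the renormalized identity for the mollified density; passing $\delta\to 0^+$ gives the equation above in $\mathcal D'(\Omega)$.

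The last step is integrating the renormalized identity with $b(s)=s\log s$ up to the boundary. I would test against cutoffs $\phi_k(x) = \psi_k(\dist(x,\brzeg))\in C_c^\infty(\Omega)$, where $\psi_k\colon[0,\infty[\to[0,1]$ increases from $0$ to $1$ on $[0,1/k]$ with $|\psi_k'|\leq Ck$. Dominated convergence makes $\intO\varrho\dyw\ve\,\phi_k\dd x$ converge to $\intO\varrho\dyw\ve\dd x$, so the claim reduces to vanishing of the boundary-layer term $\intO\varrho\log\varrho\,(\ve\cdot\nabla\phi_k)\dd x$. This is the delicate point: $\nabla\phi_k$ is parallel to $\nabla \dist(\cdot,\brzeg)$, hence to the normal in the tubular neighbourhood of $\brzeg$, so combined with $\ve\in W^{1,q}(\Omega)\hookrightarrow C^{0,1-2/q}(\overline\Omega)$ for $q>2$ and the trace condition $\ve\cdot\norm=0$ on $\brzeg$, one obtains the pointwise bound $|\ve\cdot\nabla\phi_k(x)|\leq Ck\cdot \dist(x,\brzeg)^{1-2/q}$; integrating over the strip $\{\dist(\cdot,\brzeg)<1/k\}$ of measure $O(1/k)$ gives $O(k^{-(1-2/q)})\to 0$. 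The main obstacle is exactly this boundary-layer analysis — the DiPerna-Lions machinery producing the interior renormalized equation is, at this point in the paper, essentially standard.
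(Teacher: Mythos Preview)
Your proposal is not a proof of the displayed statement (Friedrichs' lemma, Theorem \ref{friedrichs}); the paper does not prove this either but cites it from the literature. What you have written is a proof of Lemma \ref{stwnier}, which \emph{uses} Friedrichs' lemma. Assuming that is what was intended, your argument is correct but takes a different route from the paper's.

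Both arguments invoke Theorem \ref{friedrichs} for the commutator $S_\delta(\mathbf u\cdot\nabla\sigma)-\mathbf u\cdot\nabla S_\delta\sigma$, but they diverge in how the boundary is treated. You renormalize in $\mathcal D'(\Omega)$ to obtain $\dyw(b(\varrho)\ve)+(b'(\varrho)\varrho-b(\varrho))\dyw\ve=0$ in the interior, then push the identity up to $\brzeg$ via cut-offs $\phi_k$ and a H\"older-continuity/boundary-layer estimate exploiting $\ve\in C^{0,1-2/q}(\overline\Omega)$ and $\ve\cdot\norm=0$. This works, but the step you flag as ``the main obstacle'' is entirely bypassed in the paper: there one extends $\varrho$ by $0$ to $\sigma\in L^\infty(\mathbb R^2)$ and $\ve$ to $\mathbf u\in W^{1,q}(\mathbb R^2)$, so that the weak continuity equation $\intO\varrho\ve\cdot\nabla\varphi=0$ for $\varphi\in C^\infty(\overline\Omega)$ immediately gives $\dyw(\sigma\mathbf u)=0$ in $\mathcal D'(\mathbb R^2)$ with no boundary term whatsoever. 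One can then test the weak continuity equation directly with $\log S_\delta(\sigma)\in C^\infty(\overline\Omega)$ (the lower bound $\varrho\ge 1/n_0$ in $\Omega$ keeps $S_\delta(\sigma)$ bounded away from $0$ on $\overline\Omega$), and a single application of Friedrichs' lemma plus $S_\delta(\sigma)\to\sigma$ in $L^2$ finishes the proof. The zero-extension trick buys the paper a shorter argument that never needs the H\"older embedding or any boundary-layer computation; your approach, while longer, is the textbook DiPerna--Lions renormalization and would transfer more readily to situations where the test class in the weak formulation does not already include all of $C^\infty(\overline\Omega)$.
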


\begin{proof}[Proof of lemma \ref{stwnier}]
Let $\sigma \in L^\infty(\mathbb R ^2)$ be an extension of $\varrho$ to whole $\mathbb R ^2$ such that $\sigma = 0 \text{ in } \mathbb R^2 \setminus \Omega $ a. e. and let $\mathbf{u} \in W^{1,q}(\mathbb R ^2)$ be any extension of $\ve$ to whole $\mathbb R ^2$. Then, the continuity equation implies that 
\begin{equation} 
\langle \dyw(\sigma \mathbf u ), \varphi \rangle = - \int_{\mathbb R^n} \sigma \mathbf u \cdot \nabla \varphi \dd x = - \intO \varrho \ve \cdot \nabla \varphi \dd x = 0 
\label{uDs}
\end{equation} 
for any $\varphi \in \mathcal D (\mathbb R ^2)$. Therefore $\mathbf u \cdot \nabla \sigma = - \sigma \dyw {\mathbf u}$, in particular $\mathbf u \cdot \nabla \sigma$ is a function that belongs to $L^p(\mathbb R ^2)$, $1 \leq p < \infty$ and 
\begin{equation} 
S_\delta(\mathbf u \cdot \nabla \sigma) \to \mathbf u \cdot \nabla \sigma \quad \text{in } L^p.  
\end{equation} 
Friedrichs lemma implies now that also 
\begin{equation} 
 \mathbf u \cdot \nabla S_\delta(\sigma) \to \mathbf u \cdot \nabla \sigma \quad \text{in } L^p
\end{equation}
(in particular the family $\mathbf u \cdot \nabla S_\delta(\sigma)$ is bounded in $L^p$). 
As $\Omega$ is a regular bounded domain and $\sigma \geq {1 \over n_0}$ in $\Omega$, estimate ${1 \over S_\delta (\sigma)} < M$ holds for any $\delta$ with some $M >0$, which implies in particular that we may test the continuity equation with $\log S_\delta (\sigma)$ obtaining 
\begin{equation} 
0 = \int_\Omega \varrho \ve \cdot \nabla \log S_\delta (\sigma) \dd x = \intO {\sigma \mathbf u \cdot \nabla S_\delta (\sigma) \over S_\delta (\sigma)} \dd x . 
\end{equation}
We have
\begin{equation}
\left| \intO \mathbf u \cdot \nabla S_\delta(\sigma) \dd x - \intO {\sigma \mathbf u \cdot \nabla S_\delta(\sigma) \over S_\delta(\sigma)} \dd x \right|  \leq M \|S_\delta(\sigma) - \sigma \|_2 \| \mathbf u \cdot \nabla S_\delta(\sigma)  \|_2 \to 0   
\label{fried2}
\end{equation} 
as $\delta \to 0^+$, therefore 
\begin{equation} 
 0 = \lim_{\delta \to 0^+} \intO {\sigma \mathbf u \cdot \nabla S_\delta (\sigma) \over S_\delta (\sigma)} \dd x = \lim_{\delta \to 0^+}  \intO \mathbf u \cdot \nabla S_\delta(\sigma) \dd x = - \intO \sigma \dyw {\mathbf u}
\end{equation}

\end{proof} 

Equipped with Lemmata \ref{nierG} and \ref{stwnier}, we may now use the Minty trick to identify $\overline{P(\varrho)}$ as $\pi(\varrho)$. 
\begin{stw}
$\overline{P(\varrho)} = \pi(\varrho)$ almost everywhere in $\Omega$. In fact, if $\pi'$ is strictly positive, $\varrho_{\varepsilon_n} \to \varrho$ strongly in $L^p(\Omega)$ for any $1 \leq p < \infty$. 
\end{stw}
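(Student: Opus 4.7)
The plan is to apply the classical Minty monotonicity trick. Observe first that $P'(\varrho) = \pi'(\varrho) K(\varrho) \geq 0$, so $P$ is non-decreasing and consequently
$$\bigl(P(\varrho_{\varepsilon_n}) - P(\varrho)\bigr)\bigl(\varrho_{\varepsilon_n} - \varrho\bigr) \geq 0 \quad \text{a.e.\ in } \Omega.$$
Since $\varrho \in L^\infty(\Omega)$ by Corollary \ref{odcieciegr}, $P(\varrho) \in L^\infty(\Omega)$ serves as a fixed test against the weak-$*$ convergent sequences $\varrho_{\varepsilon_n} \overset{*}\slabo \varrho$ and $P(\varrho_{\varepsilon_n}) \overset{*}\slabo \overline{P(\varrho)}$. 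Integrating the above inequality and passing to the limit in the cross terms would yield
$$\intO \overline{P(\varrho)\,\varrho}\,\dd x \geq \intO \overline{P(\varrho)}\,\varrho\,\dd x.$$

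For the reverse inequality I would combine Lemma \ref{nierG} with Lemma \ref{stwnier}: from $\intO \varrho\,\dyw\ve\,\dd x = 0$ together with the definition of $\overline{G}$,
$$\intO \overline{P(\varrho)\,\varrho}\,\dd x \leq \intO \overline{G}\,\varrho\,\dd x = \intO \overline{P(\varrho)}\,\varrho\,\dd x.$$
The two integrals therefore coincide, and the non-negative quantity $(P(\varrho_{\varepsilon_n}) - P(\varrho))(\varrho_{\varepsilon_n} - \varrho)$ converges to $0$ in $L^1(\Omega)$, hence almost everywhere along a subsequence.

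The remaining step is pointwise identification. For a.e.\ $x$, any accumulation point $\tilde\varrho(x) \in [\tfrac{1}{n_2}, m_2]$ of the bounded sequence $\varrho_{\varepsilon_n}(x)$ must satisfy $(P(\tilde\varrho(x)) - P(\varrho(x)))(\tilde\varrho(x) - \varrho(x)) = 0$, and by continuity of $P$ this forces $P(\tilde\varrho(x)) = P(\varrho(x))$ in either case. Since every accumulation point of $P(\varrho_{\varepsilon_n}(x))$ coincides with $P(\varrho(x))$, the whole sequence of values converges and $P(\varrho_{\varepsilon_n}) \to P(\varrho)$ a.e.; dominated convergence then upgrades this to strong $L^p$ convergence for every $1 \leq p < \infty$, giving $\overline{P(\varrho)} = P(\varrho)$. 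Finally, Corollary \ref{odcieciegr} ensures $\varrho \in [\tfrac{1}{n_0}, m_0] \subset [\tfrac{1}{n_1}, m_1]$, so $K \equiv 1$ on the integration path in \eqref{aprp} and $P(\varrho) = \int_{\varrho_0}^\varrho \pi'(t)\,\dd t = \pi(\varrho)$.

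For the last clause, if $\pi' > 0$ then $P' = \pi' K > 0$ on the open interval $(\tfrac{1}{n_2}, m_2)$, so $P$ is strictly increasing on $[\tfrac{1}{n_2}, m_2]$; then $P(\tilde\varrho(x)) = P(\varrho(x))$ forces $\tilde\varrho(x) = \varrho(x)$, giving $\varrho_{\varepsilon_n} \to \varrho$ a.e.\ and, by dominated convergence, strongly in every $L^p(\Omega)$. The main subtlety is the derivation of the Minty upper bound via Lemma \ref{nierG}, which depends crucially on the strong $L^2$ convergence $G_{\varepsilon_n} \to \overline G$ from Lemma \ref{zbieznoscg} and on $\intO \varrho\,\dyw\ve\,\dd x = 0$, established via the Friedrichs commutator lemma in the proof of Lemma \ref{stwnier}; once these two ingredients are in place, the monotonicity argument closes the proof routinely.
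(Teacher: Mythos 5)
Your argument is correct and reaches the same conclusion, but the implementation of the monotonicity trick differs from the paper's. The paper runs the classical Minty scheme: it inserts perturbed test functions $\varrho \pm \alpha\phi$, passes to the limit in
\begin{equation*}
\intO (P(\varrho_{\varepsilon_n}) - P(\varrho \pm \alpha \phi))(\varrho_{\varepsilon_n} - (\varrho \pm \alpha \phi)) \dd x \geq 0,
\end{equation*}
combines Lemmata \ref{nierG} and \ref{stwnier} to cancel the defect $\intO \overline{P(\varrho)\varrho}\dd x - \intO \overline{P(\varrho)}\varrho\dd x$, and then sends $\alpha \to 0^+$ and invokes du Bois--Reymond to get $\overline{P(\varrho)} = P(\varrho)$. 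You instead observe that the same two lemmata together with the opposite inequality (from integrating $(P(\varrho_{\varepsilon_n}) - P(\varrho))(\varrho_{\varepsilon_n} - \varrho) \geq 0$ against $1$ and passing to the limit) force the defect to vanish, so the non-negative integrand goes to $0$ in $L^1$; you then extract an a.e.\ convergent subsequence and argue pointwise on accumulation points to get $P(\varrho_{\varepsilon_n}) \to P(\varrho)$ a.e., upgrading to strong $L^p$ by boundedness. What your route buys is that it avoids perturbation test functions and du Bois--Reymond entirely, at the price of a slightly more delicate pointwise reduction. For the final clause the two proofs also diverge: the paper uses the lower bound $\pi' \geq \alpha > 0$ on the relevant compact interval to derive a coercivity estimate $\intO (\pi(\varrho_{\varepsilon_n}) - \pi(\varrho))(\varrho_{\varepsilon_n} - \varrho)\dd x \geq C\intO |\varrho_{\varepsilon_n} - \varrho|^2\dd x$ with l.\,h.\,s.\ tending to $0$, giving strong $L^2$ convergence of the extracted sequence directly; you instead use strict monotonicity of $P$ together with your a.e.\ accumulation-point argument. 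Both work. Two small imprecisions worth noting: the a.e.\ convergence only holds along a further subsequence (this is harmless, since the weak-$*$ limit of the full extracted sequence is already identified, and the standard sub-subsequence argument then upgrades to strong convergence of the extracted sequence, matching the paper's conclusion); and ``$K \equiv 1$ on the integration path'' is loose phrasing for $\varrho \leq \varrho_0$, where one needs $\pi_-' = 0$ above $\varrho_0$ rather than $K \equiv 1$ there, but the identity $P(\varrho) = \pi(\varrho)$ for $\varrho \in [\tfrac{1}{n_1}, m_1]$ is correct.
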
 
\begin{proof} 
Let $\alpha >0$, $\phi \in \mathcal{D}(\Omega)$. Due to monotonicity of $P$, 
\begin{equation} 
\intO (P(\varrho_{\varepsilon_n}) - P(\varrho \pm \alpha \phi))(\varrho_{\varepsilon_n} - (\varrho \pm \alpha \phi)) \dd x \geq 0. 
\end{equation} 
Passing to the limit with ${\varepsilon_n}$, 
\begin{equation} 
\intO \overline{P(\varrho) \varrho} - \overline{P(\varrho)} \varrho \mp \alpha \phi \overline{P(\varrho)} \pm \alpha \phi P(\varrho \pm \alpha \phi) \dd x \geq 0.
\end{equation} 
Lemmata \ref{nierG} and \ref{stwnier} imply 
\begin{equation} 
\intO \overline{P(\varrho) \varrho} \dd x \leq \intO \overline G \varrho \dd x = \intO \overline{P(\varrho)} \varrho \dd x 
\end{equation} 
and consequently 
\begin{equation} 
\pm \intO (P(\varrho \pm \alpha \phi) - \overline{P(\varrho)})\phi  \dd x \geq 0 .
\end{equation} 
Passing to the limit $\alpha \to 0^+$, 
\begin{equation} 
\intO (P(\varrho) - \overline{P(\varrho)})\phi  \dd x = 0 .
\end{equation} 
The du Bois-Reymond lemma implies $\overline{P(\varrho)} = P (\varrho)$ a. e. in $\Omega$. Proposition \ref{odcieciegr} implies now that $\overline{P(\varrho)} = \pi(\varrho)$. Now, if $\pi'$ is strictly positive, there exists $C>0$ such that 
\begin{equation} 
\intO (\pi(\varrho_{\varepsilon_n}) - \pi(\varrho))(\varrho_{\varepsilon_n} - \varrho) \dd x \geq C \intO |\varrho_{\varepsilon_n} - \varrho|^2 \dd x. 
\label{mono}
\end{equation} 
As $\pi(\varrho_{\varepsilon_n}) - \pi(\varrho) = \pi(\varrho_{\varepsilon_n}) - P(\varrho_{\varepsilon_n}) + P(\varrho_{\varepsilon_n}) - P(\varrho)$ and $\pi(\varrho_{\varepsilon_n}) - P(\varrho_{\varepsilon_n})$ is non-zero only on $\{\varrho > m_1\}\cup\{\varrho < \tfrac{1}{n_1}\}$, l.\,h.\,s.\;of \eqref{mono} converges to $0$, i.\,e.\;$\varrho_{\varepsilon_n} \to \varrho$ strongly in $L^2(\Omega)$. As $\varrho_{\varepsilon_n}$ is uniformly bounded in $L^\infty(\Omega)$, we have $\varrho_{\varepsilon_n} \to \varrho$ in any $L^p(\Omega)$, $p <\infty$. 
\end{proof} 

This completes the proof of Theorem \ref{thetw}. Moreover, due to Lemma \ref{ogr} and Proposition \ref{odciecie}, if $\pi'$ is strictly positive, the solution $(\rho, \ve)$ satisfies the assumptions of Theorem \ref{regularnosc}. \qed

\section*{Acknowledgements} 
The author wishes to express his gratitude to Piotr Bogus\l aw Mucha for suggesting the problem and stimulating conversations and also to Ewelina Zatorska for critical reading of the paper and her remarks. 

The work has been partly supported by the MN grant No. IdP2011/000661. 

 \bibliography{lasica}{}

\begin{thebibliography}{10}

\bibitem{adn}
S.~Agmon, A.~Douglis, and L.~Nirenberg.
\newblock Estimates near the boundary for solutions of elliptic partial
  differential equations satisfying general boundary conditions. {I}.
\newblock {\em Comm. Pure Appl. Math.}, 12:623--727, 1959.

\bibitem{bd}
D.~Bresch and B.~Desjardins.
\newblock On the existence of global weak solutions to the {N}avier-{S}tokes
  equations for viscous compressible and heat conducting fluids.
\newblock {\em J. Math. Pures Appl. (9)}, 87(1):57--90, 2007.

\bibitem{dz}
R.~Duan and Y.~Zhao.
\newblock A note on the non-formation of vacuum states for compressible
  {N}avier-{S}tokes equations.
\newblock {\em J. Math. Anal. Appl.}, 311(2):744--754, 2005.

\bibitem{fnp}
E.~Feireisl, A.~Novotn{\'y}, and H.~Petzeltov{\'a}.
\newblock On the existence of globally defined weak solutions to the
  {N}avier-{S}tokes equations.
\newblock {\em J. Math. Fluid Mech.}, 3(4):358--392, 2001.

\bibitem{fprs}
E.~Feireisl, H.~Petzeltov{\'a}, E.~Rocca, and G.~Schimperna.
\newblock Analysis of a phase-field model for two-phase compressible fluids.
\newblock {\em Math. Models Methods Appl. Sci.}, 20(7):1129--1160, 2010.

\bibitem{fsw}
J.~Frehse, M.~Steinhauer, and W.~Weigant.
\newblock The {D}irichlet problem for steady viscous compressible flow in three
  dimensions.
\newblock {\em J. Math. Pures Appl. (9)}, 97(2):85--97, 2012.

\bibitem{failure}
D.~Hoff and D.~Serre.
\newblock The failure of continuous dependence on initial data for the
  {N}avier-{S}tokes equations of compressible flow.
\newblock {\em SIAM J. Appl. Math.}, 51(4):887--898, 1991.

\bibitem{hs}
D.~Hoff and J.~Smoller.
\newblock Non-formation of vacuum states for compressible {N}avier-{S}tokes
  equations.
\newblock {\em Comm. Math. Phys.}, 216(2):255--276, 2001.

\bibitem{jz}
S.~Jiang and C.~Zhou.
\newblock Existence of weak solutions to the three-dimensional steady
  compressible {N}avier-{S}tokes equations.
\newblock {\em Ann. Inst. H. Poincar\'e Anal. Non Lin\'eaire}, 28(4):485--498,
  2011.

\bibitem{ks}
A.~V. Kazhikhov and V.~V. Shelukhin.
\newblock Unique global solution with respect to time of initial-boundary value
  problems for one-dimensional equations of a viscous gas.
\newblock {\em Prikl. Mat. Meh.}, 41(2):282--291, 1977.

\bibitem{kof}
A.~Kufner, O.~John, and S.~Fu{\v{c}}{\'{\i}}k.
\newblock {\em Function spaces}.
\newblock Noordhoff International Publishing, Leyden, 1977.

\bibitem{lw}
M.~Lewicka and S.~J. Watson.
\newblock Temporal asymptotics for the {$p$}'th power {N}ewtonian fluid in one
  space dimension.
\newblock {\em Z. Angew. Math. Phys.}, 54(4):633--651, 2003.

\bibitem{lions}
P.-L. Lions.
\newblock {\em Mathematical topics in fluid mechanics. {V}ol. 2. Compressible
  models}, volume~10 of {\em Oxford Lecture Series in Mathematics and its
  Applications}.
\newblock Oxford University Press, Oxford, 1998.

\bibitem{mp}
P.~B. Mucha and M.~Pokorn{\'y}.
\newblock On a new approach to the issue of existence and regularity for the
  steady compressible {N}avier-{S}tokes equations.
\newblock {\em Nonlinearity}, 19(8):1747--1768, 2006.

\bibitem{mp2}
P.~B. Mucha and M.~Pokorn{\'y}.
\newblock 3{D} steady compressible {N}avier-{S}tokes equations.
\newblock {\em Discrete Contin. Dyn. Syst. Ser. S}, 1(1):151--163, 2008.

\bibitem{mr}
P.~B. Mucha and R.~Rautmann.
\newblock Convergence of {R}othe's scheme for the {N}avier-{S}tokes equations
  with slip conditions in 2{D} domains.
\newblock {\em ZAMM Z. Angew. Math. Mech.}, 86(9):691--701, 2006.

\bibitem{np3}
A.~Novotn{\'y} and M.~Pokorn{\'y}.
\newblock Steady compressible {N}avier-{S}tokes-{F}ourier system for monoatomic
  gas and its generalizations.
\newblock {\em J. Differential Equations}, 251(2):270--315, 2011.

\bibitem{np1}
A.~Novotn{\'y} and M.~Pokorn{\'y}.
\newblock Weak and variational solutions to steady equations for compressible
  heat conducting fluids.
\newblock {\em SIAM J. Math. Anal.}, 43(3):1158--1188, 2011.

\bibitem{np2}
A.~Novotn{\'y} and M.~Pokorn{\'y}.
\newblock Weak solutions for steady compressible {N}avier-{S}tokes-{F}ourier
  system in two space dimensions.
\newblock {\em Appl. Math.}, 56(1):137--160, 2011.

\bibitem{ns}
A.~Novotn{\'y} and I.~Stra{\v{s}}kraba.
\newblock {\em Introduction to the mathematical theory of compressible flow},
  volume~27 of {\em Oxford Lecture Series in Mathematics and its Applications}.
\newblock Oxford University Press, Oxford, 2004.

\bibitem{np4}
M.~Pokorn{\'y}.
\newblock On the steady solutions to a model of compressible heat conducting
  fluid in two space dimensions.
\newblock {\em J. Partial Differ. Equ.}, 24(4):334--350, 2011.

\bibitem{ss}
G.~Seregin and V.~{\v{S}}ver{\'a}k.
\newblock Navier-{S}tokes equations with lower bounds on the pressure.
\newblock {\em Arch. Ration. Mech. Anal.}, 163(1):65--86, 2002.

\bibitem{sun3d}
Y.~Sun, C.~Wang, and Z.~Zhang.
\newblock A {B}eale-{K}ato-{M}ajda blow-up criterion for the 3-{D} compressible
  {N}avier-{S}tokes equations.
\newblock {\em J. Math. Pures Appl. (9)}, 95(1):36--47, 2011.

\bibitem{sun2d}
Y.~Sun and Z.~Zhang.
\newblock A blow-up criterion of strong solutions to the 2{D} compressible
  {N}avier-{S}tokes equations.
\newblock {\em Sci. China Math.}, 54(1):105--116, 2011.

\bibitem{vk}
V.~A. Va{\u\i}gant and A.~V. Kazhikhov.
\newblock On the existence of global solutions of two-dimensional
  {N}avier-{S}tokes equations of a compressible viscous fluid.
\newblock {\em Sibirsk. Mat. Zh.}, 36(6):1283--1316, ii, 1995.

\bibitem{wat}
S.~J. Watson.
\newblock Unique global solvability for initial-boundary value problems in
  one-dimensional nonlinear thermoviscoelasticity.
\newblock {\em Arch. Ration. Mech. Anal.}, 153(1):1--37, 2000.

\bibitem{xy}
Z.~Xin and H.~Yuan.
\newblock Vacuum state for spherically symmetric solutions of the compressible
  {N}avier-{S}tokes equations.
\newblock {\em J. Hyperbolic Differ. Equ.}, 3(3):403--442, 2006.

\bibitem{semi}
E.~Zatorska.
\newblock Analysis of semidiscretization of the compressible {N}avier-{S}tokes
  equations.
\newblock {\em J. Math. Anal. Appl.}, 386(2):559--580, 2012.

\bibitem{ika}
E.~Zatorska.
\newblock On the flow of chemically reacting gaseous mixture.
\newblock {\em J. Differential Equations}, 253(12):3471--3500, 2012.

\end{thebibliography}
 \bibliographystyle{plain}
\end{document}